\newtheorem{thm}{Theorem}[section]
\newtheorem{lem}[thm]{Lemma}
\newtheorem{prop}[thm]{Proposition}
\newtheorem{cor}[thm]{Corollary}
\theoremstyle{definition}
\theoremstyle{definition}
\newtheorem{df}[thm]{Definition}
\theoremstyle{definition}
\newtheorem{rem}[thm]{Remark}
\newtheorem{nota}[thm]{Notation}
\theoremstyle{definition}
\newtheorem{exm}[thm]{Example}
\newcommand{\red}{\textcolor{red}}
\renewcommand{\phi}{\varphi}
\newcommand{\N}{\mathbb{N}}
\newcommand{\Z}{\mathbb{Z}}
\newcommand{\Q}{\mathbb{Q}}
\newcommand{\R}{\mathbb{R}}
\newcommand{\C}{\mathbb{C}}
\numberwithin{equation}{section}
\newcommand{\Aff}{\operatorname{Aff}}
\newcommand{\Tw}{\overline{T(A)}^w}
\newcommand{\cpc}{c.p.c.~map}
\newcommand{\hm}{homomorphism}
\newcommand{\dt}{\delta}
\newcommand{\ep}{\varepsilon}
\newcommand{\td}{\tilde}
\newcommand{\la}{\langle}
\newcommand{\ra}{\rangle}
\newcommand{\andeqn}{\,\,\,{\rm and}\,\,\,}
\newcommand{\rforal}{\,\,\,{\rm for\,\,\,all}\,\,\,}
\newcommand{\CA}{$C^*$-algebra}
\newcommand{\SCA}{$C^*$-subalgebra}
\newcommand{\af}{{\alpha}}
\newcommand{\bt}{{\beta}}
\newcommand{\diag}{{\rm diag}}
\newcommand{\wtd}{\widetilde}
\newcommand{\wilog}{without loss of generality}
\newcommand{\Wlog}{Without loss of generality}
\newcommand{\beq}{\begin{eqnarray}}
\newcommand{\eneq}{\end{eqnarray}}
\newcommand{\tforal}{\,\,\,\text{for\,\,\,all}\,\,\,}
\newcommand{\tand}{\,\,\,\text{and}\,\,\,}
\newcommand{\Her}{\mathrm{Her}}
\newcommand{\Cu}{\mathrm{Cu}}
\newcommand{\Qw}{\overline{QT(A)}^w}
\newcommand{\vo}{\varpi}
\newcommand{\Av}{l^\infty(A)/c_{\varpi}(A)}
\newcommand{\Avt}{l^\infty(A)/I_{_{\tau, \varpi}}}
\newcommand{\AvS}{l^\infty(A)/I_{_{S, \varpi}}}
\newcommand{\Nv}{l^\infty(N)/c_{\varpi}(N)}
\newcommand{\Nuv}{l^\infty(N)/I_{_{\varpi,\tau}}(N)}
\newcommand{\Ccq}{l^\infty(A)/I_{_{T(A),\varpi}}}
\newcommand{\Cq}{l^\infty(A)/I_{_{\overline{T(A)}^w,\varpi}}}
\newcommand{\Cqc}{(l^\infty(A)\cap A')/I_{_{\overline{T(A)}^w, \varpi}}}
\newcommand{\Cqcc}{(l^\infty(A)\cap A')/I_{_{T(A), \varpi}}}
\title{Tracial oscillation zero and ${\cal Z}$-stability}
\author{Huaxin Lin}
\date{ }
\begin{document}

\maketitle

\begin{abstract}
Let $A$ be a (not necessarily unital) separable  non-elementary
simple amenable \CA\, whose 
tracial  basis may not have finite covering dimension and may not be compact
but 
satisfies  {{a}} certain condition (C). 
We show that $A$ is ${\cal Z}$-stable if and only if $A$ has strict comparison 
for positive elements. 
Extremal boundaries of simplexes  which satisfy condition (C) 
may contain
countable  disjoint unions of $n$-dimensional cubes ($n\in \N$) as a subset.
\end{abstract}

\section{Introduction}

The recent progress in the classification of simple \CA s are for separable simple 
amenable \CA s which tensorially absorb the Jiang-Su algebra. These \CA s are 
called ${\cal Z}$-stable \CA s. 
In fact, almost all classification results in the Elliott program, the program of 
classification of separable amenable \CA s,  are for ${\cal Z}$-stable \CA s 
(\cite{Ell93}, \cite{Ellicm}, \cite{Rro2}, \cite{P},  \cite{KP}, \cite{EG},  \cite{EGL},  \cite{Linduke}, \cite{Lininv},
\cite{GLNI}, \cite{EGLN}, \cite{TWW}, to name
only  a few).
The Jiang-Su algebra ${\cal Z}$ is an infinite dimensional 
simple \CA\, with a unique tracial state whose ordered $K$-theory 
is exactly the same as that of the complex field (see \cite{JS}). 
If $A$ is a separable simple \CA\, with weakly unperforated $K_0(A),$ 
then $A$ and $A\otimes {\cal Z}$ have exactly the same Elliott invariant
(see \cite{GJS}).
In other words, the current restriction to ${\cal Z}$-stable \CA s in the Elliott program 
is not the limitation of methods but rather a discovery.

The fact that regularity conditions are required in the study of the classification program
has a long history. It was in the horizon since B. Blackadar's strict comparison was introduced in 1980's (see \cite{Bcomp}).
Perhaps, the first deep result in regularity conditions for simple \CA s
is the introduction of the notion of slow dimension growth in \cite{DNNP}. 
Stable rank (\cite{Rff}), real rank (\cite{BP}),  tracial rank (\cite{Lintrk}),  decomposition rank
(\cite{KW}), as well as nuclear dimension (\cite{WZ})
have been introduced to the \CA\, theory. They may all be regarded as regularity 
conditions. 

Toms-Winter's conjecture states (for stably finite case)
 that for a separable non-elementary stably finite simple amenable \CA\, $A,$ 
the following are equivalent:

(1) $A$ has strict comparison,

(2)  $A\cong A\otimes {\cal Z},$

(3) $A$ has finite nuclear dimension.

By now, the equivalence of (2) and (3) has been established (see \cite{CE}, \cite{CETWW}, 
\cite{Winter-Z-stable-02}, \cite{T-0-Z}, \cite{MS2}).  
The implication (2) $\Rightarrow$ (1) is established earlier (\cite{Rrzstable}).

The remaining direction is the implication (1) $\Rightarrow$ (2) (or (3)). 
A breakthrough was made by the celebrated work of Matui and Sato (\cite{MS})  which shows, 
among other directions of Toms-Winter's conjecture, that, if $A$ is a unital stably finite separable non-elementary 
amenable simple \CA\, with finitely many extremal traces and has strict comparison, 
then $A\cong A\otimes {\cal Z}.$  The newly invented methods was later developed 
further (see \cite{KR}, \cite{S-2} and \cite{TWW-2}) to show that Toms-Winter's conjecture holds for 
those unital separable simple amenable \CA s $A$ whose
 tracial state space $T(A)$ is a Bauer simplex with finite dimensional  extremal boundary.
Wei Zhang (\cite{Zw}) showed that Toms-Winter conjecture
holds for unital separable amenable simple \CA s whose  extremal tracial states are  
finite-dimensional with  
tightness property (but not closed).
 More recently in \cite{CETW}, it is shown 
that Toms-Winter's conjecture holds under the assumption of uniform  property $\Gamma$
(see also \cite{Lingamma}).
%

In this paper, we show that the conjecture holds  under the additional 
conditions  that $A$ has stable rank one
and 
the tracial cone $\wtd{T}(A)$  (the cone of densely defined traces)  has a basis 
with condition (C) (see Definition \ref{Dc1} and Theorem \ref{TM-1}).  
If one of the basis has countable extremal boundary, then the condition 
that $A$  has stable rank one can be dropped  (see Corollary  \ref{CCM-1}). 
One example of Choquet simplex $T$ which satisfies the condition (C)
is the simplex whose extremal boundary $\partial_e(T)$ is the
one point compactification of countable  disjoint union of finite (but not bounded)
dimensional cubes (or any metric spaces with finite covering dimension).  So $\partial_e(T)$ is not finite dimensional.
There are also examples of Choquet simplexes which satisfy condition (C) but 
are not Bauer simplexes  and their  extremal boundaries  are not finite dimensional (see examples in \ref{exm-1}). 
It should be noted that extremal boundaries $\partial_e(T(A))$
of $T(A)$ in \cite{MS},  \cite{KR}, \cite{S-2} and \cite{TWW-2}), as well as in \cite{Zw} are all finite dimensional.
Except those studied in \cite{Zw}, tracial state spaces $T(A)$ studied in these papers
(\cite{MS}, \cite{KR}, \cite{S-2} and \cite{TWW-2})  all 
satisfy condition (C).

%
%

If $A$ is not unital but $A\otimes {\cal K}$ has a nonzero projection $p,$ then 
the previous results 
apply to \CA s $B=p(A\otimes {\cal K})p$ which  are
unital. Since $A$ is assumed to be separable and simple, by Brown's stable isomorphism 
theorem (see \cite{Br}), one obtains $B\otimes {\cal K}\cong A\otimes {\cal K}.$ Therefore the
results mentioned in \cite{MS}, \cite{KR}, \cite{S-2} and \cite{TWW-2} can be applied to the case that $A$ is not unital 
but $K_0(A)_+\not=\{0\}.$  
However, this  stabilization method fails when $A$ is a
stably projectionless \CA.  Theorem \ref{TM-1} in this paper  includes  the case that $A$  is a  stably projectionless  separable 
simple amenable \CA\,  whose   $\wtd{T}(A)$ has  a basis which satisfies condition (C)
(see \ref{Dc1} and also \ref{Pextrb}).

This paper uses the notion of  tracial approximate oscillation zero. 
Let us, for simplicity,  assume that $A$ is a unital separable simple \CA. 
Denote by  $\Gamma: {\rm Cu}(A)\to {\rm LAff}_+(T(A))$ (the set of strictly positive  lower-semicontinuous affine functions
on the tracial state space $T(A)$)  the canonical map defined by $\la a\ra \mapsto \widehat{\la a\ra}=d_\tau(a)$ (the dimension function of $a$).
In general $\widehat{\la a\ra}$ is not continuous. Let $\omega(a)$ be the oscillation of the 
lower-semicontinuous function $\widehat{\la a\ra}.$ 
Roughly speaking, a separable simple \CA\, has tracial approximate oscillation zero, if 
every positive element in $A$ can be approximated (tracially)
by 
those elements whose tracial oscillations tend to zero.   This notion will be discussed in detail in \cite{FLosc}.
Among other things, we show that if $A$ is a (not necessarily unital) separable simple \CA\, which has 
strict comparison, then $A$ has tracial approximate oscillation zero if and only if $A$ has stable rank one 
(and therefore, by \cite{APRT}, the canonical map $\Gamma$ is surjective). 
In the case that $\partial_e(T(A))$ has countably many points, $A$ always has 
tracial approximate oscillation zero. 
Perhaps, the notion of tracial oscillation zero may play further role in the study of simple \CA s.

The paper is organized as follows. 
Section 2 serves  as the  preliminary of the paper, where, 
among other items, we provide examples of non-Bauer simplexes 
which satisfy condition (C).  Some of them are not finite dimensional.
%
We also show 
that, for non-unital  separable simple \CA s, the condition that 
one of its bases   
for the cone of traces satisfies condition (C)
does not depend on the choice of the basis.  
In section 3 we recall some variations of results related to Matui-Sato's paper \cite{MS} and we mix them with the notion of 
tracial approximate oscillation zero. In section 4, we present some technical results 
which allow us to add maps 
approximately 
orthogonally,  using the notion of tracial 
oscillation zero. 
The last section contains the proof of the main result of the paper. 

\vspace{0.2in}

{\bf Acknowlegements}
  This research
    is partially supported by a NSF grant (DMS 1954600)
and  the Research Center for Operator Algebras in East China Normal University
which is partially supported by Shanghai Key Laboratory of PMMP, Science and Technology Commission of Shanghai Municipality (STCSM), grant \#13dz2260400. 
The author would like to thank the referee for many valuable suggestions.

\section{Preliminary}

\begin{df}\label{D1}
Let $A$ be a \CA\,
and
$S\subset A$  a subset of $A.$
Denote by
${\rm Her}(S)$
the hereditary $C^*$-subalgebra of $A$ generated by $S.$
Denote by $A^{\bf 1}$ the closed unit ball of $A,$  and 
by $A_+$ the set of all positive elements in $A.$
Put $A_+^{\bf 1}:=A_+\cap A^{\bf 1}.$
Denote by $\wtd A$ the minimal unitization of $A.$
 {{Let  ${\rm Ped}(A)$ denote
the Pedersen ideal of $A$ and ${\rm Ped}(A)_+:= {\rm Ped}(A)\cap A_+$.  Denote by $T(A)$ the tracial state space of $A.$}}
\end{df}

\begin{df}
Let $A$ and $B$ be \CA s and 
$\phi: A\rightarrow B$ be a  linear map.
The map $\phi$ is said to be positive if $\phi(A_+)\subset B_+.$  
The map $\phi$ is said to be completely positive contractive, abbreviated to c.p.c.,
if $\|\phi\|\leq 1$ and  
$\phi\otimes \mathrm{id}: A\otimes M_n\rightarrow B\otimes M_n$ is 
positive for all $n\in\mathbb{N}.$ 
A c.p.c.~map $\phi: A\to B$ is called order zero, if for any $x,y\in A_+,$
$xy=0$ implies $\phi(x)\phi(y)=0$ (see  Definition 2.3 of \cite{WZ}).
If $ab=ba=0,$ we also write $a\perp b.$

In what follows, $\{e_{i,j}\}_{i,j=1}^n$ (or just $\{e_{i,j}\},$ if there is no confusion) stands for  a system of matrix units for $M_n$ and $\iota\in C_0((0,1])$ 
is the identity function on $(0,1],$  i.e., $\iota(t)=t$ for all $t\in (0,1].$
\end{df}

\begin{nota}
Throughout the  paper,
the set of all positive integers is denoted by $\N.$ 
{{The set of all compact operators on a separable 
infinite-dimensional Hilbert 
space is denoted by ${\cal K}.$}} 

Let  $A$ 
be a normed space and ${\cal F}\subset A$ be a subset. For any  $\epsilon>0$ and 
$a,b\in A,$
we  write $a\approx_{\epsilon}b$ if
$\|a-b\|< \epsilon$.
We write $a\in_\ep{\cal F}$ if there is $x\in{\cal F}$ such that
$a\approx_\ep x.$


\end{nota}

\begin{nota}\label{Nfg}
Let $\epsilon>0.$ Define a  continuous function
$f_{\epsilon}
: {{[0,+\infty)}}
\rightarrow [0,1]$ by
\beq\nonumber
f_{\epsilon}(t)=
\begin{cases}
0  &t\in {{[0,\epsilon/2]}},\\
1 &t\in [\epsilon,\infty),\\
\mathrm{linear } &{t\in[\epsilon/2, \epsilon].}
\end{cases}
\eneq

\end{nota}

\begin{df}\label{Dcuntz}
Let $A$ be a \CA\, and 
$a,\, b\in (A\otimes {\cal K})_+.$ 
We 
write $a \lesssim b$ if there is 
$x_i\in A\otimes {\cal K}$  for all $i\in \N$ 
such that
$\lim_{i\rightarrow\infty}\|a-x_i^*bx_i\|=0$.
We write $a \sim b$ if $a \lesssim b$ and $b \lesssim a$ both  hold.
The Cuntz relation $\sim$ is an equivalence relation.
Set $\Cu(A)=(A\otimes {\cal K})_+/\sim.$
Let $[a]$ denote the equivalence class of $a$. 
We write $[a]\leq [b] $ if $a \lesssim  b$.
\end{df}

\begin{df}\label{Dqtr}
{{Let $A$ be a $\sigma$-unital \CA. 
A densely  defined 2-quasi-trace  is a 2-quasi-trace defined on ${\rm Ped}(A)$ (see  Definition II.1.1 of \cite{BH}). 
Denote by ${\widetilde{QT}}(A)$ the set of densely defined quasitraces 
on 
$A\otimes {\cal K}.$  
%
Denote by ${\widetilde{T}}(A)$ the set of densely defined traces 
on 
$A\otimes {\cal K}.$  
 In what follows we will identify 
$A$ with $A\otimes e_{1,1},$ whenever it is convenient. 
Let $\tau\in {\widetilde{QT}}(A).$  Note that $\tau(a)\not=\infty$ for any $a\in {\rm Ped}(A)_+\setminus \{0\}.$

We endow ${\widetilde{QT}}(A)$ 
{{with}} the topology  in which a net 
${{\{}}\tau_i{{\}}}$ 
 converges to $\tau$ if 
${{\{}}\tau_i(a){{\}}}$ 
 converges to $\tau(a)$ for all $a\in 
 {\rm Ped}(A)$ 
 (see also (4.1) on page 985 of \cite{ERS}).
 A  convex subset $S\subset {\wtd{QT}}(A)\setminus \{0\}$ is a basis
 for ${\wtd{QT}}(A),$ if for any $t\in {\wtd{QT}}(A)\setminus \{0\},$ there exists a unique  pair 
 $r\in \R_+$ (non-negative real numbers) and 
 $s\in S$ such that $r\cdot s=t.$ 
 Denote by $QT(A)$ the set of normalized 2-quasitraces of $A$ ($\|\tau\|=1$).
 Let $e\in {\rm Ped}(A)_+\setminus \{0\}$ be a full element of $A.$ 
 Then $S_e=\{\tau\in {\wtd{QT}}(A): \tau(e)=1\}$ is a Choquet simplex and is a basis 
 for the cone ${\wtd{QT}}(A)$ (see Proposition 3.4 of \cite{TT}).

}}

Note that, for each $a\in ({{A}}
\otimes {\cal K})_+$ and $\ep>0,$ $f_\ep(a)\in {\rm Ped}(A\otimes {\cal K})_+.$ 
Define 
\beq
\widehat{[a]}(\tau):=d_\tau(a)=\lim_{\ep\to 0}\tau(f_\ep(a))\rforal \tau\in {\widetilde{QT}}(A).
\eneq
Except in subsection \ref{sub2}, we will assume that all 2-quasitraces of a separable \CA\, $A$ 
in this paper  
are in fact traces for the convenience. This is the case if $A$ is exact (by \cite{Haagtrace}).

\end{df}

\begin{df}
Let $A$ be a simple \CA\,with $\wtd{QT}(A)\setminus \{0\}\not=\emptyset.$
Then
$A$
is said to have (Blackadar's) strict comparison, if, for any $a, b\in (A\otimes {\cal K})_+,$ 
condition 
\beq
d_\tau(a)<d_\tau(b)\rforal \tau\in {\widetilde{QT}}(A)\setminus \{0\}
\eneq
implies that 
$a\lesssim b.$ 
\end{df}

\begin{df}\label{DGamma}
Let $A$ be a \CA\, with ${\wtd{QT}}(A)\setminus\{0\}\not=\emptyset.$
Let $S\subset {\wtd{QT}}(A)$ be a convex subset. 
Let $\Aff(\wtd{QT}(A))$ be the set of 
real affine  continuous functions {\red{$f$}} on $\wtd{QT}(A)$ such that
{\red{$f(0)=0.$}}
Set 
\beq
\Aff_+(S)&=&\{f|_S: f\in \Aff(\wtd{QT}(A)):  f(s)>0\,\,{\rm for}\,\,s\not=0 \}
\cup \{0\},\\
%
{\rm LAff}_+(S)&=&\{f: S\to [0, \infty]:  \exists \{f_n\}, \,\,f_n\nearrow f,\,\,  f_n\in \Aff_+(S)\}.
 \eneq
For a  simple \CA\, $A$ and   each $a\in (A\otimes {\cal K})_+,$ the function $\hat{a}(\tau)=\tau(a)$ ($\tau\in S$) 
is in general in ${\rm LAff}_+(S).$   If $a\in {\rm Ped}(A\otimes {\cal K})_+,$
then $\hat{a}\in \Aff_+(S).$
For 
$\widehat{[a]}(\tau)=d_\tau(a)$ defined above,   we have 
$\widehat{[a]}\in {\rm LAff}_+({\wtd{QT}}(A)).$

We write $\Gamma: \Cu(A)\to {\rm LAff}_+({\wtd{QT}}(A))$ for 
the canonical map defined by $\Gamma([a])(\tau)=\widehat{[a]}=d_\tau(a)$ 
for all $\tau\in {\wtd{QT}}(A).$

In the case that $A$ is  algebraically simple (i.e., $A$ is a simple \CA\,  and $A={\rm Ped}(A)$), 
$\Gamma$ also induces a canonical map 
$\Gamma_1: \Cu(A)\to {\rm LAff}_+(\Qw),$
where $\Qw$ is the weak*-closure of $QT(A).$  Since, in this case, 
$\R_+\cdot \Qw={\wtd{QT}}(A),$ the map $\Gamma$ is surjective if and only if $\Gamma_1$
is surjective.  We would like to point out that, in this case, $0\not\in \Qw$ (see Lemma 4.5 of \cite{eglnp}).
In the case that $A$ is stably finite and simple,
denote by $\Cu(A)_+$ the set of purely non-compact elements (see Proposition 6.4 of \cite{ERS}).
Suppose that $\Gamma$ is surjective.  Then  $\Gamma|_{\Cu(A)_+}$ is surjective as well  (see Theorem 7.12 \cite{FLosc}, for example).
%
%
%
\end{df}

\begin{df}\label{DNcu}
Let $l^\infty(A)$ be the \CA\ of bounded sequences of $A.$
{{Recall}} that 
$c_0(A):=\{\{a_n\}\in l^\infty(A): \lim_{n\to\infty}\|a_n\|=0\}$ 
is a (closed two-sided) ideal of $l^\infty(A).$ 
We view $A$ as a subalgebra of $l^\infty(A)$ 
via the canonical map $\iota: a\mapsto\{a,a,,...\}$ for all $a\in A.$
In what follows, we may identify $a$ with the constant sequence $\{a,a,...,\}$ in 
$l^\infty(A)$ without further warning.

Put $A'=\{x=\{x_n\}\in l^\infty(A):  \lim_{n\to\infty}\|x_na-ax_n\|=0\}.$ 
\end{df}

\begin{df}\label{Dc0}
Let $A$ be a $\sigma$-unital simple \CA,
%
$e\in A_+^{\bf 1}$ be a strictly positive element
and $e_n=f_{1/2n}(e),$ $n\in \N.$ Recall that (see Definition 2.5 of \cite{Lin91cs})
$A$ is said to have continuous scale if and only if, for any finite subset 
${\cal F}\subset A_+\setminus \{0\},$ there exists $n_0\in \N$ such that,
for any $n\ge n_0,$ 
$
e_m-e_n\lesssim b\rforal b\in {\cal F}.
$
\end{df}
%
%
\begin{df}
Let $A$ be a separable simple \CA\, with ${\rm Ped}(A)=A$ and $\wtd{QT}(A)\setminus \{0\}\not=\emptyset.$
Let $\tau\in {\wtd{QT}}(A)\setminus \{0\}.$
Define, for each $x\in A,$
\beq
\|x\|_{_{2,\tau}}=\tau(x^*x)^{1/2}.
\eneq
Let $S\subset {\wtd{QT}}(A)\setminus \{0\}$ be a compact subset. Define 
\beq
\|x\|_{_{2, S}}=\sup\{\tau(x^*x)^{1/2}:\tau\in S\}.
\eneq
Put $I_{S,\N}=\{\{x_n\}\in l^\infty(A): \lim_{n\to\infty}\|x\|_{_{2, S}}=0\}.$ 
\end{df}

\begin{df}\label{Dultrafiler}
Let $\varpi\in \bt(\N)\setminus \N$ be a free ultrafilter. 
Set
\beq
c_{0,\varpi}=\{\{x_n\}\in l^\infty(A): \lim_{n\to\omega}\|x_n\|=0\}.
\eneq
Let $S\subset {\wtd{QT}}(A)$ be a  compact subset.
Define 
\beq
I_{_{S, \varpi}}=\{\{x_n\}\in l^\infty(A): \lim_{n\to\varpi}\|x_n\|_{_{2, S}}=0\}.
\eneq
It is a (closed two-sided) ideal. 
In the case that $A={\rm Ped}(A),$ we usually consider 
$I_{_{\Qw, \varpi}}.$ If $A$ has continuous scale, 
we consider $I_{_{QT(A), \varpi}}.$  

Denote by $\Pi_\varpi: l^\infty(A)\to l^\infty(A)/_{I_{\Qw, \varpi}}$ the quotient map. 
If $\tau\in {\wtd{QT}}(A),$  for $x=\Pi_\varpi(\{x_n\}),$ define 
\beq
\tau_\varpi(x)=\lim_{n\to\varpi}\tau(x_n).
\eneq
We may view $\tau_\varpi$ as a trace on $l^\infty(A)/_{I_{\Qw, \varpi}}.$

If $S\subset QT(A)$ is a compact subset,  denote by $\pi_S: l^\infty(A)\to l^\infty(A)/I_{_{S, \varpi}}$  
and by $\Phi_S: \Cq\to l^\infty/I_{_{S, \varpi}}$ the quotient maps, respectively.
Note that $S$ could be a single point $\tau.$
For convenience, abusing the notation, 
we may also write $A'$ for $\pi_S(A').$ 
\end{df}


\subsection{Extremal Boundaries} 

Throughout this paper, by a Choquet simplex, we mean a metrizable  (compact) Choquet simplex. 

\begin{df}\label{Dc1}
Let $T$ be a Choquet simplex and $X=\partial_e(T)$ be the extremal boundary.
Recall that $\partial_e(T)$ is a $G_\dt$-set (see Cor. I.4.4 of \cite{Alf}).
By the Choquet theorem, for each $t\in T,$ 
there is a  unique Borel probability measure $\mu_t$ on $X$ such that $t$ is the barycenter of   $\mu_t,$ i.e., 
\beq\label{Dc1-0}
f(t)=\int_X f(x) d\mu_t \rforal f\in \Aff(T).
\eneq
Let $S\subset \partial_e(T)=X$ be a Borel subset. Denote by
$M_S=\{\mu_t: t\in T\andeqn \mu_t|_{X\setminus S}=0\}.$ In other words,
$M_S$ is the set of those Borel probability measures on $X$ concentrated on $S.$
In what follows we may identify $t$ with $\mu_t$ and $M_S$ with 
the subset of $T$ whose associated extremal boundary measure concentrated on $S.$
Denote by ${\rm conv}(S)$ the convex hull of $S.$

We say $T$ satisfies condition (C), if $T$ has the following properties:

(1) $\partial_e(T)=X=\cup_{n=1}^\infty X_n,$ where each $X_n$ is a  compact subset of $X$ with finite covering 
dimension  and $X_i\cap X_j=\emptyset,$ if $i\not=j,$ and $i,j\ge 2.$

(2) for any $k, m\ge 1,$  $\overline{{\rm conv}(\cup_{j=1}^m X_{j+k}) }=M_{\cup_{j=1}^m X_{j+k}}$ and 
$\overline{{\rm conv}(X\setminus \cup_{j=1}^m X_{j+k})}=M_{X\setminus \cup_{j=1}^m X_{j+k}}.$

\vspace{0.1in}

\end{df}

In (1) above,  we note that  $\{{\rm dim}X_n\}$ may not be bounded.
%
By an elementary measure theory argument, if $S$ is a Borel subset of $\partial_e(T),$
then one always has 
${\rm conv}(S)\subset M_S\subset \overline{{\rm conv}(S)}.$

\begin{prop}\label{P-L}
If $S$ is  a compact subset of $\partial_e(T),$ 
then $\overline{{\rm conv}(S)}=M_S.$ 
\end{prop}

\begin{proof}
By Corollary 11.19 of \cite{kG}, 
$\overline{{\rm conv}(S)}$ is a closed face of $T$ and $\partial_e(\overline{{\rm conv}(S)})=S.$
By the Choquet-Bisop-de Leeuw theorem (cf. Theorem I.4.8  of \cite{Alf}),
for each $x\in \overline{{\rm conv}(S)},$ there exists a measure $m_x$ on $S$ 
such that, 
for any  $g\in \Aff(\overline{{\rm conv}(S)}),$
$g(x)=\int_S g(s) dm_x.$  Define ${\bar m}_x$ on $\partial_e(T)$ 
by ${\bar m}_x(B)=m_x(B\cap S)$ for all Borel subsets  $B$ of $\partial_e(T).$ 
Let $\mu_x$ be as defined in \eqref{Dc1-0}.
Then, by the Choquet theorem, for any $f\in \Aff(T),$ 
\beq
f(x)=\int_{\partial_e(T)}f(t) d\mu_x=\int_{\partial_e(T)} f(t) d{\bar m}_s=\int_S f(s) dm_s.
\eneq
This implies that   $\overline{{\rm conv}(S)}=M_S.$ 
\end{proof}


\begin{rem}\label{Rmark1}
 (i) By Proposition \ref{P-L},  in the definition of condition (C), one always has 
$\overline{{\rm conv}(\cup_{j=1}^m X_{j+k}) }=M_{\cup_{j=1}^m X_{j+k}}.$

(ii) Note also that, if $S_1, S_2\subset X$ is a pair of disjoint subsets, then 
$M_{S_1}\cap M_{S_2}=\emptyset.$   In particular, condition (2) 
implies that  
\beq
\overline{{\rm conv}(\cup_{j=1}^m X_{j+k}) }\,\cap \, \overline{{\rm conv}(X\setminus \cup_{j=1}^m X_{j+k})}=
\emptyset.
\eneq
It might be helpful to note that $k\ge 1$ in (2) above. 

(iii) Let $T$ be a Choquet simplex. If $T$ is a Bauer simple, i.e., $\partial_e(T)$ is a compact space, 
 with finite covering dimension,
then $T$ satisfies condition (C).


(iv) If $T$ is a Bauer simplex, then $X=\partial_e(T)$ is a compact subset of $T.$
Suppose that  $X=\cup_{n=1}^\infty X_i,$ where 
each $X_i$ is compact and finite dimensional such 
that $X_i\cap X_j=\emptyset,$ if $i\not=j$ and $i,j\ge 2.$
Then 
 $X$ satisfies condition (C) if and only if 
   $X_n$ is also relatively open
   for $n\ge 2.$
   To see this, let us assume that $X$ satisfies condition (C).
   %
   Then, for any $k,m\in\N,$ 
   $\overline{X\setminus \cup_{j=1}^m X_{j+k}}\cap (\cup_{j=1}^m X_{j+k})=\emptyset.$
   It follows that each $\cup_{j=1}^m X_{j+k}$ is open.
   In particular, $X_j$ is clopen for all $j\ge 2.$
   
   For the converse,  suppose that each $X_i$ is a clopen subset of the compact set $X,$ $i\ge 2.$ 
   Then $X\setminus \cup_{j=1}^m X_{j+k}$ 
   is a closed subset of $X,$ and hence compact. By Proposition \ref{P-L},
   $\overline{{\rm conv}(X\setminus \cup_{j=1}^m X_{j+k})}=M_{X\setminus \cup_{j=1}^m X_{j+k}}.$
   Thus $X$ satisfies condition (C). 
   
(v) Let $\{K_n\}$ be a sequence of compact metric spaces with finite covering dimension.
Let $X$ be the one - point compactification of disjoint union of $\{K_n\}.$ 
Put $D_1=C(X).$ Then $\partial_e(T(D_1))=\cup_{n=1}^\infty X_n,$ 
where $X_1={\rm the\,\, infinite\,\, point}=\{\xi_\infty\}$ and $X_{n+1}=K_n,$ $n\in \N.$ 
Note $\partial_e(T(D_1))=X$ is compact. 
By (iv), 
$T(D_1)$ satisfies the condition (C). 
%
We would like to mention that  we may choose each $K_n$  the $n$-dimensional cube,
or the $n$-dimensional spheres, 
$n\in \N,$  for example. Then $X$ is compact and 
has countable (but not  necessarily finite) dimension (see also (8) in Example \ref{exm-1}
below for  examples of simple \CA s whose tracial state space is $X$). 

(vi) 
Examples in   (1) and  (2) 
of Example \ref{exm-1}  below show
that there are different Choquet simplexes with homeomorphic extremal boundaries.
When $\partial_e(T)$ is not compact, we think that 
the condition that $\overline{{\rm conv}(X\setminus \cup_{j=1+k}^m X_j)}=M_{X\setminus \cup_{j=1+k}^m X_j}$
is an affine condition
(not merely a topological condition on its extremal points). 
\end{rem}

%

\begin{prop}\label{Padd-compact}
Let $T$ be a Choquet simplex and $\partial_e(T)$ be a disjoint union of 
finite dimensional compact sets 
$\{ X_n\}$ such that
 each $X_{1+n}$ is  relatively open (in $\partial_e(T)$)
for all $n\in \N.$ 
Suppose that $\overline{\partial_e(T)}\setminus \partial_e(T)\subset \overline{{\rm conv}(X_1)}.$
Then $T$ satisfies condition (C).
\end{prop}

\begin{proof}

For  any $k, m\in \N,$ put 
$Y:=\partial_e(T)\setminus \cup_{j=1}^m X_{j+k}.$
To show that $T$ satisfies condition (C), it suffices to  show that $\overline{{\rm conv}(Y)}=M_Y.$
Suppose that $x_n\in {\rm conv}(Y)$ such that
$x_n\to \xi\in T$ as $n\to\infty.$
We will show that $\xi\in M_Y.$

Write $\xi=\af\xi_0+(1-\af)\xi_1,$ where $\xi_0$ associated with a boundary measure 
which concentrated on $\cup_{j=1}^m X_{j+k}$ and $\xi_1$ on $Y,$ and 
$0\le \af\le 1.$    
In another words, 
\beq\label{501-n1}
\mu_\xi(\cup_{j=1}^m X_{j+k})=\af\andeqn \mu_\xi(Y)=(1-\af).\
\eneq
%
Assume that $\af>0.$

Note that $Z_1:=X_1\sqcup \cup_{j=1}^m X_{j+k}$ is compact. 
Let $g\in C(Z_1)$ be such that $0\le g\le 1,$ $g|_{X_1}=0$ and $g|_{\cup_{j=1}^m X_{j+k}}=1.$ 
By Theorem 11.14 of \cite{kG}, there is $f\in \Aff(T)$ such that
$0\le f\le 1$ and ${f}|_{Z_1}=g.$ 
%
%
%
Let $0<\ep<\af/16.$

Claim (1): 
 there is an integer $N_0\ge 1$ such that, 
  for all $n\ge N_0,$ 
  and any  $z\in X_n,$ 
  \beq
 \inf \{|f(z)-f(x)|: x\in {\rm conv}(X_1)\}<\ep/2.
  \eneq
  
  Otherwise, there would be a subsequence $\{n_i\}\subset \N$
  such that   $t_{n_j}\in X_{n_j},$ $n_j\to\infty,$ and 
  \beq\label{cdadd4-28}
  \inf\{|f(t_{n_j})-f(x)|: x\in {\rm conv}(X_1)\}\ge \ep/2.
  \eneq
  Since $T$ is compact, we may assume that
  $t_{n_i}\to t_0\in T,$ as $i\to\infty.$ 
  Since each $X_{1+n}$ is relatively open for all $n\in \N$ and $\overline{\partial_e(T)}\setminus \partial_e(T)\subset \overline{\rm conv}(X_1),$
  it would follow that $t_0\in \overline{{\rm conv}(X_1)}.$
  Then 
  \beq
  \lim_{i\to\infty}|f(t_{n_i})-f(t_0)|=0.
  \eneq
  This contradicts with \eqref{cdadd4-28}. 
  The claim is proved. 
  We may assume that $N_0>m.$

Since $x_n\to \xi,$ we also have
\beq\label{429-n1}
\lim_{n\to \infty}|f(x_n)-f(\xi)|=0.
\eneq
%
%
%
    Put $Y_0=X_1\cup (\cup_{j=N_0+1}^\infty X_{j+k}).$ 
    Recall that $x_n\in {\rm conv}(Y)$ for all  $n\in  \N.$ 
  We may write $x_n=\af_n \xi_n^0+(1-\af_n)\xi_n^1,$ where $0<\af_n\le 1,$
  $\xi_n^0\in M_{Y_0}$ and $\xi_n^1\in M_{\small{\cup_{j=m+1}^{N_0}X_{j+k}}}.$ 
  Since $\cup_{j=m+1}^{N_0}X_{j+k}$ is compact, 
  by Proposition \ref{P-L}, $\overline{{\rm conv}(\cup_{j=m+1}^{N_0}X_{j+k})}=M_{\cup_{j=m+1}^{N_0}X_{j+k}}.$
  We may assume (by passing to a subsequence)
  that $\xi_n^1\to \xi^1\in M_{\cup_{j=m+1}^{N_0}X_{j+k}}$
  and $(1-\af_n)\to 1-\bt,$ where $0\le \bt \le 1.$  Hence 
  we may also assume that $\xi_n^0\to \xi^0$ and $\af_n\to \bt.$
  It follows that $\xi=\bt \xi^0+(1-\bt)\xi^1.$ 
  Hence 
  \beq
  \lim_{n\to\infty}|f((1-\af_n)\xi_n^1)-f((1-\bt)\xi^1))|=0.
  \eneq
  Combining with \eqref{429-n1}, we obtain
  \beq
  \lim_{n\to\infty}|f(\af_n\xi_n^0)-\bt f(\xi^0)|=0.
  \eneq
  Since $\af_n\to \bt,$ we actually have
  \beq\label{429-n3}
  \lim_{n\to\infty}\bt |f(\xi_n^0)-f(\xi^0)|=0.
  \eneq
Write $\xi^0=t\xi^0_0+(1-t)\xi^0_1,$ where $\xi^0_0\in M_{\cup_{j=1}^m X_{j+k}},$
  $\xi^0_1\in M_Y$ and $0\le t\le 1.$ 
  We claim that $\bt t\ge \af.$  If   $\bt t=1,$ then $\bt t\ge \af.$ 
  Suppose that $0<\bt t<1.$ Then 
  \beq
  \xi&=&\bt \xi^0+(1-\bt)\xi^1=\bt t \xi^0_0+\bt (1-t)\xi^0_1+ (1-\bt)\xi^1\\
    &=&  \bt t \xi^0_0+(1-\bt t)(({\bt (1-t)\over{1-\bt t}})\xi^0_1+({1-\bt\over{(1-\bt t)}})\xi^1).
      \eneq
  Note that ${\bt (1-t)\over{1-\bt t}}+{1-\bt\over{(1-\bt t)}}=1.$
  It follows that  $({\bt (1-t)\over{1-\bt t}})\xi^0_1+({1-\bt\over{(1-\bt t)}})\xi^1\in M_Y.$
  Hence, by \eqref{501-n1},  $\bt t\ge \af>0.$ 

  By \eqref {429-n3},
  there is  $N_1\in \N$  such that
  %
\beq\label{429-n2}
|f(\xi_n^0)-f(\xi^0)|<\ep\rforal n\ge N_1.
\eneq
Write $\xi_n^0=\sum_{i=1}^{m(n)}\bt_{n,j} z_{n,j},$
where $z_{n,j}\in Y_0,$  $0\le \bt_{n,j}\le 1$   and $\sum_{j=1}^{m(n)} \bt_{n,j}=1.$
By Claim (1), there are $\zeta_{n,j}\in {\rm conv}(X_1)$
such that
\beq
|f(z_{n,j})-f(\zeta_{n,j})|<\ep/2, \,\, 1\le j\le  m(n),\,\, n\in \N.
\eneq
Put $\zeta_n^0=\sum_{i=1}^{m(n)} \bt_{n,j}\zeta_{n,j}.$ Then $\zeta_n^0\in {\rm conv}(X_1).$
We also have that
%
%
\beq
|f(\xi_n^0)-f(\zeta_n^0)|<\ep/2.
\eneq
Since $f(\zeta_n^0)=0,$ we obtain 
$
f(\xi_n^0)<\ep/2.
$
By \eqref{429-n2}, 
$f(\xi^0)<3\ep/2.$
Hence 
\beq
f(\bt \xi^0)=\bt f(\xi^0)<\bt \cdot 3\ep/2<3\ep/2<\af/8.
\eneq
Since $\bt\xi^0=\bt t\xi^0_0+\bt (1-t)\xi^0_1,$ we have 
\beq
f(\bt \xi^0)=\bt t f(\xi_0)+\bt (1-t)f(\xi_1^0)\ge \bt t\ge \af.
\eneq
This  is a contradiction. Therefore $\af=0.$ Hence $\xi\in M_Y.$ 
\end{proof}

%
%
\begin{cor}
\label{Padd-countable}
Let $T$ be a Choquet simplex with countable extremal boundary
$\partial_e(T).$
  Suppose that  there  is  a compact 
  subset $T_0\subset \partial_e(T)$ such that 
  $\overline{\partial_e(T)}'\subset \overline{{\rm conv}(T_0)},$
  where $\overline{\partial_e(T)}'$ is the set of cluster points of $\overline{\partial_e(T)}.$
Then $T$ satisfies condition (C).
\end{cor}

\begin{proof}
We may assume that  $\partial_e(T)=\{t_m\}_{m\in \N}\sqcup T_0.$ 
By Corollary 11.19 of \cite{kG}, 
and Proposition \ref{P-L}, 
$\overline{{\rm conv}(T_0)}=M_{T_0}$ is a closed face and $\partial_e(M_{T_0})=T_0.$
Put $X_1=T_0$ and $X_{n+1}=\{t_n\},$ $n\in \N.$ 
We claim that each $t_n$ a relatively isolated point in $\partial_e(T).$
Otherwise, 
if $t_n$ is a cluster point of $\partial_e(T)$
for some $n,$ then it  is a cluster point of $\overline{\partial_e(T)}.$
By the assumption, $t_n\in M_{T_0}.$ But $t_n$ is an extremal point,  this would imply
$t_n\in T_0$ (as $M_{T_0}$ is a closed face). This proves the claim. 
 Hence $X_{n+1}$ is relatively clopen for all $n\in \N.$ 
Moreover every point of $\overline{\partial_e(T)}\setminus \partial_e(T)$ is a cluster point of $\partial_e(T).$
Thus $\overline{\partial_e(T)}\setminus \partial_e(T)\subset \overline{{\rm conv}(T_0)}.$ 
We then apply Proposition \ref{Padd-compact}.
\end{proof}

Note that $T_0$ in \ref{Padd-countable} could be a union of finitely many convergent sequences together with their limit points (or a finite subset). 
 Let us mention  that  $\Q$ is not a $G_\dt$-set. Therefore, 
  $\Q$ with the usual topology cannot be realized as the extremal boundary of a (metrizable)
  Choquet simplex (see Cor. I.4.4 of \cite{Alf}).
  
  \begin{prop}\label{P52}
  Let $T$ be a Choquet simplex with $\partial_e(T)=\cup_{n=1}^\infty X_n,$
  where $X_n$ satisfies (1) and (2) in the condition (C).
  Then $\overline{\partial_e(T)}\setminus \partial_e(T)\subset 
  \overline{{\rm conv}(X_1)}.$
  \end{prop}
  
  \begin{proof}
  
  Let $\xi\in \overline{\partial_e(T)}\setminus \partial_e(T).$
Then there exists a sequence $x_n\in \partial_e(T)$ such that
$x_n\to \xi.$  Since, for each $m,$   $\cup_{j=2}^m X_j$ is compact, 
we may assume that $x_n\in X\setminus \cup_{j=2}^{m(n)} X_j$
for some $m(n)\to\infty$ as $n\to\infty.$
It follows that, for any $m\ge 2,$ $\xi\in \overline{X\setminus \cup_{j=2}^m X_j}.$
Hence, by condition (C),
\beq
\xi\in \cap_{m=1}^\infty (\overline{{\rm conv}(X\setminus \cup_{j=2}^m)})=\cap_{m=1}^\infty M_{X\setminus
\cup_{j=2}^m X_j}.
\eneq
Then, for each fixed $m,$ $\mu_\xi(\sum_{j=2}^m X_j)=0.$ 
It follows that $\mu_\xi(\cup_{j=2}^\infty X_j)=0.$ Hence
$\xi\in M_{X_1}\subset \overline{{\rm conv}(X_1)}.$

\end{proof}

Next we would like to provide more and concrete  examples of Choquet simpleces $S$  which 
satisfy condition (C) but are not Bauer simplexes. 


\begin{exm}\label{exm-1}

(1) This example is the same as  Example 3.3 of \cite{CETW}.
Denote by ${\bf c}$ the \CA\,  of convergent sequences.
Let $X=\{0\}\cup \{1/k\}_{k\in \N}\subset [0,1].$
We may view ${\bf c}=C(X)$ as the algebra of continuous functions on $X.$ 
Let 
\beq
E_4=\{x\in {\bf c}\otimes M_2: x(0)=\begin{pmatrix} a & 0\\
                0& b\end{pmatrix},\, a,b\in \C\}.
\eneq
This is one of the examples presented by L. G. Brown in \cite{Brs} (p.~868) to demonstrate different 
phenomena. 
Since $E_4$ is unital, $S=T(E_4)$ is a Choquet simplex. 
Let $\tau_n: E_4\to \C$ be defined by $\tau_n(x)=t_{M_2}(x(1/n))$ for all $x\in E_4,$ $n\in \N.$
Set $\tau^+(x)=a$ and $\tau_-(x)=b$ (if $x(0)=\diag(a, b)$).  \CA\, 
$E_4$ is used in  Example 3.3 of \cite{CETW} to show, among other things, that its tracial state space 
is not a Bauer simplex. As in \cite{CETW},  
$\partial_e(S)=\{\tau_n: n\in \N\}\cup\{\tau^+, \tau^-\}.$
Note  (as in 3.3 of \cite{CETW}) that $\{\tau_n\}_{n\in \N}$ converges to $(1/2)(\tau^++\tau^-).$ 
So $\partial_e(S)$ is not closed. 
Let $X_1=\{\tau^+, \tau^-\}$ and $X_{n+1}=\{\tau_n\},$ $n\in \N.$ 
Then it is easy to see that $S$ satisfies condition (C) (see Proposition 
\ref{Padd-countable}) but is not a Bauer simplex.
%

(2) This is a slight modification of example in (2). 
Let 
\beq
E_4'=\{x\in {\bf c}\otimes M_3: x(0)=\begin{pmatrix} a & 0 &0\\
                0& b& 0\\
                0&0& c\end{pmatrix},\, a,b\in \C\}.
\eneq
Let $\tau^1(x)=a$ for all $x\in E_4',$ $\tau^2(x)=b$ for all $x\in E_4'$ and 
$\tau^3(x)=c$ (if $x(0)=\diag(a, b,c)$). 
Define $\tau_n(x)=t_{M_3}(x(1/n))$ for all $x\in E_4'.$ 
Define $X_1=\{\tau^1, \tau^2, \tau^3\},$ $X_n=\{\tau_n\},$ $n\in \N.$ 
Note that $\tau_n$ converges $(1/3)(\tau^1+\tau^2+\tau^3).$

One notes that, as topological spaces, $\overline{\partial_e(E_4')}$ is homeomorphic to 
$\overline{\partial_e(E_4)}$ (which is homeomorphic to $\{0\}\cup\{1/k\}_{k\in \N}$).
 Moreover, a homeomorphism can be given such that whose restriction 
on $\partial_e(E_4')$ gives a homeomorphism from 
$\partial_e(E_4')$ onto $\partial_e(E_4).$ 
 Both $T(E_4)$ and $T(E_4')$ satisfy condition (C). 
 However $T(E_4)$ and $T(E_4')$ have different affine structure.

(3) We now consider a \SCA\, $B$ of 
${\bf c}\otimes E_4.$

Let $X=\{0\}\cup \{1/k\}_{k\in \N}$ be as a subset of $[0,1].$ 
We may write ${\bf c}\otimes E_4=C(X, E_4).$ 
Let $D_0=\{g\in C(X, M_2): g(x)=g(0)\in \begin{pmatrix} a & 0\\
                0& b\end{pmatrix},\, a,b\in \C\, \rforal x\in X\}.$
                We may viewed $D_0$ as a unital \SCA\, of $E_4.$
 %
%
So $D_0$ is the \SCA\,consisting of  constant diagonal $2\times 2$ matrices
and $D_0\cong \C\oplus \C.$

Define 
\beq
B=\{x\in C(X, E_4): x(0)\in D_0\}.
\eneq
Since $B$ is unital, $T(B)$ is a Choquet simplex. 
Put $I=c_0\otimes E_4.$ We may view $I$ as an ideal of $B.$ Then $B/I\cong \C\oplus \C.$
Note that  
$\partial_e(T(B))$ has only countably many points but has 
infinitely many limit points and is not compact.
We may write 
$$\partial_e(T(B))=\{0\}\times \{\tau^+, \tau^-\}\sqcup \cup_{n=1}^\infty \{1/k\}\times \partial_e(T(E_4)).$$
Note that, for each fixed  $n,$  $\{1/k\}\times \tau^+\to \{0\}\times \tau^+$ 
and $\{1/k\}\times \tau^-\to \{0\}\times \tau^-,$  as $k\to\infty,$ respectively.
Moreover,  for fixed $m\in N,$
$\{1/k\}\times \{1/m\}\to \{0\}\times  (1/2)(\tau^++\tau^1)$ as $k\to \infty,$
$\{1/k\}\times \{1/m\}\to \{1/k\}\times (1/2)(\tau^++\tau^1)$ as $m\to\infty,$ 
and, for any subsequence $\{m_k\},$\\
$\{1/k\}\times \{1/m_k\}\to \{0\}\times  (1/2)(\tau^++\tau^1)$ as $k\to\infty.$
Define 
$X_1=X
 \times \{\tau^+, \tau^-\}.$
Then $X_1$ is compact and zero dimensional. 
Put $Y_{k,n}=\{1/k\}\times \{\tau_n\}.$   There is a bijection $s: \N\to \{(k,n): k,n\in \N\}.$
Define $X_{n+1}=Y_{s(n)},$ $n\in \N.$ Then $X_{n+1}$ is compact and zero dimensional,
and $\partial_e(T(B))=\cup_{n=1}^\infty X_n.$
Moreover, $X_{1+n}$ is relatively open, and 
$\overline{\partial_e(T(B))}\setminus \partial_e(T(B))\subset \overline{{\rm conv}(X_1)}.$
By 
Proposition \ref{Padd-compact} (or Corollary \ref{Padd-countable}), 
%
 $T(B)$ satisfies condition (C) but not a Bauer simplex.


%
(4) 
Let  $X$ be as in (v) of Remark of \ref{Rmark1}. 
Define  
\beq
D_2=\{f\in C(X, M_2):  f(\xi_\infty)=\begin{pmatrix} a & 0\\
                0& b\end{pmatrix},\, a,b\in \C\}.
\eneq
Then $D_2$ is a unital \CA,  $T(D_2)$ is a Choquet simplex and $\partial_e(T(D_2))=\{\tau^+, \tau^-\}\sqcup (\cup_{n=1}^\infty K_n)$ which is not compact.
%
Put $X_1=\{\tau^+, \tau^-\}$ and $X_{n+1}=K_n,$ $n\in \N.$  
Then $\partial_e(T(D_2))=\cup_{n=1}^\infty X_n,$ where each $X_n$ is   compact   and has finite covering dimension. Moreover $X_{1+n}$ is relatively open for $n\in \N.$
Note that $\overline{\partial_e(T(D_2))}\setminus \partial_e(T(D_2))=\{(1/2)(\tau^++\tau^-)\}\subset {\rm conv}(X_1).$
By Proposition \ref{Padd-compact}, $T(D_2)$ satisfies condition (C) and is not a Bauer simplex.
Moreover $T(D_2)$ is not finite dimensional, if $\{{\rm dim} X_n\}$ is not bounded.

(5) Let $Y$ be a connected  and locally connected compact metric space (these conditions
are for convenience not necessarily needed) with finite covering dimension  and $s: [0,1]\to Y$ 
be a continuous surjective map (by Hahn--Mazurkiewicz Theorem). Then we obtain a unital embedding $j_Y: C(Y)\to C([0,1]).$
Let $Q$ be the UHF-algebra with 
$(K_0(Q), K_0(Q)_+, [1_Q])=(\Q, \Q_+, 1)$ and $j_I: C([0,1])\to Q$ be  a unital embedding.
We then obtain a unital embedding $\iota_Y:=j_I\circ j_Y: C(Y)\to A.$ 
Put $C=\iota_Y(C(Y)).$ Let $\tau_Q$ be the unique tracial state of $Q.$ 
Define $\tau_Y: C(Y)\to \C$ by $\tau_Y(f)=\tau_Q\circ \iota_Y(f)$ for all $f\in C(Y).$ 

Let $X$ be as in (v) of Remark of \ref{Rmark1}. We may choose $X$ so that it  is not finite dimensional. 
Similar to  (4) above, define 
\beq
D_3=\{ f\in C(X, Q): f(\xi_\infty)\in C\}.
\eneq
Then $D_3$ is unital and $T(D_3)$ is a Choquet simplex.

Let $\pi_\infty: D_3\to C\cong C(Y)$ be the quotient map and $y\in Y.$
Then $y$ is identified with $\tau_y: D_3\to \C$ defined 
by $\tau_y(f)=\pi_\infty(f)(y)$ for all $f\in D_3.$
Let $X_1=Y,$ $X_{n+1}=K_{n},$ $n\in \N.$ Then $\partial_e(T(D_3))=\cup_{n=1}^\infty X_n$
and $X_i\cap X_j=\emptyset,$ if $i\not=j$ and $i,j\in \N.$ 
Note that $X_{1+n}$ is also relatively open.

Choose, for each $n,$  $x_n\in X_{n+1}.$ 
Here $x_n$ is identified with the tracial state $\tau_n: D_3\to \C$
by $\tau_n(f)=\tau_Q(f(x_n))$ for $f\in D_3,$ $n\in \N.$ 
Then we have $x_n\to \tau_C,$ where 
$\tau_C: D_3\to \C$ is defined by $\tau_C(f)=\tau_Q\circ f(\xi_\infty)$ for $f\in D_3.$
Note that $\tau_C\in \overline{{\rm conv}(X_1)}$  and $\tau_C\not\in \partial_e(T(D_3)).$
In particular, $T(D_3)$ is not a Bauer simplex. However, 
$\tau_C\in \overline{{\rm conv}(Y)}.$  In fact 
$\{\tau_C\}=\overline{{\rm conv}(Y)}\setminus {\rm conv}(Y)\subset \overline{{\rm conv}(X_1)}.$
Thus, by Proposition \ref{Padd-compact},  $\partial_e(T(D_3))$ 
satisfies condition (C).
%

%
(6) Consider a unital \SCA\, $D_3^d$ of $D_3$  as follows
\beq
D_3^d=\{f\in C(X, Q): f(x)=f(\xi_\infty)\in C\rforal x\in X\}.
\eneq
Note $D_3^d\cong C\cong C(Y).$
Let $\{F_n\}$ be a sequence of finite dimensional compact metric spaces.
Let $F$ be the one-point compactification of disjoint union of $\{F_n\}.$
Denote by $\zeta_\infty$ the one point. 
Define 
\beq
D_4=\{g\in C(F, D_3): f(\zeta_\infty)\in D_3^d\}.
\eneq
$D_4$ is a unital \CA\, and $T(D_4)$ is a Choquet simplex.
Let $J=\{g\in C(F, D_3): g(\zeta_\infty)=0\}.$ One has the following short exact sequence:
\beq
0\to J\to D_4\to D_3^d\cong C(Y)\to 0.
\eneq
One then computes that
\beq
\partial_e(T(D_4))&=&(\cup_{n=1}^\infty F_n\times \partial_e(T(D_3)))\cup \{\zeta_\infty\}\times Y\\
&=&
(\cup_{n, m=1}^\infty F_n\times X_m)\cup (\cup_{n=1}^\infty F_n\times Y)\cup \{\zeta_\infty\}\times Y\\
&=&(\cup_{n, m=1}^\infty F_n\times X_m)\cup (F\times Y).
\eneq 
Let $r: N\to \N\times \N$ be a bijection. Let us write $r(n)=(r_1(n), r_2(n))$ for $n\in \N.$ 
Put $\Omega_1=F\times Y, $ $\Omega_{1+n}=F_{r_1(n)}\times  X_{r_2(n)+1},$ $n\in \N.$
Note that $\Omega_n$ is compact for each $n\in \N,$
$\partial_e(T(D_4))=\cup_{n=1}^\infty \Omega_n,$ 
$\Omega_i\cap \Omega_j=\emptyset,$ if $i\not=j$ and $i,j\in \N.$ 
Each $\Omega_{1+n}$ is also relatively open, $n\in \N.$

Note that, for each $x\in \cup_{n=1}^\infty F_n,$  
$(x\times \tau_C)(g)=\tau_Q(g(x)(\xi_\infty))$ for all $g\in D_4.$
If $x_k\in \cup_{n=1}^\infty F_n$ such that $x_k\to \xi\in F$ and $z_k\in X_{1+k},$  $k\in \N,$
then $x_k\times z_k\to \xi \times \tau_C.$ 
However
\beq
\xi \times \tau_C\in\overline{\partial_e(T(D_4))}\setminus \partial_e(T(D_4))
\eneq
(for any $\xi \in F$).
Note also, for any $f\in D_4,$ 
$f(x_n)\to f(\zeta_\infty),$ if $x_n\in F$ and $x_n\to \zeta_\infty.$
It follows that, for any $z\in \partial_e(T(D_3)),$ $x_n\times z\to \zeta_\infty\times \tau_C.$
Hence $T(D_4)$ is not a Bauer simplex. 
In fact $\overline{\partial_e(T(D_4))}\setminus \partial_e(T(D_4))=F\times \{\tau_C\}$
which is a subset of $\overline{{\rm conv}(F\times Y)}=\overline{{\rm conv}(\Omega_1)}.$
Hence, by Proposition \ref{Padd-compact},  $T(D_4)$ satisfies condition (C). 
In this case 
$\overline{\partial_e(T(D_4))}\setminus \partial_e(T(D_4))$
may contains interesting topological spaces.

(7) 
We will modify the example (3) and 
 identify $M_2(Q)$ with $Q.$
Denote by $j_1: M_2(Q)\to Q$ the isomorphism such that $j_1(\diag(1, 0))=e_1\in Q,$ where 
$\tau_Q(e_1)=1/2.$ 
Let 
$\iota_1: Q\oplus Q\to Q$  be defined by
$\iota_1(a\oplus b)=j_1(\diag(a, b))$ for $a\oplus b\in Q\oplus Q.$

Let $X=\{0\}\cup \{1/k\}_{k\in \N}\subset [0,1]$ and 
\beq
C=\{f\in C(X, Q): f(0)\in \iota_1(Q\oplus Q)\}.
\eneq
Let $\pi^+: \iota_1(Q\oplus Q)\to Q$ be defined by
$\pi^+(\iota_1(a\oplus b))=a$ and $\pi^-: \iota_1(Q\oplus Q)\to Q$ by 
$\pi^-(\iota_1(a\oplus b))=b$ for all $a, b\in Q,$ respectively.
Denote by $\tau^+: C\to \C$ the tracial state on $C$ defined 
by $\tau^+(f)=\tau_Q(\pi^+(f(0)))$ and by $\tau^-: C\to \C$ the tracial state on $C$ 
defined by $\tau^-(f)=\tau_Q(\pi^-(f(0))$
for all $f\in C,$ respectively. 
Let $D\subset C$ be such that
\beq\nonumber
D=\{f\in C(X, Q): f(x)=f(0)\in  \iota_1(\iota_1(Q\oplus Q)\oplus \iota_1(Q\oplus Q))\rforal x, \in X\}\subset C.
\eneq
We will identify $D$ with $\iota_1(\iota_1(Q\oplus Q)\oplus \iota_1(Q\oplus Q))\cong Q\oplus Q\oplus Q\oplus Q.$
Define 
\beq
A=\{g\in C(X, C): g(0)\in D\}.
\eneq
Note that, for each $x\in X,$ $g(x)\in C.$ 
For $j, k\in \N,$  define  $\tau_{1/j,1/k}: A\to \C$ by $\tau_{1/j, 1/k}(g)=\tau_Q(g(1/j)(1/k))$ for 
$g\in A.$  For $j\in \N,$  define $\tau_{1/j}^+$ by
$\tau_{1/j}^+(g)=\tau^+(g(1/j))$  and $\tau_{1/j}^-$ by
$\tau_{1/j}^-(g)=\tau^-(g(1/j))$ 
 for all $g\in A.$   Define 
 $$
 \pi^+_+: \iota_1(\iota_1(Q\oplus Q)\oplus \iota_1(Q\oplus Q))\to Q
\,\, {\rm by}\,\,\,\pi^+_+(\iota_1(\iota_1(a\oplus b)\oplus \iota_1(c\oplus d)))=a,
$$
 define $\pi^+_-$ by $\pi^+_-((\iota_1(\iota_1(a\oplus b)\oplus \iota_1(c\oplus d)))=b,$
 define $\pi^-_+$ by $\pi^-_+((\iota_1(\iota_1(a\oplus b)\oplus \iota_1(c\oplus d)))=c,$
 define $\pi^-_-$ by $\pi^-_-((\iota_1(\iota_1(a\oplus b)\oplus \iota_1(c\oplus d)))=d$
 for all $a, b, c,d\in Q,$ respectively. 
 Define $\tau^+_+: A\to \C$ by $\tau^+_+(g)=\tau_Q\circ \pi^+_+(g(0)),$
 $\tau^+_-: A\to \C$ by $\tau^+_-(g)=\tau_Q(\pi^+_-(g(0))),$
 $\tau^-_+:A\to \C$ by $\tau^-_+(g)=\tau_Q(\pi^-_+(g(0)))$ and
 $\tau^-_-: A\to \C$ by $\tau^-_-(g)=\tau_Q(\pi^-_-(g(0)))$
 for all $g\in A,$ respectively. 
 
 Then 
 \beq
 \partial_e(T(A))=\{\tau_{1/j,1/k}: j,k\in \N\}\cup\{\tau_{1/j}^+, \tau_{1/m}^-
 : j,m\in\N\}
 \cup\{\tau^+_+, \tau^+_-, \tau^-_+, \tau^-_-\}.
 \eneq

We compute that, for each fixed $j\in \N,$ $\tau_{1/j, 1/k}\to (1/2)(\tau_{1/j}^++\tau_{1/j}^-),$ as 
$k\to\infty,$ 
for each fixed $k\in \N,$ $\tau_{1/j, 1/k}\to (1/4)(\tau^+_++\tau^+_-+\tau^-_++\tau^-_-)$  as $j\to \infty,$
and, $\tau_{1/j, 1/k}\to (1/4)(\tau^+_++\tau^+_-+\tau^-_++\tau^-_-)$  as $j,k\to \infty.$
Moreover,
$\tau_{1/j}^+\to (1/2)(\tau^+_++\tau^+_-),$ as $j\to\infty,$ 
and  $\tau_{1/j}^-\to (1/2)(\tau^-_++\tau^-_-).$

We claim that $\partial_e(T(A))$ does not satisfy condition (C).
Note that each point in $\partial_e(T(A))$ is a relatively isolated point and hence any infinite 
subset of $\partial_e(T(A))$ has a limit point in $T(A)\setminus \partial_e(T(A))$
(recall that $T(A)$ is compact as $A$ is unital).
So every compact subset of $\partial_e(T(A))$ is finite. 

Suppose that $\partial_e(T(A))=\cup_{n=1}^\infty X_n$
such that  each $X_n$ is compact, and 
$X_i\cap X_j=\emptyset,$ if $i\not=j$ and $i,j\ge 2.$ 
We have shown that each $X_n$ has finitely many points. 
Therefore, we may choose  $j_0\in \N$ such that $\tau_{1/j_0}^+\not\in X_1.$
There are some $k, m\in \N$ such that $\tau_{1/j_0}^+\in \cup_{j=1}^mX_{j+k}.$
Hence $\tau_{1/j_0}^+\not\in X\setminus \cup_{j=1}^mX_{j+k}.$
Since $\cup_{j=1}^mX_{j+k}$ has only finitely many points, 
$X\setminus \cup_{j=1}^mX_{j+k}$ contains infinitely many 
points with the form $\tau_{1/j_0, 1/k}.$ 
Hence, $(1/2)(\tau_{1/j_0}^++\tau_{1/j_0}^-)\in \overline{{\rm conv}(X\setminus \cup_{j=1}^mX_{j+k})}.$
Since $\tau_{1/j_0}^+$ is an extremal point which is not in $X\setminus \cup_{j=1}^mX_{j+k},$
we have 
$(1/2)(\tau_{1/j_0}^++\tau_{1/j_0}^-)\not\in M_{X\setminus \cup_{j=1}^mX_{j+k}}.$
In other words, $\partial_e(T(A))$ does not satisfy condition (C).
Note that $\partial_e(T(A))$ is countable.

However, if we defined 
$D=\{f\in C: f(x)=f(0)\in \iota_1(Q\oplus Q)\, \rforal x\in X\}\subset C$ 
and 
\beq
B_1=\{g\in C(X, C): g(0)\in D\},
\eneq
then   $T(B_1)$ is affinely homeomorphic to $T(B)$
(the same $B$ as in Example (3) above which satisfies condition (C). 
%

(8) \CA s $E_4,$ $E_4',$  $B,$  and  $D_2, D_3$  and $D_4$  are not simple. 
From \cite{Btrace}, any metrizable Choquet  simplex  $T$
can be realized as a tracial state space of a unital simple AF-algebra.
In fact $T$ can also be realized as a tracial state space of a unital (or stably projectionless) 
 simple \CA\, $A$ which has  arbitrary  $K_1(A)$-group (and any compatible $K_0(A)$)
 (see, for example, \cite{GLNI}, \cite{eglnkk0} and \cite{GLIII}). 

\end{exm}

\begin{lem}\label{Lclaimaff}
Let $T$ be a Choquet simplex such that
 (1) $\partial_e(T)=Y\cup Z$ and $Y\cap Z=\emptyset,$ 
 (2)   
 $Y$ and $Z$ are  Borel and,
 (3)   $M_Y=
 \overline{{\rm conv}(Y)}$ and  $M_Z= \overline{{\rm conv}(Z)}.$

Suppose that $x_n=\af_n y_n+(1-\af_n) z_n\in T$ with $\af_n\in [0,1]$ and $y_n\in M_Y$
and $z_n\in M_Z,$ $n\in \N,$ and if 
$x_n\to \af y+(1-\af)z\in T,$ where $\af\in [0,1],$ $y\in M_Y$ and $z\in M_Z,$
then $\af_n\to \af, $ $y_n\to y$ and $z_n\to z.$ 
\end{lem}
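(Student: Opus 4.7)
The plan is to combine uniqueness of the Choquet representing measure with a standard subsequence/compactness argument. First, I would establish that for any $x\in T$ the decomposition $x=\af y+(1-\af)z$ with $y\in M_Y$, $z\in M_Z$, $\af\in[0,1]$ is essentially unique. Write the representing measure $\mu_x$ on $\partial_e(T)$ as $\mu_x=\mu_x|_Y+\mu_x|_Z$ and put $\af=\mu_x(Y)$. If $\af\in(0,1)$, the normalized restrictions $\mu_x|_Y/\af$ and $\mu_x|_Z/(1-\af)$ are probability measures on $Y$ and $Z$ whose barycenters $y,z$ lie in $M_Y, M_Z$ respectively; any other such decomposition $x=\af'y'+(1-\af')z'$ produces the measure $\af'\nu_Y+(1-\af')\nu_Z$ representing $x$, where $\nu_Y,\nu_Z$ are the representing measures of $y',z'$, supported on $Y,Z$ (this uses hypothesis (3), which identifies elements of $M_Y$ and $M_Z$ with barycenters of measures supported on $Y$ and $Z$). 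By the Choquet uniqueness theorem these two measures coincide, and restriction to $Y$ and $Z$ forces $\af'=\af$, $y'=y$, $z'=z$. At the boundary values $\af\in\{0,1\}$ the ``inactive'' component of the decomposition is not uniquely determined, but the corresponding claim in the lemma is then to be read as concerning only the unambiguous component.

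Second, by hypothesis (3), $M_Y=\overline{{\rm conv}(Y)}$ and $M_Z=\overline{{\rm conv}(Z)}$ are closed convex subsets of the compact Choquet simplex $T$, hence compact. Therefore $[0,1]\times M_Y\times M_Z$ is a compact metric space, and the evaluation map $(\af,y,z)\mapsto \af y+(1-\af)z$ into $T$ is continuous.

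Third, I would run a subsequence argument: every subsequence of $\{(\af_n,y_n,z_n)\}$ admits a sub-subsequence converging in $[0,1]\times M_Y\times M_Z$ to some $(\af',y',z')$. Continuity of the barycentric map together with the hypothesis $x_n\to \af y+(1-\af)z$ yields $\af'y'+(1-\af')z'=\af y+(1-\af)z$, and the uniqueness step above forces $(\af',y',z')=(\af,y,z)$ (modulo the boundary caveat). Since every subsequence of $\{(\af_n,y_n,z_n)\}$ has a further subsequence converging to $(\af,y,z)$, the full sequence converges, which gives $\af_n\to\af$, $y_n\to y$, and $z_n\to z$.

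The main obstacle is the uniqueness step. One has to use hypothesis (3) in the converse direction, namely to identify points of $M_Y$ and $M_Z$ with barycenters of probability measures concentrated on $Y$ and $Z$, so that the Choquet uniqueness theorem can be applied to the composite representing measure $\af'\nu_Y+(1-\af')\nu_Z$. Care is also needed at the boundary cases $\af\in\{0,1\}$, where one of $y$ or $z$ fails to be intrinsically determined by the limit point $x_0$.
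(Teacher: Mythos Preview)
Your proposal is correct and follows essentially the same route as the paper: establish uniqueness of the decomposition $x=\af y+(1-\af)z$ via the Choquet uniqueness theorem (using hypothesis~(3) to ensure the representing measures of $y'\in M_Y$ and $z'\in M_Z$ are concentrated on $Y$ and $Z$), then use compactness of $M_Y$ and $M_Z$ and a subsequence argument to force convergence. The paper phrases the last step as a proof by contradiction on a single component, but the content is identical. Your observation about the boundary cases $\af\in\{0,1\}$ is well taken; the paper does not address this, and indeed only $\af_n\to\af$ together with convergence of the active component can be concluded there---which is all that is needed in the subsequent application.
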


\begin{proof}
By the Choquet theorem, we identify
$M_Y$ and $M_Z$ as convex subsets of $T.$
By (1) and (3),
$T=\{\af \mu_y+(1-\af)\mu_z: 0\le \af\le 1,\,\, \mu_y\in M_Y\andeqn \mu_z\in M_Z\}.$
Let $t\in T$ be represented by the Borel probability  measure 
$\mu_t.$ Then $\mu_t=\af \mu_y+(1-\af)\mu_z$ for some  $\mu_y\in M_Y,$  $\mu_z\in M_Z$
and $\af\in [0,1].$ 
In other words, $\mu_t(Y)=\af$ and $\mu_t(Z)=(1-\af).$ 
By the Choquet  theorem, this decomposition is unique.
(It is also helpful to note that (1) and (3) imply that $\overline{Y}\cap \overline{Z}=\emptyset.$)


Now suppose that $x_n=\af_n y_n+(1-\af_n) z_n\in T$ with $\af_n\in [0,1]$ and $y_n\in M_Y=\overline{{\rm conv}(Y)}$
and $z_n\in M_Z=\overline{{\rm conv}(Z)},$ $n\in \N,$ and  
$x_n\to \af y+(1-\af)z\in T,$ where $\af\in [0,1],$ $y\in \overline{{\rm conv}(Y)}$ and $z\in \overline{{\rm conv}(Z)}.$
We will show that $\af_n\to \af, $ $y_n\to y$ and $z_n\to z.$ 

Otherwise there is a subsequence $\{n_k\}\subset \N$ such that
\beq\label{Lclaimaff-2}
{\rm dist}(z_{n_k}, z)\ge \sigma
\eneq
for some $\sigma>0.$ We may assume that $\af_{n_k}\to \af_0$ for some $\af_0\in [0,1].$ 
Since $T$ is compact, both $\overline{{\rm conv}(Y)}$ and $\overline{{\rm conv}(Z)}$ are compact. 
Hence we may assume that $z_{n_k}\to z_0\in \overline{{\rm conv}(Z)}$ and $y_{n_k}\to y_0\in \overline{{\rm conv}(Z)}.$
Then 
\beq
\af_{n_k}y_{n_k}+(1-\af_{n_k})z_{n_k}\to \af_0 y_0+(1-\af)z_0.
\eneq
It follows that 
\beq
\af_0y_0+(1-\af_0)z_0=\af y+(1-\af)z_0.
\eneq
Note that $y_0, y\in M_Y$ and $z, z_0\in M_Z.$ By the conclusion of the first paragraph 
of the proof, $\af_0=\af,$ $y_0=y$ and $z=z_0.$ 
This leads a contradiction (to \eqref{Lclaimaff-2}) which proves the lemma.
\end{proof}

\begin{cor}\label{Laff}
Let $T$ be a Choquet simplex such that
 (1) $\partial_e(T)=Y\cup Z$ and $Y\cap Z=\emptyset,$ 
 (2)   
 $Y$ $Z$ are Borel and,
 (3)   $M_Y=
 \overline{{\rm conv}(Y)}$ and  $M_Z= \overline{{\rm conv}(Z)}.$

Then there is  $f\in \Aff(T)$  with $0\le f\le 1$ such that
$f|_{M_Y}=1$ and $f|_{M_Z}=0.$
\end{cor}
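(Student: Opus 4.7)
The plan is to define $f(t) := \mu_t(Y)$, where $\mu_t$ is the unique Choquet representing measure of $t$ on $\partial_e(T)$, and then verify that this $f$ does the job. Equivalently, by the first paragraph of the proof of Lemma \ref{Lclaimaff}, every $t \in T$ admits a unique decomposition $t = \alpha \mu_y + (1-\alpha)\mu_z$ with $\alpha \in [0,1]$, $\mu_y \in M_Y$, and $\mu_z \in M_Z$, and we set $f(t) = \alpha$. Well-definedness and the bounds $0 \le f \le 1$ are immediate from uniqueness. Moreover, any $t \in M_Y$ corresponds to $\alpha = 1$ and any $t \in M_Z$ to $\alpha = 0$, so the boundary conditions $f|_{M_Y} = 1$ and $f|_{M_Z} = 0$ come for free.

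Next I would check that $f$ is affine. Given $t_1, t_2 \in T$ with decompositions $t_i = \alpha_i y_i + (1-\alpha_i) z_i$ (where $y_i \in M_Y$, $z_i \in M_Z$) and $\lambda \in [0,1]$, set $\beta = \lambda \alpha_1 + (1-\lambda)\alpha_2$. When $0 < \beta < 1$, the convex combinations
\[
y = \frac{\lambda \alpha_1 y_1 + (1-\lambda)\alpha_2 y_2}{\beta} \in M_Y, \qquad z = \frac{\lambda(1-\alpha_1) z_1 + (1-\lambda)(1-\alpha_2)z_2}{1-\beta} \in M_Z
\]
(which lie in $M_Y$ and $M_Z$ by convexity of these sets) give $\lambda t_1 + (1-\lambda)t_2 = \beta y + (1-\beta)z$. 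By the uniqueness of the decomposition, $f(\lambda t_1 + (1-\lambda) t_2) = \beta = \lambda f(t_1) + (1-\lambda) f(t_2)$; the degenerate cases $\beta \in \{0,1\}$ are handled analogously.

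For continuity, which is where I expect the only real subtlety, I would appeal directly to Lemma \ref{Lclaimaff}. Since $T$ is metrizable and compact, it suffices to consider a sequence $t_n \to t$ with decompositions $t_n = \alpha_n y_n + (1-\alpha_n) z_n$ and $t = \alpha y + (1-\alpha) z$. The lemma asserts precisely that $\alpha_n \to \alpha$ (as well as $y_n \to y$, $z_n \to z$), hence $f(t_n) \to f(t)$. Combined with affineness, this places $f \in \mathrm{Aff}(T)$, completing the proof. The main obstacle, namely the continuous dependence of the barycentric coefficient $\alpha$ on $t$, has already been absorbed into Lemma \ref{Lclaimaff}; once invoked, the corollary reduces to an elementary bookkeeping argument.
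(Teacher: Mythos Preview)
Your proof is correct and follows essentially the same approach as the paper: both define $f(t)$ as the coefficient $\alpha$ in the unique decomposition $t = \alpha y + (1-\alpha)z$ (equivalently, $f(t) = \mu_t(Y) = \int_{\partial_e(T)} \mathbf{1}_Y\, d\mu_t$), verify the boundary values, and invoke Lemma~\ref{Lclaimaff} for continuity. Your argument is in fact slightly more explicit about affineness than the paper, which simply asserts it.
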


\begin{proof}
By (1) and  (3), $\overline{Y}\cap \overline{Z}=\emptyset$ (in $T$). 
Define a real affine function $f$  on $T$ such that $f|_Y=1$ and $f_Z=0$ and, 
for each $t\in T,$  
\beq
f(t)=\int_{\partial_e(T)} f(x) d\mu_t.
\eneq
Clearly $f$ is affine (and continuous on $\partial_e(T)$). {{Note that
$f$ is merely an affine extension of $f|_{\partial_e(T)}.$ In fact, 
$f(t)=\mu_t(Y)$ for all $t\in T.$}}
Since $Y\cap Z=\emptyset,$ we also have $f|_{M_Y}=1$ and $f|_{M_Z}=0$

 The point of the proof is to  show that $f$ is continuous.
Let $\af_n\in [0,1],$ $y_n\in M_Y$ and $z_n\in M_Z$ such
that $\af_ny_n+(1-\af_n)z_n\to \af y+(1-\af)z,$ where $\af\in [0,1],$ $y\in M_Y$ and $z\in M_Z.$
Note that 
\beq
f(\af_n y_n+(1-\af_n)z_n)=\af_n\andeqn f(\af y+(1-\af)z)=\af.
\eneq
By Lemma \ref{Lclaimaff}, $\af_n\to \af.$ Therefore 
\beq
\lim_{n\to \infty}f(\af_n y_n+(1-\af_n)z_n)=f(\af y+(1-\af)z).
\eneq
Thus $f$ is continuous. The lemma follows.
\end{proof}

\begin{prop}\label{Plimits}
Let $T$ be a Choquet simplex with condition (C)
such that $\partial_e(T)=\cup_{n=1}^\infty X_n$ which satisfies (1) and (2) in \ref{Dc1}.
Suppose that $x_n\in M_{\cup_{i=n}^{m(n)}X_i}$ (for some $m(n)\ge n$) and $x_n\to x\in T$ (as $n\to\infty$).
Then $x\in M_{X_1}.$
\end{prop}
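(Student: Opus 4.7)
The plan is to construct, for each $k\geq 1$, a continuous affine function on $T$ that vanishes eventually along the sequence $\{x_n\}$ while integrating to the measure of $X_2\cup\cdots\cup X_{k+1}$. Taking $k\to\infty$ will then force $\mu_x$ to concentrate on $X_1$.

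First I would fix $k\geq 1$ and apply condition (C)(2) with the roles of ``$k$'' equal to $1$ and ``$m$'' equal to $k$ in the notation of Definition \ref{Dc1}. This gives $Y_k:=X_2\cup\cdots\cup X_{k+1}$ and $Z_k:=\partial_e(T)\setminus Y_k=X_1\cup X_{k+2}\cup X_{k+3}\cup\cdots$, with $M_{Y_k}=\overline{\mathrm{conv}(Y_k)}$ and $M_{Z_k}=\overline{\mathrm{conv}(Z_k)}$. The disjoint Borel decomposition $\partial_e(T)=Y_k\sqcup Z_k$ together with these closures hypotheses is precisely the setup of Corollary \ref{Laff}, so I obtain $g_k\in\mathrm{Aff}(T)$ with $0\leq g_k\leq 1$, $g_k|_{M_{Y_k}}\equiv 1$, and $g_k|_{M_{Z_k}}\equiv 0$.

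Next, for every $n\geq k+2$ the index set $\{n,n+1,\ldots,m(n)\}$ is disjoint from $\{2,\ldots,k+1\}$, so $\bigcup_{i=n}^{m(n)}X_i\subseteq X_{k+2}\cup X_{k+3}\cup\cdots\subseteq Z_k$. Hence $x_n\in M_{\cup_{i=n}^{m(n)}X_i}\subseteq M_{Z_k}$ for all such $n$, which gives $g_k(x_n)=0$. By continuity of $g_k$ and $x_n\to x$, this yields $g_k(x)=0$.

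Finally, note that the restriction of $g_k$ to $\partial_e(T)$ is the indicator function $\mathbf{1}_{Y_k}$ (since $g_k=1$ on $Y_k\subseteq M_{Y_k}$ and $g_k=0$ on $Z_k\subseteq M_{Z_k}$), so the Choquet barycentric representation gives
\[
0=g_k(x)=\int_{\partial_e(T)} g_k\,d\mu_x=\mu_x(X_2\cup\cdots\cup X_{k+1}).
\]
Letting $k\to\infty$ and using countable additivity, $\mu_x\bigl(\bigcup_{N\geq 2}X_N\bigr)=0$, hence $\mu_x$ is concentrated on $X_1$ and $x\in M_{X_1}$. There is no real obstacle here; the work has already been done in establishing Corollary \ref{Laff}, and this proof simply applies it along the natural ``telescoping'' of the decomposition $\partial_e(T)=\bigcup_n X_n$ provided by condition (C).
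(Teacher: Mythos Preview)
Your proof is correct and follows essentially the same strategy as the paper: for each $m$, show that the boundary measure $\mu_x$ vanishes on $X_2\cup\cdots\cup X_{m+1}$, then let $m\to\infty$. The only difference is in how the vanishing is obtained. You invoke Corollary~\ref{Laff} to manufacture a separating affine function $g_k$ and then use the barycentric formula to read off $\mu_x(Y_k)=0$. The paper avoids Corollary~\ref{Laff} entirely: since condition~(C)(2) gives $M_{Z_m}=\overline{\mathrm{conv}(Z_m)}$, the set $M_{Z_m}$ is \emph{closed}, so $x_n\in M_{Z_m}$ for $n>m$ and $x_n\to x$ immediately give $x\in M_{Z_m}$, i.e.\ $\mu_x|_{Y_m}=0$. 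Your route works but is a detour; the paper's argument is the two-line version of what your $g_k$ is doing implicitly.
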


\begin{proof}
Put $X=\partial_e(T)$ and fix
$m\in \N\setminus \{1\}.$ 
Since $(\cup_{i=n}^{m(n)}X_i)\cap (\cup_{i=2}^m X_i)=\emptyset$ when $n>m,$ 
$x_n\in \overline{{\rm conv}(X\setminus \cup_{i=2}^mX_i)}$ 
for all $n>m.$
It follows that $x\in \overline{{\rm conv}(X\setminus \cup_{i=2}^mX_i)}.$
Put $Z_m=X\setminus \cup_{i=2}^mX_i$ and $Y_m=\cup_{i=2}^mX_i.$  Then, by \ref{Dc1}, 
$x\in M_{Z_m}.$  It follows that the extremal  boundary (Borel probability) measure 
$\mu_x$ concentrates on $Z_m.$ In other words, ${\mu_x}|_{Y_m}=0.$
This holds for each $m.$ Therefore $x\in M_{X_1}.$
\end{proof}

In the case that $A$ is not unital, we will consider 
the case that  the cone ${\wtd{T}}(A)$  has a basis $S$ which is a Choquet simplex satisfying 
condition (C).
We would like to point out  that this condition does not depend on the choice of basis $S.$ 
This is clarified by the following proposition.

\begin{prop}\label{Pextrb}
Let $A$ be a separable simple \CA\, with ${\wtd{T}}(A)\setminus \{0\}\not=\emptyset.$ 
Suppose that $S\subset \wtd{T}(A\otimes {\cal K})\setminus \{0\}$  is  a compact Choquet simplex which is also a  basis for the cone 
$\wtd{T}(A\otimes {\cal K}).$ 
Suppose that $e\in 
{\rm Ped}(A\otimes {\cal K})\setminus \{0\}$ with $0\le e\le 1$ and 
$T_e=\{\tau\in \wtd{T}(A\otimes {\cal K}): \tau(e)=1\}.$ 

(1) Then there is a homeomorphism (but not necessarily affine) $\gamma$ from $S$ onto $T_e$ 
which maps $\partial_e(S)$ onto $\partial_e(T_e).$

(2) $S$ satisfies condition (C) if and only if $T_e$ satisfies condition (C). 

(3) If $S$ is a Bauer simplex, so is $T_e.$  Moreover, in this case, there is 
an affine homeomorphism $\gamma: S\to T_e$ 
such that
\beq
\gamma(\tau)(a)={\tau(a)\over{\tau(e)}}\tforal a\in {\rm Ped}(A\otimes {\cal K})\tand 
\tau\in \partial_e(S).
\eneq

\end{prop}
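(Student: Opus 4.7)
For part (1), I would set $\gamma(\tau)=\tau/\tau(e)$. The evaluation $\tau\mapsto \tau(e)$ is continuous on $\wtd T(A\otimes {\cal K})$ because $e\in {\rm Ped}(A\otimes {\cal K})$, and it is strictly positive on $\wtd T(A\otimes {\cal K})\setminus\{0\}$ because $e$ is full in the simple algebra $A\otimes {\cal K}$ (a short Cauchy--Schwarz/simplicity argument rules out $\tau(e)=0$ for any nonzero $\tau$). So $\gamma:S\to T_e$ is well-defined and continuous. Since $S$ is also a basis of the cone, there is a continuous positive affine functional $L_S$ on $\wtd T(A\otimes {\cal K})$ with $S=\{L_S=1\}$; then $\gamma^{-1}(t)=t/L_S(t)$ is continuous. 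Extreme rays of the cone meet each transversal in its extreme points, so $\gamma(\partial_e(S))=\partial_e(T_e)$.

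For part (2), set $Y_n:=\gamma(X_n)$; each is compact of the same covering dimension as $X_n$ (homeomorphism), and together they cover $\partial_e(T_e)$. The first equation of Condition (C)(2) is automatic for compact sets, as recorded after Definition \ref{Dc1}. For any Borel $E\subset \partial_e(T)$, the inclusion $\mathrm{conv}(E)\subset M_E\subset \overline{\mathrm{conv}(E)}$ shows that the second equation is equivalent to $M_E$ being closed. So the task is to show, for every Borel $E_S\subset \partial_e(S)$, that $\gamma$ maps $M_{E_S}\subset S$ homeomorphically onto $M_{\gamma(E_S)}\subset T_e$.

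For that correspondence, I would extend each $f\in \Aff(T_e)$ to a continuous linear functional $\hat f$ on the cone via positive homogeneity (legitimate since $T_e$ is a transversal of the cone); the identity $\sigma=\sigma(e)\gamma(\sigma)$ then gives $\hat f(\sigma)=\sigma(e)f(\gamma(\sigma))$ for $\sigma\in S$. Given $t\in S$ with boundary measure $\mu_t$, define the probability measure $d\tilde\mu=(\sigma(e)/t(e))\,d\mu_t$ on $\partial_e(S)$; a short computation yields
\beq\nonumber
\int f\,d(\gamma_*\tilde\mu)=\int f(\gamma(\sigma))\frac{\sigma(e)}{t(e)}\,d\mu_t=\frac{\hat f(t)}{t(e)}=f(\gamma(t))\rforal f\in \Aff(T_e).
\eneq
Uniqueness of boundary measures in the Choquet simplex $T_e$ identifies the boundary measure of $\gamma(t)$ as $\gamma_*\tilde\mu$, which has the same null sets as $\gamma_*\mu_t$ because the density $\sigma(e)/t(e)$ is strictly positive. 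Thus $\mu_t$ concentrates on $E_S$ iff the boundary measure of $\gamma(t)$ concentrates on $\gamma(E_S)$, giving $\gamma(M_{E_S})=M_{\gamma(E_S)}$, and closedness transfers through the homeomorphism $\gamma$.

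For part (3), if $S$ is Bauer then $\partial_e(S)$ is closed (hence compact) in $S$, so $\partial_e(T_e)=\gamma(\partial_e(S))$ is compact in $T_e$ and $T_e$ is Bauer. Identifying each Bauer simplex with its space of Borel probability measures on its extremal boundary via barycenters, the push-forward $\tilde\gamma(\mu):=\gamma_*\mu$ is affine, weak-$*$ continuous, and has inverse $(\gamma^{-1})_*$, hence an affine homeomorphism from $S$ to $T_e$. On an extreme point $\tau\in \partial_e(S)$, $\tilde\gamma(\tau)$ corresponds to $\gamma_*\delta_\tau=\delta_{\gamma(\tau)}$, i.e.\ $\tilde\gamma(\tau)=\gamma(\tau)=\tau/\tau(e)$, yielding the asserted formula. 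The main obstacle is the boundary measure calculation in (2): since $\gamma$ is not affine, barycentric decompositions do not transport directly, and the argument depends both on lifting $\Aff(T_e)$ to linear functionals on the cone and on the reformulation of Condition (C)(2) as mere closedness of $M_E$, which is what lets a non-affine homeomorphism carry the condition from one basis to the other.
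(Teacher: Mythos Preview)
Your proposal is correct and follows essentially the same route as the paper: the same map $\gamma(\tau)=\tau/\tau(e)$ for (1), the same push-forward-of-measures construction for the affine homeomorphism in (3), and the same underlying idea in (2) that $\gamma$ carries $M_K$ to $M_{\gamma(K)}$ so that condition (C) transfers. The one noteworthy difference is in the execution of (2): the paper first shows $\gamma(\mathrm{conv}(K))=\mathrm{conv}(\gamma(K))$ by an explicit computation of how convex coefficients transform, and then invokes $\gamma(M_K)\subset M_{\gamma(K)}$ with only a one-line ``from measure theory'' remark; you instead reformulate condition (C)(2) as closedness of $M_E$ and prove $\gamma(M_{E})=M_{\gamma(E)}$ directly via the density-corrected boundary measure $d\tilde\mu=(\sigma(e)/t(e))\,d\mu_t$. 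Your packaging is slightly cleaner and makes explicit the step the paper leaves as a remark, but the two arguments are the same in substance.
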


\begin{proof}
Let $\gamma: S\to T_e$ be defined by
$\gamma(s)(a)={s(a)\over{s(e)}}$ for all $a\in {\rm Ped}(A
\otimes {\cal K})$ and $s\in S.$
It is an injective continuous map (recall that $A$ is simple, so $s(e)>0$).
 To see it is surjective, 
let $\tau\in T_e.$ Then, since $S$ is a basis for the cone $\wtd{T}(A),$ 
there is a unique $r\in \R_+\setminus \{0\}$ such that $r\cdot \tau\in S.$
Thus 
\beq
\gamma(r\cdot \tau)(a)={r\cdot \tau(a)\over{r\cdot \tau(e)}}={\tau(a)\over{\tau(e)}}=\tau(a)
\eneq
for all $a\in {\rm Ped}(A\otimes {\cal K}).$ This shows that $\gamma$ is  surjective 
and hence a homeomorphism (between compact Hausdorff spaces).

To see $\gamma$ maps $\partial_e(S)$ to $\partial_e(T_e),$ 
let $\tau\in \partial_e(S).$ Suppose that there are $\tau_1, \tau_2\in T_e$ such that
$\gamma(\tau)=\af\tau_1+\bt \tau_2,$ where $0\le \af,\, \bt<1$ and $\af+\bt=1.$ 

Define $f_S\in {\rm Aff}(\wtd T(A\otimes {\cal K}))$ by $f_S(r\cdot s)=r$ for all $r\in \R$ and $s\in S.$
Recall that $S$ is a basis for the cone $\wtd{T}(A).$
In particular, if $s_1, s_2\in S$ and $\af' s_1+\bt' s_2=s\in S$ for some 
$\af', \bt'>0,$ then $\af'+\bt' =1.$ 
To see this, let $\dt=\af'+\bt'>0.$ Then $(\af'/\dt)s_1+(\bt'/\dt)s_2=s/\dt.$
Since $S$ is convex, $s_3=s/\dt\in S.$ Since $S$ is a basis, $\dt=1.$ 
From what we have  just verified, 
$f_S$ is a well-defined real continuous affine function, and 
$t\in S$ if and only if $f_S(t)=1.$  Moreover, 
$f_S(\tau)>0,$ for any $\tau \in \wtd{T}(A\otimes {\cal K})\setminus \{0\}.$

Put $s(a)={\gamma(\tau)(a)\over{f_S(\gamma(\tau))}}$ for $a\in {\rm Ped}(A\otimes {\cal K}).$ 
Then $s\in S.$  However,  since $\gamma(s)(e)=1=\gamma(\tau)(e),$ 
\beq
\gamma(s)(a)={\gamma(\tau)(a)\over{f_S(\gamma(\tau))s(e)}}=
{\gamma(\tau)(a)\over{f_S(\gamma(\tau))({\gamma(\tau)(e)\over{f_S(\gamma(\tau))}})}}
=\gamma(\tau)(a)
\rforal a\in {\rm Ped}(A\otimes {\cal K}).
%
\eneq
Since $\gamma$ is bijective,  $s=\tau.$


On the other hand, define  $t_i(a)=\tau_i(a)/s_i(f_S)$ for all $a\in {\rm Ped}(A\otimes {\cal K})$ and $i=1,2.$ 
Put $\lambda=\af f_S(\tau_1)+\bt f_S(\tau).$   Then
$$
s=((\af\tau_1+\bt \tau_2)/f_S(\af \tau_1+\bt \tau_2))=(\af\tau_1+\bt \tau_2)/\lambda=({\af f_S(\tau_1)\over{\lambda}})t_1+({\bt f_S(\tau_2)\over{\lambda}})t_2.
$$
Since $s=\tau$ is  assumed to be in  $\partial_e(S)$ and  
$({\af\tau_1(f_S)\over{\lambda}})+({\bt\tau_2(f_S)\over{\lambda}})=1,$
either 
$$
{\af f_S(\tau_1)\over{\lambda}}=0,\,\,\, \text{or} \,\,\, {\bt\ f_S(\tau_2)\over{\lambda}}=0,
$$
which forces either $\af=0$ or $\bt=0.$  This proves  the fact that $\gamma|_{\partial_e(S)}$ maps $\partial_e(S)$ to 
$\partial_e(T_e).$ 
Note that $\gamma^{-1}(t)(a)={t(a)\over{f_S(t)}}$ for all $t\in T_e$ and $a\in {\rm Ped}(A\otimes {\cal K}).$
Similar argument shows that $\gamma^{-1}$ maps 
{{$\partial_e(T_e)$ to $\partial_e(S).$}} This implies that $\gamma|_{\partial_e(S)}$ is a homeomorphism 
from $\partial_e(S)$ onto $\partial_e(T_e).$ 
This proves the first part of the proposition.

For part (2),  let $K$ be a Borel subset of $\partial_e(S).$ 
Note that, if $\tau_1, \tau_2\in K$ and $\af\in [0,1],$
then, with $\tau=\af\tau_1+(1-\af)\tau_2,$ 
\beq\nonumber
&&\gamma(\af\tau_1+(1-\af)\tau_2)={\af\tau_1+(1-\af)\tau_2\over{\af\tau_1(e)+(1-\af)\tau_2(e)}}
\\\nonumber
&&=({\af \tau_1(e) \over{\tau(e)}})({\tau_1\over{\tau_1(e)}})+({(1-\af) \tau_2(e) \over{\tau(e)}})({\tau_2\over{\tau_2(e)}})=
({\af \tau_1(e) \over{\tau(e)}})\gamma(\tau_1)+({(1-\af) \tau_2(e) \over{\tau(e)}})\gamma(\tau_2).
\eneq
Since $\gamma(\tau)(e)=1,$  we have
\beq
1=\gamma(\af\tau_1+(1-\af)\tau_2)(e)=({\af \tau_1(e) \over{\tau(e)}})+({(1-\af) \tau_2(e) \over{\tau(e)}}).
\eneq
This shows that $\gamma$ maps ${\rm conv}(K)$ into ${\rm conv}(\gamma(K)).$
In fact,  
since $\gamma(\tau)={\tau\over{\tau(e)}},$ from measure theory, we also have 
$\gamma(M_K)\subset M_{\gamma(K)}.$
Similarly, replacing $\hat{e}$ by $f_S$ (in the proof of the first part),  the same lines of the argument shows that 
$\gamma^{-1}$ maps ${\rm conv}(\gamma(K))$ into ${\rm conv}(K).$
It follows that $\gamma$ maps ${\rm conv}(K)$ onto ${\rm conv}(\gamma(K)).$
Since $\gamma$ is a homeomorphism, 
$
\gamma
$
maps $\overline{{\rm conv}(K)}$ onto $\overline{{\rm conv}(\gamma(K))}.$
Since ${\rm conv}(\gamma(K))\subset M_{\gamma(K)}\subset \overline{{\rm conv}(\gamma(K))},$ 
if $M_K=\overline{{\rm conv}(K)},$ then $M_{\gamma(K)}=\overline{{\rm conv}(\gamma(K))}.$

Now if $S$ satisfies condition (C), 
we may write  $S=\cup_{n=1}^\infty X_n$ which satisfies (1) and (2) in \ref{Dc1}.
Then $T_e=\cup_{n=1}^\infty \gamma(X_n),$  {{$\gamma(X_i)\cap \gamma(X_j)=\emptyset,$
if $i,j\ge 2$ and $i\not=j,$}}
and each $\gamma(X_n)$ is compact and 
has finite covering dimension (as  $X_n$ is compact and has finite covering dimension). 
From what has been shown,  if $k, m\in \N,$
\beq\nonumber
\overline{{\rm conv}(\cup_{j=1}^m \gamma(X_{j+k})) }=M_{\gamma(\cup_{j=1}^m X_{j+k})}\andeqn 
\overline{{\rm conv}(X\setminus \cup_{j=1}^m \gamma(X_{j+k}))}=M_{T_e\setminus \cup_{j=1}^m \gamma(X_{j+k})}.
\eneq
Thus $T_e$ satisfies condition (C).  The same argument shows that, if $T_e$ satisfies condition (C) so does 
$S.$


For (3), let us assume that  $S$ is a Bauer simplex, i.e., $\partial_e(S)$ is compact. By the first part
of the proposition, so is $\partial_e(T_e).$

For each $\tau\in S,$ by the  Choquet theorem, there is a unique  probability Borel measure 
$\mu_\tau$ on $\partial_e(S)$ such that 
$$
f(\tau)=\int_{\partial_e(S)} fd\mu_\tau\rforal f\in \Aff(S).
$$
Define $\kappa(\mu_\tau)(F)=\mu_\tau(\gamma^{-1}(F))$ for any Borel subset $F\subset \partial_e(T_e).$ 
For each $g\in \Aff(T_e),$ $g|_{\partial_e(T_e)}$ is continuous.
For each $\tau\in S,$ define
\beq\label{Pbauer-1}
g(\wtd\gamma(\tau))=\int_{\partial_e(T_e))} g d\kappa(\mu_\tau)\rforal g\in \Aff(T_e).
\eneq
Since $T_e$ is a Bauer simplex,  the formula above gives 
the unique continuous affine extension of $g|_{\partial_e(T_e)}.$
  Note that the map
$\gamma^\sharp: \Aff(T_e)\to \Aff(S)$ by 
\beq
\gamma^{\sharp}(f)=\int_{\partial_e(S)} f\circ \gamma d\mu_\tau\rforal f\in \Aff(T_e)
\eneq
is continuous in weak-* topology of $\Aff(T_e).$ It follows  that $\wtd\gamma(\tau)\in T_e$ for each $\tau\in S$ and 
that the map 
$\wtd \gamma: S\to T_e$ defined by $\tau\mapsto  \wtd\gamma(\tau)$ is continuous and affine.
Since $\wtd \gamma|_{\partial_e(S)}=\gamma|_{\partial_e(S)}$ is a homeomorphism,    $\wtd \gamma$ maps $S$ onto $T_e,$ 
which is 
an affine homeomorphism. 
\end{proof}

\begin{df}\label{DconC}
Let $A$ be a separable \CA\, such that 
whose Pedersen ideal contains a full element (this includes the case that $A$ is unital).
 We say that extremal boundaries of the tracial cone of $A$ satisfies condition 
(C) if  $\wtd{T}(A)\setminus \{0\}\not=\emptyset$ and 
one of bases $S$ of $\wtd{T}(A)\setminus \{0\}$  
satisfies condition (C). Note that in this case, $\wtd{T}(A)$ always has a Choquet simplex as 
a base for the cone $\wtd{T}(A)$ (see Proposition 3.4 of \cite{TT}).
\end{df}

\subsection{Oscillation and orthogonal complements}\label{sub2}

\begin{df}[see  Definition A.1 of \cite{eglnkk0}]\label{DefOS1}
Let $A$ be a \CA\,  
Let $S\subset
 {\wtd{QT}}(A)
\setminus \{0\}$ be a compact subset. 
Define, for each $a\in {\rm Ped}(A\otimes {\cal K})_+,$ 
\beq
\omega(a)|_S&=&\inf\{\sup\{d_\tau(a)-\tau(c): \tau\in S\}: c\in \overline{a(A\otimes {\cal K})a}, \,0\le c\le 1\}.
\eneq

Recall (from Theorem 4.7 of \cite{eglnp}) that a $\sigma$-unital \CA\, $A$ is called compact, if ${\rm Ped}(A)=A.$
Then, by Lemma 4.5 of \cite{eglnp}, $0\not\in \Qw.$ 
If $A$ is compact,  we may choose $S=\Qw.$
In that case, we will omit $S$ in the notation.
Note that $\omega(a)|_S=0$ if and only if $d_\tau(a)$ is continuous on $S.$

Let $A$ be a $\sigma$-unital simple \CA\, with $\wtd{QT}(A)\not=\{0\}.$
Let $e\in A$ be a strictly positive element.
If $A$ has continuous scale,  then $\omega(e)=0$ and 
$QT(A)$ is compact (see, for example, Proposition 5.4 of \cite{eglnp}).
  If $A$ also has strict comparison, then $A$ has continuous scale 
 if and only if $\omega(e)=0$ (see Proposition 5.4 and Theorem 5.3 of \cite{eglnp}). 
 
 Note that, if $A$ is compact and $\wtd{QT}(A)\setminus \{0\}\not=\emptyset,$
 then $S:=\Qw$ is a compact subset of $\wtd{QT}(A)\setminus \{0\}$ and $\R_+\cdot S=\wtd{QT}(A).$
\end{df}

\begin{df}[Definition 4.7 of \cite{FLosc}]\label{defOs2}
Let $A$ be a  $\sigma$-unital \CA\, with $\wtd{QT}(A)\setminus \{0\}\not=\emptyset$ and $S\subset \wtd{QT}(A)\setminus \{0\}$  be a compact subset such that $\R_+\cdot S=\wtd{QT}(A)$
(such as unital \CA s $A$ with $QT(A)\not=\emptyset$).
%
Let $a\in {\rm Ped}(A\otimes {\cal K})_+$ and let
$\Pi: l^\infty(A)\to l^\infty(A)/I_{_{S,\N}}$
be the quotient map. Define
\beq\nonumber
\Omega^T(a)|_S=\inf\{\|\Pi(\iota(a)-\{b_n\})\|: b_n\in \Her(a_n)_+, \|b_n\|\le \|a\|, \,\, \lim_{n\to\infty}\omega(b_n)|_
{S}=0\}.
\eneq
One may call $\Omega^T(a)|_S$ the tracial approximate oscillation of $a.$
If $\Omega^T(a)|_S=0,$ we say  that the element $a$ has  approximately tracial  oscillation zero.
Note that $\Omega^T(a)|_S=0$ if and only if there exists $b_n\in \Her(a)_+$ with $\|b_n\|\le \|a\|$ such that  
\beq
\lim_{n\to\infty}\|a-b_n\|_{_{2, S}}=0\andeqn\lim_{n\to\infty}\omega(b_n)=0
\eneq
(see   Proposition 4.8 
 of \cite{FLosc}).   It follows from Proposition 4.9  of \cite{FLosc} that 
that $a$ has approximately tracial oscillation zero does 
 not depend on the choice of $S$ (see also (2) of Proposition 4.8 of \cite{FLosc}). So under the assumption above on $A,$ 
 we may write $\Omega^T(a)=0$ (instead of $\Omega^T(a)|_S=0$).

We say that $A$ has tracial approximate oscillation zero, if $\Omega^T(a)=0$ 
for all $a\in  {\rm Ped}(A\otimes {\cal K})_+$  (see Definition 5.1 of \cite{FLosc}).
If we view $\|\cdot \|_{_{2, S}}$ as an  $L^2$-norm, 
 the condition of $\Omega^T(a)=0$ has an analogue  to that ``almost'' continuous functions are $L^2$-norm dense.
 
 Let $A$ be a $\sigma$-unital simple \CA\,
 with $\wtd{QT}(A)\setminus \{0\}\not=\emptyset.$ 
 Choose any $a\in {\rm Ped}(A\otimes {\cal K})_+\setminus \{0\}.$
 Then $\Her(a)$ is compact. Hence  $S:=\overline{QT(\Her(a))}^w$ is a compact
 subset of $\wtd{QT}(\Her(a))\setminus \{0\}$ and 
 $\R\cdot S=\wtd{QT}(A).$ Note that, by \cite{Br}, $\Her(a)\otimes {\cal K}\cong A\otimes {\cal K}.$ 
 In other words, $A$ is stably isomorphic to an algebraically simple \CA. 
 Therefore, in  this case, we often assume that $A$ is also compact, we choose $S=\Qw.$  



We will use the following fact: If $A$ is a separable simple \CA\, with $\wtd{QT}(A)\not=\{0\}$ and 
the extremal boundary $\partial_e(S)$ of a basis $S$ for the cone $\wtd{QT}(A)$ has only countably many points,
then $A$ has tracial approximate oscillation zero (Theorem 5.9  of \cite{FLosc}). 
\end{df}

\begin{df}\label{Ppermproj}
Let $\varpi\in \bt(\N)\setminus \N$ be a free ultrafilter.
Let $p\in l^\infty(A)/I_{_{\Qw, \varpi}}$ be a projection. An element $\{e_n\}\in l^\infty(A)_+^{\bf 1}$ is called 
a permanent projection lifting of $p,$ if, for any sequence of integers $\{m(n)\},$
$\Pi_\varpi(\{e_n^{1/m(n)}\})=p.$
\end{df}

\begin{prop}[Proposition 6.2 of \cite{FLosc}]\label{Dpproj}

Let $A$ be a separable  compact \CA\, (see \ref{DefOS1}) with $T(A)\not=\emptyset$ and
let $\{e_n\}\in  l^\infty(A)_+^{\bf 1}.$ 
Suppose that $\varpi\in \bt(\N)\setminus \N$ is a free ultrafilter.

(1)  Suppose that $p=\Pi_\varpi(\{e_n\})$ is a projection. Then $\{f_{\dt}(e_n)\}$ is a permanent projection lifting of $p$ for any $0<\dt<1/2$
and  
\beq\label{Dperm-1}
\lim_{n\to\varpi}\sup\{\tau((e_n-f_\dt(e_n)e_n)): \tau\in \Qw\}=0.\,\,\,
\eneq

(2)    The following are equivalent:

(i)  $\{e_n\}$ is a permanent projection lifting of $p=\Pi_\varpi(\{e_n\}),$ 

(ii)  
$\lim_{n\to\varpi} \sup\{d_\tau(e_n)-\tau(e_n^2):\tau\in \Qw\}=0,$

(iii)  for all $\dt\in (0,1/2),$
$$\lim_{n\to\varpi} \sup\{d_\tau(e_n)-\tau(f_\dt(e_n)):\tau\in \Qw\}=0.$$

\end{prop}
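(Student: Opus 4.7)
My plan is to translate each of (i), (ii), (iii) into statements about the scalar spectral measures $\mu_{\tau,n}$ of $e_n$ (with $\tau\in\Qw$, supported in $[0,1]$), and then reduce all three to the single concentration estimate
\[
\lim_{n\to\vo}\,\sup_{\tau\in\Qw}\,\mu_{\tau,n}((0,1-\ep)) = 0\quad\text{for every } \ep\in(0,1).
\]
Throughout part (2), I regard $p=\Pi_\vo(\{e_n\})$ as a projection: this is implicit in (i), and follows from (ii) via $\tau(e_n-e_n^2)\le d_\tau(e_n)-\tau(e_n^2)$ together with $(e_n(1-e_n))^2\le e_n(1-e_n)$. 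Consequently, $\sup_\tau\mu_{\tau,n}((\eta,1-\eta))\to 0$ for every $\eta>0$ is available as the \emph{middle-spectrum} bound.

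For part (1), first $\Pi_\vo(\{f_\dt(e_n)\}) = f_\dt(p)=p$ by continuous functional calculus in the quotient C*-algebra, since $f_\dt(0)=0$ and $f_\dt(1)=1$ whenever $\dt<1/2$. The trace estimate \eqref{Dperm-1} follows from the pointwise inequality $t(1-f_\dt(t))\le 2t(1-t)$ on $[0,1]$ (trivial for $t\ge\dt$; for $t<\dt<1/2$, $1-f_\dt(t)\le 1\le 2(1-t)$), giving $e_n-f_\dt(e_n)e_n\le 2(e_n-e_n^2)$ and hence a uniform trace bound. For the permanent-lifting assertion, observe that for any sequence $\{m(n)\}$, the function $f_\dt(t)^{1/m(n)}-f_\dt(t)$ of $e_n$ is spectrally supported in $[\dt/2,\dt]$ (since $f_\dt$ is $0$ on $[0,\dt/2]$ and $1$ on $[\dt,\infty)$). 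Setting $c:=\min\{t(1-t):t\in[\dt/2,\dt]\}>0$, one has $\mu_{\tau,n}([\dt/2,\dt])\le c^{-1}\tau(e_n-e_n^2)$, which $\to 0$ uniformly in $\tau$, giving $\|f_\dt(e_n)^{1/m(n)}-f_\dt(e_n)\|_{2,\Qw}\to 0$ along $\vo$ and hence $\Pi_\vo(\{f_\dt(e_n)^{1/m(n)}\})=p$ for every $\{m(n)\}$.

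For part (2), the key preliminary step is a diagonal argument showing that (i) is equivalent to $\lim_{n\to\vo}\sup_m\|e_n^{1/m}-e_n\|_{2,\Qw}=0$: if the supremum did not tend to $0$ along $\vo$, one would choose $m(n)$ pointwise to almost realize it and contradict the permanent property. Since $(t-t^{1/m})^2$ is monotone increasing in $m$ on $[0,1]$ with supremum $(1-t)^2\chi_{(0,1]}(t)$, monotone convergence identifies $\sup_m\|e_n^{1/m}-e_n\|_{2,\Qw}^2 = \sup_\tau\int_{(0,1]}(1-t)^2\,d\mu_{\tau,n}$. Conditions (ii) and (iii) read directly as $\sup_\tau\int(1-t^2)\,d\mu_{\tau,n}\to 0$ and, for each $\dt\in(0,1/2)$, $\sup_\tau\int(1-f_\dt)\,d\mu_{\tau,n}\to 0$. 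Two-sided sandwich bounds with $\chi_{(0,1-\ep)}$ on $(0,1]$ now give the equivalence of each condition to the concentration estimate above: for (i) use $\ep^2\chi_{(0,1-\ep)}\le(1-t)^2\le\chi_{(0,1-\ep)}+\ep^2$; for (ii) use $(2\ep-\ep^2)\chi_{(0,1-\ep)}\le 1-t^2\le\chi_{(0,1-\ep)}+2\ep$; for (iii) use $\chi_{(0,\dt/2]}\le 1-f_\dt\le\chi_{(0,\dt)}$ combined with the middle-spectrum bound from $p$ being a projection.

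The main obstacle I expect is the (i)-reformulation: converting the universal quantifier ``for every sequence $\{m(n)\}$'' into a pointwise-in-$n$ supremum over $m$, and then interchanging this supremum with both the $\tau$-supremum and the monotone limit so as to write $\sup_m\|e_n^{1/m}-e_n\|_{2,\Qw}^2$ as a single spectral integral. After that step, the remaining work is routine spectral estimation on $(0,1]$, with the middle-spectrum bound supplied by the projection hypothesis.
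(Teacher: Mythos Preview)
Your proposal is correct and takes a genuinely different route from the paper. The paper argues the implications pairwise: for (ii)$\Rightarrow$(i) and (iii)$\Rightarrow$(i) it sandwiches $\Pi_\vo(\{e_n^{1/m(n)}\})$ between $p$ and $\Pi_\vo(\{e_n^{1/k(n)}\})$ for a suitably larger $k(n)$, while for (i)$\Rightarrow$(ii),(iii) it first proves by contradiction that $\lim_{n\to\vo}\omega(e_n)=0$ (manufacturing a bad sequence $\{m(n)\}$ from the failure of this limit) and then feeds this into direct trace estimates. You instead recast all three conditions as integrals against the scalar spectral measures $\mu_{\tau,n}$ and reduce each to the single concentration estimate $\sup_\tau\mu_{\tau,n}((0,1-\ep))\to 0$. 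Your diagonal argument converting (i) into $\lim_{n\to\vo}\sup_m\|e_n^{1/m}-e_n\|_{2,\Qw}=0$ is at heart the same device as the paper's contradiction, but packaged more transparently; once the spectral-measure language is in place, the two-sided sandwich bounds are mechanical. For part~(1) the paper is slightly slicker: rather than bounding $\mu_{\tau,n}([\dt/2,\dt])$ via $t(1-t)$, it simply notes $f_\dt(e_n)^{1/m}\le f_{\dt/2}(e_n)$ and sandwiches $p\le\Pi_\vo(\{f_\dt(e_n)^{1/m(n)}\})\le\Pi_\vo(\{f_{\dt/2}(e_n)\})=p$. Your approach buys uniformity and conceptual clarity across both parts; the paper's stays closer to the quotient algebra and never introduces an auxiliary ``concentration'' condition.

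One small remark: you correctly observe that the projection hypothesis on $p$ must be taken as standing throughout part~(2), since you need the middle-spectrum bound to pass from (iii) to the concentration estimate. This is the right reading---condition (iii) alone does not force $p$ to be a projection (e.g.\ $e_n\equiv e$ with spectrum $\{0,\tfrac12,1\}$ satisfies (iii) for every $\dt<\tfrac12$), so the equivalence only makes sense under that background assumption.
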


\begin{proof}

(1) Note that $\Pi(f_\dt(\{e_n\}))=f_\dt(\Pi(\{e_n\}))=p$ for any $0<\dt<1/2.$
Therefore $\Pi(f_{\dt/2}(\{e_n\})=p.$ Put $b_n=f_\dt(e_n),$ $n\in \N.$ 
For any  sequence of integers $\{m(n)\},$ 
$b_n^{1/m(n)}\le f_{\dt/2}(e_n),\,\,n\in \N.$
It follows that
\beq
p=\Pi_\varpi(\{f_\dt(e_n)\})\le \Pi_\varpi(\{b_n^{1/m(n)}\})\le \Pi_\varpi(\{f_{\dt/2}(e_n)\})=p.
\eneq
So $\{b_n\}$ is a permanent projection lifting of $p.$  Moreover
$(\{e_n\}-\{f_\dt(e_n)e_n\})^{1/2}\in I_{_{\Qw, \varpi}}$ 
and \eqref{Dperm-1} also holds.
This proves part  (1) of the lemma.

(2)   
((ii) $\Rightarrow$ (i))  If 
$$\lim_{n\to\varpi} \sup\{d_\tau(e_n)-\tau(e_n^2):\tau\in \Qw\}=0,$$
then $\{e_n\}-\{e_n^2\}\in I_{_{\Qw, \varpi}}.$ So $p=\Pi_\varpi(\{e_n\})$ is a 
projection. 
Morover, for any
$\{m(n)\},$ 
\beq
\sup\{\tau(e_n^{1/m(n)})-\tau(e_n^2): \tau\in \Qw\}\le \sup\{d_\tau(e_n)-\tau(e_n^2):\tau\in \Qw\}\to^\varpi 0.
\eneq
It follows that $\{e_n^{1/m(n)}-e_n^2\}\in I_{_{\Qw,\varpi}}.$   Then 
$\Pi_\varpi(\{e^{1/m(n)}\})
=\Pi_\varpi(\{e_n\}).$
Therefore 
$\{e_n\}$
is a permanent projection lifting of $p.$

((iii) $\Rightarrow$ (i)) Assume, for any $\dt\in (0,1/2),$ 
\beq\label{Dpproj-2}
\lim_{n\to\varpi} \sup\{d_\tau(e_n)-\tau(f_\dt(e_n)):\tau\in \Qw\}=0.
\eneq

Let $m(n)\in \N.$  
There is, for each $n,$   an integer $k(n)\ge m(n)$ such that
\beq\label{Dpproj-1}
\|e_n^{1/k(n)}f_\dt(e_n)-f_\dt(e_n)\|<(1/2n)^2.
\eneq
Put $c_n=e_n^{1/k(n)}f_\dt(e_n)-f_\dt(e_n).$ Then $\Pi_\varpi(\{c_n\})=0.$
It then follows that
\beq\label{225-1}
\Pi_\varpi(\{e_n^{1/k(n)}\}-\{f_\dt(e_n)\})=\Pi_\varpi(\{e_n^{1/k(n)}-e_n^{1/k(n)}f_\dt(e_n)\}).
\eneq
However, for each $\tau\in \Tw,$
\beq
&&\hspace{-1in}\tau((e_n^{1/k(n)}-e_n^{1/k(n)}f_\dt(e_n))^2)\le \tau((e_n^{1/k(n)}- e_n^{1/k(n)}f_\dt(e_n)))\\
&&\le d_\tau(e_n)- \tau(e^{1/k(n)}f_\dt(e_n))\le d_\tau(e_n)-\tau(f_\dt(e_n)) +(1/2n)^2.
\eneq
Hence 
\beq
\|e_n^{1/k(n)}-e_n^{1/k(n)}f_\dt(e_n)\|^2_{_{2, \Tw}}\le \sup\{d_\tau(e_n)-\tau(f_\dt(e_n)):\tau\in \Qw\}+(1/2n)^2.
\eneq
%
%
It follows   that  $\Pi_\varpi(\{e_n^{1/k(n)}-e_n^{1/k(n)}f_\dt(e_n)\})=0.$ 
By \eqref{225-1},
$\{e_n^{1/k(n)}-f_\dt(e_n)\}\in I_{_{\Qw,\varpi}}.$ But $\Pi_\varpi(\{f_\dt(e_n)\})=\Pi_\varpi(\{e_n\}).$
It follows that $\Pi_\varpi(\{e_n^{1/k(n)}\})=p.$
Recall  that
\beq
p\le \Pi_\varpi(\{e_n^{1/m(n)}\})\le \Pi_\varpi(\{e_n^{1/k(n)}\})=p.
\eneq
We conclude that  $p$ is a projection and  $\{e_n\}$ is a permanent projection lifting.

For the converse, we claim that if (i) holds then 
$\lim_{n\to\varpi}\omega(e_n)=0.$

Otherwise,  there exists $\sigma>0$  satisfying the following:
 For any ${\cal P}\in \varpi,$ there  is   an integer $n({\cal P})\in {\cal P}$ 
 such that
that $\omega(e_{n({\cal P})})>\sigma.$ 
Fix any $\dt\in (0,1/4),$ by Proposition 4.6 of 
\cite{FLosc},  for each of these $n({\cal P}),$ there is 
an integer  
$r(n({\cal P}))$ such 
that
\beq\label{Dpproj-5}
\sup\{\tau(e_{_{n({\cal P})}}^{1/r(n({\cal P}))})-\tau(f_{\dt}(e_{_{n({\cal P})}})): \tau\in \Tw\}>\omega(e_{_{n(\cal P)}})-\sigma/4>\sigma/2.
\eneq
We now define, for each $k\in \N,$ an integer $m(k)$ as follows:
If $k=n({\cal P})$ for some ${\cal P}\in \varpi,$ 
define 
\beq
m(k)=\min\{r(n({\cal P})):  k=n({\cal P})\}. 
\eneq
This is well defined. 
If $k\not=n({\cal P})$ for any ${\cal P}\in \varpi,$ define 
$m(k)=k.$ 

Then, for any ${\cal P}\in \varpi,$ if $k=n({\cal P})\in {\cal P},$ by \eqref{Dpproj-5}, 
\beq
\sup\{\tau(e_k^{1/m(k))})-\tau(f_{\dt}(e_{k})): \tau\in \Qw\}>\sigma/2.
\eneq
In other words,
\beq
\lim_{n\to\varpi}\|(e_n^{1/m(n)}-f_{\dt/2}(e_n))^{1/2}\|_{_{2, \Qw}}\not< \sigma/2.
\eneq
%
Therefore $\Pi(\{e_n^{1/m(n)}\})\not=\Pi(f_\dt(\{e_n\}))=p.$ A contradiction.
Hence
$\lim_{n\to\varpi}\omega(e_n)=0.$


For (i) $\Rightarrow$ (ii),
and (i) $\Rightarrow$ (iii), 
we first note that 
there exists 
a sequence $\dt_n\in (0,1/2)$  such that
$
\lim_{n\to\infty} \dt_n=0$
and
$$
\sup\{d_\tau(e_n)-\tau(f_{\dt_n}(e_n)): \tau\in \Qw\}<\omega(e_n)+1/n,\,\,n\in \N.
$$
As in the proof of (iii) $\Rightarrow$ (i), we may choose integers $m(n)$ such that 
\beq
&&\|e_n^{1/m(n)}f_{\dt_n}(e_n)-f_{\dt_n}(e_n)\|<1/n,\,\, n\in \N\andeqn\\
&&\sup\{d_\tau(e_n)-\tau(e_n^{1/m(n)}): \tau\in \Qw\}<\omega(e_n)+1/n,\,\,n\in\N.
\eneq
Put $d_n=e_n^{1/m(n)}f_{\dt_n}(e_n)-f_{\dt_n}(e_n),$ $n\in \N.$
Then $\{d_n\}\in c_0(A).$
Note  that  ($e_n^2e_n^{1/m(n)}=e_n^{1/m(n)}e_n^2$)
\beq\nonumber
&&\hspace{-0.5in}\sup\{d_\tau(e_n)-\tau(e_n^2): \tau\in \Qw\}\\\nonumber
&&\le 
\sup\{d_\tau(e_n)-\tau(e_n^{1/m(n)}): \tau\in \Qw\}+\sup\{\tau(e_n^{1/m(n)}-e_n^2):\tau\in \Qw\}\\
&&<\omega(e_n)+1/n+\|e_n^{1/m(n)}-e_n^2\|_{2, \Qw}.
\eneq
By the claim just proved, if (i) holds, 
then (ii) follows.

Similarly,   we have
\beq\nonumber
&&\hspace{-0.4in}\sup\{d_\tau(e_n)-\tau(f_\dt(e_n)): \tau\in \Qw\}\\\nonumber
&&\le 
\sup\{|d_\tau(e_n)-\tau(f_{\dt_n}(e_n))|: \tau\in \Qw\}+\sup\{|\tau(f_{\dt_n}(e_n))-\tau(f_\dt(e_n))|:\tau\in \Qw\}\\\label{Dpproj-10}
&&<\omega(e_n)+1/n+\|f_{\dt_n}(e_n)-f_\dt(e_n)\|_{_{2, \Qw}}.
\eneq
We note that
\beq
e_n^{1/m(n)}\ge e_n^{1/m(n)}f_{\dt_n}(e_n)=f_{\dt_n}(e_n)+d_n,\,\,n\in \N.
\eneq
Since $d_n\in c_0(A),$
$$
\Pi_{\varpi}(\{f_{\dt_n}(e_n)\})=\Pi_\varpi(\{f_{\dt_n}(e_n)+d_n\})=\Pi_\varpi(e^{1/m(n)}f_{\dt_n}(e_n)\})
\le \Pi_\varpi(\{e_n^{1/m(n)}\})=p.
$$
Since $\lim_{n\to\infty}\dt_n=0,$
we also have  (for large $n$)
\beq
p=\Pi_\varpi(\{f_\dt(e_n)\})\le \Pi_{\varpi}(\{f_{\dt_n}(e_n)\})\le p.
\eneq
Consequently 
$\{f_{\dt_n}(e_n)-f_\dt(e_n)\}\in I_{_{\Tw,\varpi}}.$
Then,  by \eqref{Dpproj-10} and the claim that $\lim_{n\to\infty}\omega(e_n)=0,$  we have
\beq
\lim_{n\to\varpi}\sup\{d_\tau(e_n)-\tau(f_\dt(e_n)): \tau\in \Qw\}=0.
\eneq
%
This implies (iii) holds. The proposition follows.
\end{proof}

{\it For the rest of the paper, we will assume that \CA\, $A$ has the property that all 2-quasitraces 
are traces.}

Proposition \ref{LOs-n1}  and Corollary \ref{CCOs}  are not  directly used 
in the proof of the next two sections. However, some ideas behind the proof of section 4
are inspired  by (a somewhat more complicated version of) these  easy facts. 
We believe that it is appropriate  to share these with the reader. 

\begin{rem}\label{Rclosure}
Let $A$ be a separable \CA\, and $a\in A_+^{\bf 1}.$ 
Put $C=\Her(a)$ and $C^\perp=\{b\in A: ba=ab=0\}.$ 
A natural but naive question is how large is  $C+C^\perp?$ 
The answer is, of course, very disappointing, as $C^\perp$ may well be zero.
Let $p_a$ be the open projection associated with $a.$ It is well known 
that $p$ may well be dense in $A$ (even in the commutative case). 

We would like to offer Proposition \ref{LOs-n1}  and Corollary \ref{CCOs}
to rescue us from the total disaster
as we will see $C+C^\perp$ could be approximately  as large as possible (at least tracially).
\end{rem}

\begin{prop}\label{LOs-n1}
Let $A$ be a compact \CA\,  with $\widetilde T(A)\setminus \{0\} \not=\emptyset$
and let $e_B\in A_+\setminus \{0\}.$  
Suppose that $0\le a\le 1$ is in $B={\rm Her}(e_B).$ 
Then, for any $\ep>0,$ there exists $\dt_0>0$ such 
that, for all $0<\dt< \dt_0,$
\beq
\hspace{-0.3in}d_\tau(f_\dt(a))+d_\tau(e_C)&>&d_\tau(e_B)-\liminf_{\eta\to \dt/2}\{d_\tau(f_\eta(a))-d_\tau(f_\dt(a)): 0<\eta<\dt/2\}\\\
\label{LOs-n1-ne-1}
&>&d_\tau(e_B)-\omega(a)-\ep\hspace{0.3in} \tforal \tau\in \Tw,
\eneq
where $e_C$ is a strictly positive element of $C={\rm Her}(f_\dt(a))^\perp\cap B.$
Moreover,
\beq
\omega(f_\dt(a)+e_B)<\omega(e_C)+\omega(a)+\ep.
\eneq
\end{prop}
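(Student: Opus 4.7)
The plan is to work spectrally inside $A^{**}$.  Let $p_B$ denote the support projection of $B$, and for $\eta>0$ set $q_\eta^{\mathrm o}:=\chi_{(\eta/2,\infty)}(a)$ and $q_\eta^{\mathrm c}:=\chi_{[\eta/2,\infty)}(a)$ for the open and closed spectral projections of $a$ at height $\eta/2$; since $a\in B$ one has $q_\eta^{\mathrm o}\le q_\eta^{\mathrm c}\le p_B$, and $q_\eta^{\mathrm o}$ is the range projection of $f_\eta(a)$, so $d_\tau(f_\eta(a))=\tau(q_\eta^{\mathrm o})$.  The key identification is that $C=\Her(f_\dt(a))^\perp\cap B$ has open support $p_B-q_\dt^{\mathrm c}$: the smallest closed projection in $A^{**}$ dominating the open projection $q_\dt^{\mathrm o}$ is $q_\dt^{\mathrm c}$, and intersection of hereditary subalgebras with commuting open supports has support equal to the meet of the supports.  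This yields $d_\tau(e_C)=\tau(p_B-q_\dt^{\mathrm c})$ and the central identity
\[
d_\tau(f_\dt(a))+d_\tau(e_C)=d_\tau(e_B)-\tau(q_\dt^{\mathrm c}-q_\dt^{\mathrm o}),
\]
where $q_\dt^{\mathrm c}-q_\dt^{\mathrm o}=\chi_{\{\dt/2\}}(a)$ is the spectral mass of $a$ at the single point $\dt/2$.

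For the first inequality, monotone shrinking of $q_\eta^{\mathrm o}$ as $\eta\nearrow\dt/2$ gives, for each $\tau\in\Tw$, $\lim_{\eta\to\dt/2^-}\tau(q_\eta^{\mathrm o})=\tau(\chi_{[\dt/4,\infty)}(a))\ge \tau(q_\dt^{\mathrm c})$, so $\liminf_\eta(d_\tau(f_\eta(a))-d_\tau(f_\dt(a)))\ge \tau(q_\dt^{\mathrm c}-q_\dt^{\mathrm o})$; combining with the central identity yields the first inequality.  For the second inequality, I would use Proposition 4.6 of \cite{FLosc} to pick $\dt_0>0$ with $\sup_\tau(d_\tau(a)-\tau(f_{\dt'}(a)))<\omega(a)+\ep$ for all $\dt'<\dt_0$, and observe that $\tau(f_{\dt'}(a))\le d_\tau(f_{\dt'}(a))$ forces $\sup_\tau\tau(\chi_{(0,\dt'/2]}(a))<\omega(a)+\ep$.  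For $\dt<\dt_0$, the liminf term is then bounded pointwise by $\tau(\chi_{[\dt/4,\dt/2]}(a))\le\tau(\chi_{(0,\dt/2]}(a))<\omega(a)+\ep$, giving the second inequality.

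For the moreover part, note that $f_\dt(a)+e_B$ and $e_B$ share support $p_B$ and hence the same hereditary subalgebra $B$, so $\omega(f_\dt(a)+e_B)=\omega(e_B)$.  To bound $\omega(e_B)$, I would construct $c=c_1+c_2$ with $c_1\perp c_2$ by taking $c_1:=g_\eta(a)$ where $g_\eta\in C([0,\infty))$, $0\le g_\eta\le 1$, $g_\eta=0$ on $[0,\dt/2+\eta/2]$ and $g_\eta=1$ on $[\dt/2+\eta,\infty)$ (so $c_1\in\Her(f_\dt(a))^+$ with $\|c_1\|\le 1$), and choosing $c_2\in C^+$ with $\|c_2\|\le 1$ and $\sup_\tau(d_\tau(e_C)-\tau(c_2))<\omega(e_C)+\ep/2$.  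The critical trick is to combine the boundary term with the $c_1$-approximation term \emph{before} taking the supremum: a direct spectral computation gives
\[
(d_\tau(f_\dt(a))-\tau(c_1))+\tau(q_\dt^{\mathrm c}-q_\dt^{\mathrm o})\le \tau(\chi_{[\dt/2,\dt/2+\eta)}(a))\le \tau(\chi_{(0,\dt_0/2]}(a))<\omega(a)+\ep/2,
\]
provided $\eta$ is chosen small enough that $\dt+2\eta\le\dt_0$.  Using the central identity to rewrite $d_\tau(e_B)-\tau(c_1+c_2)=[(d_\tau(f_\dt(a))-\tau(c_1))+\tau(q_\dt^{\mathrm c}-q_\dt^{\mathrm o})]+(d_\tau(e_C)-\tau(c_2))$ and then taking $\sup_\tau$ yields $\omega(e_B)<\omega(a)+\omega(e_C)+\ep$.

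The main obstacle is precisely this grouping step.  A naive approach that splits $\sup_\tau$ across the three separate error contributions --- boundary mass at $\dt/2$, interior approximation gap for $f_\dt(a)$ within $\Her(f_\dt(a))$, and approximation gap for $e_C$ within $C$ --- bounds each of the first two by $\omega(a)+\ep$ via the same $\chi_{(0,\dt_0/2]}(a)$ estimate, producing only the weaker bound $2\omega(a)+\omega(e_C)$.  Recognizing that the boundary and interior gaps are pointwise majorized together by the single spectral trace $\tau(\chi_{[\dt/2,\dt/2+\eta)}(a))$ and applying the oscillation estimate only once is what keeps the coefficient of $\omega(a)$ equal to one.
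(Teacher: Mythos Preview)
There is a genuine gap in your central identification: the closure of the open projection $q_\delta^{\mathrm o}=\chi_{(\delta/2,\infty)}(a)$ is \emph{not} $q_\delta^{\mathrm c}=\chi_{[\delta/2,\infty)}(a)$ in general.  Take $A=\C^2$, $e_B=1$, $a=\diag(\delta/2,1)$: every projection in a finite-dimensional algebra is clopen, so $\overline{q_\delta^{\mathrm o}}=q_\delta^{\mathrm o}=e_{22}$, yet $q_\delta^{\mathrm c}=1$; correspondingly $C=\C e_{11}$ has open support $e_{11}\ne 0=p_B-q_\delta^{\mathrm c}$.  The correct open support of $C$ is $p_B-\overline{q_\delta^{\mathrm o}}$, and one only has $\overline{q_\delta^{\mathrm o}}\le q_\delta^{\mathrm c}$.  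This downgrades your ``central identity'' to the inequality $d_\tau(f_\delta(a))+d_\tau(e_C)\ge d_\tau(e_B)-\tau(q_\delta^{\mathrm c}-q_\delta^{\mathrm o})$; fortunately all of your subsequent estimates (the liminf bound and the grouping trick) use only this direction, so the argument survives once the claim is weakened.  The paper sidesteps $A^{**}$ entirely: it exhibits explicit elements $d_{m,n}=(1-f_\eta(a)^{1/n})e_B^{1/m}(1-f_\eta(a)^{1/n})\in C$ for each $0<\eta<\delta/2$, and letting $m,n\to\infty$ reads off $d_\tau(e_C)\ge d_\tau(e_B)-d_\tau(f_\eta(a))$ directly.

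Separately, the ``moreover'' clause as printed is a typo: comparing with Corollary~\ref{CCOs} and with the paper's own argument, the intended inequality is $\omega(f_\delta(a)+e_C)<\omega(e_B)+\omega(a)+\ep$, with $e_B$ and $e_C$ interchanged.  You prove the literal misprint $\omega(e_B)<\omega(e_C)+\omega(a)+\ep$; your observation $\omega(f_\delta(a)+e_B)=\omega(e_B)$ is correct, and your grouping argument (after patching the closure issue) does establish this inequality, but it is not what the paper proves.  The paper's route to the intended bound is quite different: from $(1/2)e_B\le d_{m,n}+f_\eta(a)^{1/n}$ it deduces the Cuntz equivalence $e_B\sim f_\delta(a)+e_C+(f_\eta(a)^{1/n}-f_\delta(a))$, then invokes a perturbation estimate for $\omega$ (Proposition~4.4(1) of \cite{FLosc}) to obtain $\omega(e_B)\ge \omega(f_\delta(a)+e_C)-\sup_\tau d_\tau(f_\eta(a)^{1/n}-f_\delta(a))$, the last term being $<\omega(a)+\ep$ by the choice of $\delta_0$.
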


\begin{proof}
Let $\ep>0.$
Choose $\dt_0>0$ such that, for all $0<r\le  2\dt_0,$ 
\beq\label{LOs-n1-nn1}
\sup\{d_\tau(a)-\tau(f_{r}(a)):\tau\in \Tw\}<\omega(a)+\ep/4.
\eneq
Set $0<\dt<\dt_0.$  Choose any $0<\eta<\dt/2.$
Define, for $m,n\in \N,$  
$$
d_{m,n}:=(1-f_{\eta}(a)^{1/n})e_B^{1/m}(1-f_{\eta}(a)^{1/n}).
$$
Then   $0\le d_{m,n}\le 1$ and 
$
d_{m,n}f_{\dt}(a)=f_\dt(a)d_{m,n}=0.
$
In other words, $d_{m,n}\in C:={\rm Her}(f_\dt(a))^\perp\cap B.$
Let $e_C\in C$ be a strictly positive element.
We compute that, for all $\tau\in \Tw,$
\beq
d_\tau(e_C)\ge \tau(d_{n,m})=\tau(e_B^{1/m})-2\tau(e_B^{1/m}f_{\eta}(a)^{1/n})+\tau(f_{\eta}(a)^{1/n}e_B^{1/m}f_{\eta}(a)^{1/n}). 
\eneq
Let $m\to \infty,$   for all $\tau\in \Tw,$
\beq
d_\tau(e_C)\ge d_\tau(e_B)-2\tau(f_{\eta}(a)^{1/n})+\tau(f_{\eta}(a)^{2/n}).
\eneq
Then,  by letting $n\to\infty,$   we obtain
$d_\tau(e_C)\ge d_\tau(e_B)-d_\tau(f_{\eta}(a)).$
Therefore
\beq
d_\tau(f_\dt(a))+d_\tau(e_C)\ge d_\tau(e_B)-(d_\tau(f_{\eta}(a))-d_\tau(f_\dt(a))\rforal \tau\in \Tw.
\eneq
Letting $\eta\to \dt/2,$ we obtain, for all $\tau\in \Tw,$
\beq
d_\tau(f_\dt(a))+d_\tau(e_C)\ge d_\tau(e_B)-\liminf_{\eta\to \dt}\{d_\tau(f_{\eta}(a))-d_\tau(f_\dt(a)):0<\eta<\dt/2\}.
\eneq
However,  for any $\tau\in \Tw,$
\beq
\liminf_{\eta\to \dt}\{d_\tau(f_{\eta}(a))-d_\tau(f_\dt(a)):0<\eta<\dt/2\}\le d_\tau(a)-\tau(f_{\dt_0}(a))
\le \omega(a)+\ep.
\eneq
We then  have, for all $\tau\in \Tw,$
\beq
d_\tau(f_\dt(a))+d_\tau(e_C) &\ge& d_\tau(e_B)-\liminf_{\eta\to \dt}\{d_\tau(f_{\eta}(a))-d_\tau(f_\dt(a)):0<\eta<\dt/2\}\\
&>& d_\tau(e_B)-\omega(a)+\ep.
\eneq

For the last part of the statement,   since $d_{mn}\in C,$ 
we may choose $e_C$ so that
$d_{m,n}\le e_C.$
Then, by I.1.11 of \cite{BH},
\beq
(1/2)e_B\le  d_{m,n}+f_{\eta}(a)^{1/n}\le 
f_\dt(a)+e_C+(f_{\eta}(a)^{1/n}-f_\dt(a)).
\eneq
It follows that
$e_B\sim f_\dt(a))+e_C+(f_{\eta}(a)^{1/n}-f_\dt(a)).$
Therefore (by (1) of Proposition 4.4 of \cite{FLosc} for the first inequality below),
\beq
\omega(e_B)&=&\omega(f_\dt(a)+e_C+(f_{\eta}(a)^{1/n}-f_\dt(a)))\\
&\ge&  \omega(f_\dt(a)+e_C)-\sup\{d_\tau(f_\eta(a)^{1/n}-f_\dt(a)):\tau\in \Tw\}\\
&{\stackrel{\eqref{LOs-n1-nn1}}{>}}&\omega(f_\dt(a)+e_C)-\omega(a)-\ep/4.
\eneq
Hence
\beq\nonumber
\omega(f_\dt(a))+e_C)<\omega(e_B)+\omega(a)+\ep.
\eneq
\vspace{-0.1in}
\end{proof}

\begin{cor}\label{CCOs}
Let $A$ be a  compact \CA\, with  $\tilde T(A)\setminus \{0\} \not=\emptyset$
and let $e_B\in A_+\setminus \{0\}.$  
Suppose that $0\le a\le 1$ is in $B:={\rm Her}(e_B)$ with $\omega(a)=0.$
Then, for any $\ep>0,$ there exists $\dt_0>0$ such 
that, for all $0<\dt< \dt_0,$
\beq
d_\tau(f_\dt(a))+d_\tau(e_C)>
d_\tau(e_B)-\ep \tforal \tau\in \Tw,
\eneq
where $e_C$ is a strictly positive element of $C:={\rm Her}(f_\dt(a))^\perp\cap B.$
Moreover,
\beq\nonumber
\omega(f_\dt(a)+e_C)<\omega(e_B)+\ep.
\eneq
\end{cor}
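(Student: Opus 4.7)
The statement is essentially a direct specialization of Proposition \ref{LOs-n1} to the case $\omega(a) = 0$, so the plan is short: apply that proposition and observe that the $\omega(a)$ terms disappear from both conclusions.

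More precisely, given $\ep > 0$, I would feed $\ep$ directly into Proposition \ref{LOs-n1} (with the same data $e_B$, $a$, and $B = \Her(e_B)$) to obtain the threshold $\dt_0 > 0$. For any $0 < \dt < \dt_0$, letting $e_C$ be a strictly positive element of $C = \Her(f_\dt(a))^\perp \cap B$, that proposition yields
\begin{equation*}
d_\tau(f_\dt(a)) + d_\tau(e_C) > d_\tau(e_B) - \omega(a) - \ep \tforal \tau \in \Tw.
\end{equation*}
Substituting $\omega(a) = 0$ gives the first assertion of the corollary verbatim.

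For the ``moreover'' part, the proof of Proposition \ref{LOs-n1} actually establishes (in its final display) the bound $\omega(f_\dt(a) + e_C) < \omega(e_B) + \omega(a) + \ep$ --- the orthogonal decomposition $e_B \sim f_\dt(a) + e_C + (f_\eta(a)^{1/n} - f_\dt(a))$ combined with the perturbation estimate (1) of Proposition 4.4 of \cite{FLosc} controls the oscillation of $f_\dt(a) + e_C$ by that of $e_B$ modulo a small error coming from $f_\eta(a)^{1/n} - f_\dt(a)$, which is in turn dominated by $\omega(a) + \ep/4$. Setting $\omega(a) = 0$ collapses this to $\omega(f_\dt(a) + e_C) < \omega(e_B) + \ep$, which is exactly the second conclusion.

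There is really no obstacle to speak of; the only thing worth being careful about is matching the roles of $e_B$ and $e_C$ correctly in the ``moreover'' statement of Proposition \ref{LOs-n1} (as written, it is $\omega(f_\dt(a) + e_C)$ that is being bounded, which is what the present corollary asserts). No additional machinery beyond Proposition \ref{LOs-n1} is required.
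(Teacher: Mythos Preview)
Your proposal is correct and matches the paper's approach exactly: the paper gives no separate proof for this corollary, treating it as an immediate specialization of Proposition~\ref{LOs-n1} with $\omega(a)=0$. You also correctly noted that the ``moreover'' bound as written in the \emph{statement} of Proposition~\ref{LOs-n1} has $e_B$ and $e_C$ swapped, while the \emph{proof} establishes $\omega(f_\dt(a)+e_C)<\omega(e_B)+\omega(a)+\ep$, which is what is actually needed here.
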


\section{Lifting}

Throughout this section $\varpi\in \bt(\N)\setminus \N$ is a fixed free ultrafilter.

The most of the content  of this section is within the same lines as that of \cite{MS} 
which are also  reformulated in \cite{KR} and \cite{TWW-2}. But by the end of this 
section, we add some flavor of tracial approximate oscillation to these results. 

The following is  a version of a result of Y. Sato (see Lemma 2.1 of \cite{S12}). 
The unital version  (see Theorem 3.3 of \cite{KR}) of  the next lemma  can also be found  in \cite{KR}
(for the second statement, see also the end of Remark 3.11 of \cite{KR2}).

\begin{thm}\label{TSR}
Let $A$ be a separable \CA, $\tau$ a faithful tracial state,
$N$  the weak closure of $A$ under the GNS representation of $A$ with respect  to
the state $\tau.$ 
Then there are natural  surjective \hm s
\beq
\Av\to l^\infty(N)/I_{\tau,\vo}(N)\tand  \Av\cap A'\to 
(\Nuv)\cap N'.
\eneq
Moreover,
\beq
\Avt \cong \Nuv\tand (\Avt)\cap A'\cong (\Nuv)\cap N'
\eneq
(see also the last line of \ref{Dultrafiler} for notation).
\end{thm}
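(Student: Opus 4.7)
The natural inclusion $A\hookrightarrow N$ induces a $*$-homomorphism $\iota\colon l^\infty(A)\to l^\infty(N)$, and because the normal faithful extension $\bar\tau$ of $\tau$ to $N$ satisfies $\bar\tau|_A=\tau$, the seminorm $\|\cdot\|_{2,\tau}$ takes the same value on an element of $A$ whether computed in $A$ or in $N$. Hence $\iota^{-1}(I_{\tau,\varpi}(N))=I_{\tau,\varpi}$ and $\iota(c_{\varpi}(A))\subset I_{\tau,\varpi}(N)$, so $\iota$ descends to a natural $*$-homomorphism $\Av\to\Nuv$ whose kernel is exactly $I_{\tau,\varpi}/c_{\varpi}(A)$. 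The plan is to prove surjectivity of these descended maps (both on the whole algebra and on the $A'$-sector); the ``moreover'' isomorphisms then drop out by modding out the kernels.

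For surjectivity of $\Avt\hookrightarrow\Nuv$, I would invoke the Kaplansky density theorem: the unit ball of $A$ is $\sigma$-strongly dense in the unit ball of $N$, and since $\bar\tau$ is a normal faithful trace, the $\sigma$-strong and $\|\cdot\|_{2,\bar\tau}$ topologies agree on bounded subsets of $N$. Thus, given $\{y_n\}\in l^\infty(N)$ with $\sup_n\|y_n\|\le M$, I can pick $x_n\in A$ with $\|x_n\|\le M$ and $\|x_n-y_n\|_{2,\tau}<1/n$; then $\{x_n-y_n\}\in I_{\tau,\varpi}(N)$, so $[\{x_n\}]\in\Avt$ hits $[\{y_n\}]\in\Nuv$. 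This establishes $\Avt\cong\Nuv$ and, via the quotient $\Av\to\Avt$, the first stated surjection.

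For the commutants, I would next show $(\Avt)\cap A'\cong\Nuv\cap N'$ under the same isomorphism. If $[\{x_n\}]\in\Avt\cap A'$ with $\sup_n\|x_n\|\le M$, then $\lim_{n\to\varpi}\|[x_n,a]\|_{2,\tau}=0$ for each $a\in A$; for $y\in N$, Kaplansky provides $a\in A$ with $\|a\|\le\|y\|$ and $\|y-a\|_{2,\tau}$ arbitrarily small, so traciality gives
\[
\|[x_n,y]\|_{2,\tau}\le \|[x_n,a]\|_{2,\tau}+2M\|y-a\|_{2,\tau},
\]
which forces $[\{x_n\}]$ to commute with $N$ in $\Nuv$. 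The reverse inclusion is automatic from $A\subset N$: every element of $\Nuv\cap N'$ commutes with $A$, and under the isomorphism pulls back to an element of $\Avt$ commuting with $A$.

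The remaining task, the surjection $\Av\cap A'\twoheadrightarrow\Nuv\cap N'$, is the most delicate point. Given $[\{y_n\}]\in\Nuv\cap N'$, the previous paragraphs yield a bounded $\{x_n\}\in l^\infty(A)$ with $\{x_n-y_n\}\in I_{\tau,\varpi}(N)$ and $\lim_{n\to\varpi}\|[x_n,a]\|_{2,\tau}=0$ for every $a\in A$, i.e.\ $\{x_n\}$ is tracially central but need not be norm-central. The last step is to upgrade $\{x_n\}$ to a genuinely norm-central sequence at the cost of a perturbation in $I_{\tau,\varpi}$: this is exactly Sato's Lemma~2.1 of \cite{S12} (equivalently Theorem~3.3 of \cite{KR}; see also Remark~3.11 of \cite{KR2}), which produces $\{x'_n\}\in l^\infty(A)$ bounded with $\lim_{n\to\varpi}\|[x'_n,a]\|=0$ for all $a\in A$ and $\{x'_n-x_n\}\in I_{\tau,\varpi}$. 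The hard part will be this norm-central lifting: the gap between the $\|\cdot\|_{2,\tau}$-topology and the operator norm is substantial, and closing it requires separability of $A$ together with a reindexing/diagonal argument and a Dixmier-style averaging inside the unitary group of $N$, powered by the faithful normal trace on the enveloping $\mathrm{II}_1$-factor.
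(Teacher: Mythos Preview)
Your proposal is correct and follows essentially the same approach as the paper: both use Kaplansky's density theorem for the surjectivity of $\Avt\to\Nuv$, and both defer the norm-central lifting $\Av\cap A'\twoheadrightarrow\Nuv\cap N'$ to the cited literature (Sato's Lemma~2.1 of \cite{S12} and Theorem~3.3 of \cite{KR}). The paper's own proof is actually more terse than yours---it only reproduces the Kaplansky step to verify that the non-unital case goes through verbatim, and explicitly declines to repeat the commutant argument---so your write-up of the commutant isomorphism and the description of the Sato/Kirchberg--R\o rdam machinery adds useful detail. One small inaccuracy: $N$ need not be a $\mathrm{II}_1$-factor here since $\tau$ is not assumed extremal, only faithful; but this does not affect the validity of the cited lifting result, which works for any finite von Neumann algebra with faithful normal trace.
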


\begin{proof}
The unital case of the theorem is known (see Theorem 3.3 of \cite{KR}).
In  fact the proof of the non-unital case is the same. 
We will not repeat the proof.
However
let us briefly look at the non-unital case for first part of the statement. 

Let $\pi_A$ and $\pi_N$ denote the quotient maps 
$l^\infty(A)\to \Av$ and $l^\infty(N)\to \Nuv,$ respectively.
Denote the canonical map $\Av \to \Nuv$ by $\Phi,$ and let 
$\wtd \Phi: l^\infty(A)\to \Nv$  denote the map $\Phi\circ \pi_A.$


Let $x=\pi_A(\{x_n\})\in \Nuv.$ Then, by Kaplanski's density theorem (we wrote this on 
a Thanksgivings Day which is not a Sunday--see 2.3.4 of \cite{Pbook}), there  exists $a_k\in A$ with $\|a_k\|\le \|x_k\|$ such that
$\|a_k-x_k\|_{2,\tau}<1/k.$ It follows that $a:=\{a_k\}\in l^\infty(A)$ 
and  $\wtd \Phi(a)=x$ ($A$ has unit or not). This shows that  the proof for the first surjectivity of 
Theorem 3.3 of \cite{KR} works 
for non-unital case. 

The proof of the second surjectivity in the proof of Theorem 3.3 of \cite{KR} also uses Kirchberg's 
$\ep$-test. However, let us point out that 
the exactly the same proof works for non-unital case without changing any words
(see the proof of Theorem 3.3 of \cite{KR}). 
%
%
\end{proof}


The following lemma is stated exactly the same as Lemma 2.9 of \cite{TWW-2}
for the  unital case.

\begin{lem}[Lemma 3.3 of \cite{MS} and Lemma 2.9 of \cite{TWW-2}]
\label{LSato-2}

Let $A$ be a separable non-elementary simple nuclear \CA\, with continuous scale   and 
$\tau\in T(A).$
 Then, for any $k\ge 2,$ there exists a unital \hm\, 
$\phi: M_k\to (\Avt)\cap A' $ and an order zero \cpc\, $\psi: M_k\to  (\Av)\cap A'$
such that $\pi\circ \psi=\phi,$ where $\pi: (\Av)\cap A'\to (\Avt)\cap A'$  is the quotient map
(see also the last line of \ref{Dultrafiler} for notation).

\end{lem}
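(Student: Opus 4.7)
My plan is to pass to the GNS factor $N$ associated with $\tau$, identify it with the hyperfinite II$_1$ factor $R$, construct the unital $*$-homomorphism $\phi$ via the McDuff property, and then lift it to an order zero c.p.c.\ map using projectivity of the cone over $M_k$.

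\emph{Step 1 (identification of the target).}  Let $N$ denote the weak closure of $A$ in the GNS representation of $\tau$.  Since $A$ is simple, any nonzero tracial state is faithful, so the GNS map $A\to N$ is injective.  Because $A$ is non-elementary, $N$ is infinite-dimensional, and the extension of $\tau$ to $N$ is a faithful normal tracial state, so $N$ is finite.  Since $\tau$ is extremal, $N$ is a factor; hence $N$ is a II$_1$ factor.  As $A$ is nuclear, $N$ is injective, so by Connes' theorem $N\cong R$, the hyperfinite II$_1$ factor.  By Theorem \ref{TSR},
\[
(\Avt)\cap A' \;\cong\; \Nuv\cap N'.
\]

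\emph{Step 2 (construction of $\phi$).}  Since $R$ is McDuff, $R\cong R\otimes R$, and the second tensor factor provides a unital embedding $R\hookrightarrow R^\varpi\cap R'\cong\Nuv\cap N'$ (a second copy of $R$ yields a sequence that is asymptotically central for the $\|\cdot\|_{2,\tau}$-topology).  Picking any unital copy of $M_k$ inside $R$ and composing with this embedding, together with the isomorphism of Step 1, produces a unital $*$-homomorphism $\phi:M_k\to(\Avt)\cap A'$.

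\emph{Step 3 (order zero lift).}  By Theorem \ref{TSR} the canonical map $\pi:(\Av)\cap A'\to(\Avt)\cap A'$ is a surjective $*$-homomorphism of \CAs, hence the quotient by the ideal $I_{\tau,\varpi}/c_\varpi\cap ((\Av)\cap A')$.  Recall that a c.p.c.\ order zero map $M_k\to D$ corresponds bijectively to a $*$-homomorphism from the cone $CM_k:=C_0((0,1])\otimes M_k$ into $D$, via $\psi\longleftrightarrow\rho$ with $\rho(\iota\otimes e_{ij})=\psi(e_{ij})$.  The unital $*$-homomorphism $\phi$ of Step 2 is in particular a c.p.c.\ order zero map, and so corresponds to a $*$-homomorphism $\rho_\phi:CM_k\to (\Avt)\cap A'$.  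By Loring's theorem the cone $CM_k$ is a projective \CA, so $\rho_\phi$ lifts to a $*$-homomorphism $\rho_\psi:CM_k\to(\Av)\cap A'$ with $\pi\circ\rho_\psi=\rho_\phi$.  Setting $\psi(x):=\rho_\psi(\iota\otimes x)$ provides the required c.p.c.\ order zero lift $\psi:M_k\to(\Av)\cap A'$ satisfying $\pi\circ\psi=\phi$.

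\emph{Main obstacle.}  The delicate step is Step 1--Step 2: transporting a (matricial) central sequence living inside $R^\varpi\cap R'$ back to an approximately central sequence in $A$ with respect to the $\|\cdot\|_{2,\tau}$-seminorm.  This is precisely what Theorem \ref{TSR} delivers for us by identifying the relative commutants.  The order zero lifting in Step 3, in contrast, is a direct application of Loring's projectivity of $CM_k$ and requires no further analytic input; the continuous scale assumption plays a role only insofar as it puts us in the setting of Definition \ref{Dultrafiler}, where the algebras $\Av,\Avt,\Nuv$ and the quotient $\pi$ are defined in a clean, ideal-quotient fashion that makes projectivity directly applicable.
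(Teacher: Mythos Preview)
Your proof is correct and follows essentially the same route as the paper: identify the GNS closure $N$ as the hyperfinite II$_1$ factor via extremality of $\tau$ and nuclearity of $A$, use the McDuff property together with Theorem~\ref{TSR} to produce the unital $\phi$, and then lift to an order zero map. The only cosmetic difference is that you spell out the lifting in Step~3 via projectivity of $CM_k=C_0((0,1])\otimes M_k$, whereas the paper simply writes ``By Theorem~\ref{TSR}'' for this step---but that sentence is really invoking the surjectivity of $(\Av)\cap A'\to(\Avt)\cap A'$ provided by Theorem~\ref{TSR}, after which the projectivity argument you give is exactly what is needed.
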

\begin{proof}
Since $A$ is non-elementary and simple, $A$ has no finite dimensional quotient. 
Therefore, by Proposition 1.3 of \cite{CETW}, $N:=\pi_\tau(A)''$ is a type II$_{1}$ von Neumann 
algebra, where $\pi_\tau$ is  the  GNS-representation given by 
the tracial state $\tau.$
 By Connes' theorem \cite{Ca}, since $A$ is nuclear,  $N$ is hyperfinite.
 By Proposition 1.6 of \cite{CETW}, for each $n\in \N,$ there is a unital embedding
 $\phi_0: M_n\to (l^\infty(N)/I_{\tau,\varpi})\cap N'.$
 %
%
By Theorem \ref{TSR} above,  applying Proposition 1.8 of \cite{CETW} (see 
Proposition 1.2.4  of \cite{WZ}), 
  there is an order zero  \cpc\, $\psi: M_k\to (\Av)\cap A'$ such that
$\phi=\pi\circ \psi$ as desired.  
\end{proof}

\begin{lem}\label{LLpproj-2}
Let $A$ be a separable simple \CA\, with ${\rm Ped}(A)=A,$  $S\subset T(A)$ a compact subset 
and $p\in \AvS$  a projection.
Suppose that  $A$ has tracial approximate oscillation zero.
Then there is a permanent projection lifting 
$\{e_n\}\in l^\infty(A)$ of projection  $q=\Pi_\varpi(\{e_n\})$ such that 
$\pi_S(\{e_n\})=p$ and $\Phi_S(q)=p,$
where $\pi_S: l^\infty(A)\to \AvS$ and $\Phi_S: l^\infty(A)/I_{_{\Tw,\varpi}}\to \AvS$ 
and $\Pi_\varpi: l^\infty(A)\to l^\infty(A)/I_{_{\Tw, \varpi}}$
are quotient maps, respectively.

Moreover, if $\{a_n\}\in l^\infty(A)_+^{\bf 1}$ is a given lifting of $p,$
we may choose $e_n\in \Her(a_n)_+^{\bf 1},$ $n\in \N.$
\end{lem}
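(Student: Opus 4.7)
The plan is to lift $p$ in three stages. First, pick an arbitrary positive contractive lift $\{a_n\} \in l^\infty(A)_+^{\bf 1}$ of $p$; since $p = p^2$ in $\AvS$, we have $\lim_{n \to \varpi}\|a_n - a_n^2\|_{2, S} = 0$. Second, apply T-tracial approximate oscillation zero: for each $n$, $\Omega^T(a_n) = 0$ (Definition \ref{defOs2}) gives $c_{n,k} \in \Her(a_n)_+^{\bf 1}$ with $\|a_n - c_{n,k}\|_{2, \Tw} \to 0$ and $\omega(c_{n,k}) \to 0$ as $k \to \infty$. A diagonal extraction produces $b_n := c_{n, k(n)} \in \Her(a_n)_+^{\bf 1}$ with $\|a_n - b_n\|_{2, \Tw} < 1/n$ and $\omega(b_n) < 1/n$; since $\|\cdot\|_{2, S} \le \|\cdot\|_{2, \Tw}$, the sequence $\{b_n\}$ still lifts $p$, and now $\omega(b_n) \to 0$ along $\varpi$.

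The third (and main) stage is a spectral cutoff $e_n := f_{\dt_n}(b_n) \in \Her(a_n)_+^{\bf 1}$ for a carefully chosen threshold $\dt_n > 0$. Using $\omega(b_n) < 1/n$, as in the opening of the proof of Proposition \ref{Dpproj}, one picks $\dt_n^* \in (0, 1/2)$ such that $\sup_{\tau \in \Tw}(d_\tau(b_n) - \tau(f_{\dt_n^*}(b_n))) < 2/n$. For $\dt_n \le \dt_n^*/2$, the spectral estimates
\[
e_n - e_n^2 \le \tfrac{1}{4}\chi_{(\dt_n/2, \dt_n)}(b_n) \le \tfrac{1}{4}\chi_{(0, \dt_n]}(b_n),
\]
combined with $\tau(\chi_{(0, \dt_n]}(b_n)) \le d_\tau(b_n) - \tau(f_{2\dt_n}(b_n)) < 2/n$ for all $\tau \in \Tw$, give $\|e_n - e_n^2\|_{2, \Tw} \to 0$. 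Thus $q := \Pi_\varpi(\{e_n\})$ is a projection in $\Cq$. To ensure $\pi_S(\{e_n\}) = p$ (hence $\Phi_S(q) = p$), the elementary functional-calculus bound
\[
\tau((f_{\dt_n}(b_n) - b_n)^2) \le \tfrac{\dt_n^2}{4} + \tfrac{5\, \tau(b_n - b_n^2)}{\dt_n}, \quad \tau \in S,
\]
forces $\dt_n$ to be large relative to $\eta_n^S := \sup_{\tau \in S}\tau(b_n - b_n^2)$, for instance $\dt_n \approx (\eta_n^S)^{1/2}$. Finally, Proposition \ref{Dpproj}(1) converts $\{e_n\}$ into a permanent projection lifting by passing to $\{f_\dt(e_n)\}$ for any $\dt \in (0, 1/2)$, and the ``moreover'' clause holds since $e_n \in \Her(b_n) \subset \Her(a_n)$ by construction.

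The main obstacle is the simultaneous satisfaction of the two constraints on $\dt_n$: it must be small enough ($\dt_n \le \dt_n^*/2$) to make the oscillation tail $\tau(\chi_{(0, \dt_n]}(b_n))$ uniformly small on $\Tw$, yet large enough (essentially $\dt_n \gg \eta_n^S$) to keep the $S$-cutoff error small. Their compatibility requires the oscillation threshold $\dt_n^*$ of $b_n$ to dominate $(\eta_n^S)^{1/2}$, which is arranged by refining the diagonal extraction so that $\omega(b_n)$ decays faster than $\eta_n^S$, exploiting the flexibility in the T-tracial approximate oscillation zero approximations of $a_n$ -- this synchronization is the delicate technical heart of the argument.
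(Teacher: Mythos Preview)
There is a real gap precisely where you flag ``the delicate technical heart.'' Your proposed fix---arranging $\omega(b_n)$ to decay faster than $\eta_n^S$---does not produce a lower bound on $\delta_n^*$. One has
\[
\omega(b_n)=\lim_{\delta\to 0}\sup_{\tau\in\Tw}\bigl(d_\tau(b_n)-\tau(f_\delta(b_n))\bigr),
\]
and the smallness of this \emph{limit} says nothing about how large $\delta$ may be taken while keeping the supremum below $2/n$. As an extreme case, when $\Tw$ has finitely many extreme points one has $\omega(b)=0$ for every $b$, yet $\delta_n^*$ is dictated by the spectral mass of $b_n$ near $0$ with respect to traces in $\Tw\setminus S$---a quantity you have no handle on, since for $\tau\notin S$ nothing forces $\tau(a_n-a_n^2)$ (hence $\tau(b_n-b_n^2)$) to be small. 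Thus neither the oscillation bound (which needs $\delta_n\le\delta_n^*/2$) nor the crude estimate $\tau(\chi_{(\delta_n/2,\delta_n)}(b_n))\le 4\tau(b_n-b_n^2)/\delta_n$ is available for such $\tau$, and the two requirements on $\delta_n$ need not be compatible.

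The paper avoids this balancing act with one extra move at the start. Rather than approximating $a_n$, it approximates $f_{1/4}(a_n)$ (still a lift of $p$, since $p$ is a projection), obtaining $b_n\in\Her(f_{1/4}(a_n))$, and then sets $e_n=b_n^{1/m(n)}$. Because $f_{1/8}(a_n)$ acts as a unit on $\Her(f_{1/4}(a_n))$, one gets the sandwich
\[
p=\pi_S(\{b_n\})\le \pi_S(\{e_n\})\le \pi_S(\{f_{1/8}(a_n)\})=f_{1/8}(p)=p,
\]
so $\pi_S(\{e_n\})=p$ \emph{for free}, with no constraint on $m(n)$ whatsoever. One then takes $m(n)$ large (using only $\omega(b_n)<1/(n+1)^2$) so that $\sup_{\tau\in\Tw}\bigl(d_\tau(b_n)-\tau(b_n^{2/m(n)})\bigr)<1/n^2$; this simultaneously shows $q=\Pi_\varpi(\{e_n\})$ is a projection and, by Proposition~\ref{Dpproj}(2), that $\{e_n\}$ is already a permanent projection lifting. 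No threshold synchronization is needed.
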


\begin{proof}
Let $\{a_n\}\in l^\infty(A)_+^{\bf 1}$ be such that 
$\pi_S(\{a_n\})=p.$ Since $A$ has tracial approximate oscillation zero, 
for each $n,$ there exists 
$b_n\in \Her(f_{1/4}(a_n))_+^{\bf 1}$ such that
\beq
\|f_{1/4}(a_n)-b_n\|_{_{2,\Tw}}<1/(n+1)\andeqn \omega(b_n)<1/(n+1)^2,\,\, n\in \N.
\eneq
It follows that 
\beq
p=\pi_S(f_{1/4}(a_n))=\pi_S(\{b_n\}) \andeqn  \Pi_\varpi(\{f_{1/4}(a_n)\})=\Pi_\varpi(\{b_n\}).
\eneq
Choose $m(n)\in \N$ such that (see \ref{DefOS1})
\beq\label{Lpproj-2-e-3}
d_\tau(b_n)-\tau(b_n^{2/m(n)})<1/n^2\rforal \tau\in \Tw.
\eneq
Since $f_{1/8}(a_n)b_n^{1/m(n)}=b_n^{1/m(n)}$ for all $n\in \N,$  we have 
\beq
p=\pi_S(\{b_n\})\le \pi_S(\{b_n^{1/m(n)}\})\le \pi_S(\{f_{1/8}(a_n)\})=f_{1/8}(\pi_S(\{a_n\}))=p.
\eneq
Put $e_n=\{b_n^{1/m(n)}\},$ $n\in \N.$ Then $p=\pi_S(\{e_n\}).$ 
Note that \eqref{Lpproj-2-e-3} also implies that 
\beq
\|b_n^{1/m(n)}-(b_n^{1/m(n)})^2\|_{_{2, \Tw}}<1/n,\,\, n\in \N.
\eneq
It follows that
\beq
\Pi_\varpi(\{e_n\})^2=\Pi_\varpi(\{e_n\}).
\eneq
Then $q=\Pi_\varpi(\{e_n\})$ is a projection. By \eqref{Lpproj-2-e-3} and (2) of Proposition \ref{Dpproj},
$\{e_n\}$ is a permanent projection lifting of $q.$
Moreover 
$\Phi_S(q)=p.$  

For the last statement, just recall that $e_n\in \Her(f_{1/4}(a_n))_+^{\bf 1},$ $n\in \N.$
\end{proof}
%
%
It is proved in \cite{FLosc} (see Theorem 6.4 of \cite{FLosc}) that, when $A={\rm Ped}(A)$ and $A$ has tracial  approximate oscillation zero, 
$l^\infty(A)/I_{_{\Tw,\N}}$ has real rank zero\footnote{In \cite{FLosc},  it is actually 
shown that $l^\infty(A)/I_{_{\Qw}}$ has real rank zero. Since $l^\infty(A)/I_{_{\Tw}}$
is a quotient of $l^\infty(A)/I_{_{\Qw}},$ it also has real rank zero.}. 
It then follows that $I_{_{S, \varpi}}/I_{_{\Tw, \varpi}}$ and $\Cq$ have real rank zero.
The short exact sequence considered in the next lemma is 
\beq
{\small{0\to   I_{_{S,\varpi}}/I_{_{\Tw, \varpi}}\to     l^\infty(A)/I_{_{\Tw,\varpi}}\to l^\infty(A)/I_{_{S,\varpi}}\to 0.}}
\eneq
Thus  the next lemma is a 
%
non-$\sigma$-unital  version of the Elliott Lifting Lemma (see Corollary 3.3 of  \cite{Ellaut}). 

\begin{lem}\label{LLlifting-2}
Let $A$ be a separable simple \CA\,  with ${\rm Ped}(A)=A$ 
which has tracial approximate oscillation zero 
and  let $S\subset T(A)$ be a compact subset.
Suppose that $\phi: M_k\to l^\infty(A)/I_{S,\varpi}$ 
is a \hm.
Then,
there exists an order zero  \cpc\, $\Psi=\{\Psi_n\}: M_k\to l^\infty(A)$ 
and a \hm\,  $\psi: M_k\to l^\infty(A)/I_{_{\Tw, \varpi}}$ 
such that 
$\{\Psi_n(1_k)\}$ is a permanent projection lifting 
of the projection  $\psi(1_k),$  $\Phi_S\circ \psi=\phi$ 
(recall that $\Phi_S: \Cq\to \Avt$ is the quotient map) and 
$\Pi_\varpi\circ \Psi=\psi.$
\end{lem}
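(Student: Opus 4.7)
The plan is to build the pair $(\Psi,\psi)$ in three phases: (i) lift $\phi$ across $\Phi_S$ to a $\ast$-homomorphism $\psi$; (ii) lift $\psi$ across $\Pi_\vo$ to an order zero \cpc\, $\Psi'$; and (iii) perturb $\Psi'$ by an order zero functional calculus so that $\Psi(1_k)$ becomes a \emph{permanent} projection lifting.

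For (i), consider the short exact sequence displayed immediately before the statement of the lemma,
\beq\nonumber
0\to I_{_{S,\vo}}/I_{_{\Tw,\vo}}\to l^\infty(A)/I_{_{\Tw,\vo}}\stackrel{\Phi_S}{\to} l^\infty(A)/I_{_{S,\vo}}\to 0.
\eneq
As recalled there, Theorem 6.4 of \cite{FLosc} shows that $l^\infty(A)/I_{_{\Tw,\N}}$ has real rank zero; hence so do both its ultrafilter quotient $l^\infty(A)/I_{_{\Tw,\vo}}$ and the ideal $I_{_{S,\vo}}/I_{_{\Tw,\vo}}$ (an ideal of a real rank zero \CA\, is real rank zero). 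Corollary 3.3 of \cite{Ellaut}, Elliott's lifting lemma for $\ast$-homomorphisms from finite dimensional \CA s across a surjection whose kernel has real rank zero, then produces a $\ast$-homomorphism $\psi:M_k\to l^\infty(A)/I_{_{\Tw,\vo}}$ with $\Phi_S\circ \psi=\phi$.

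For (ii), the cone $C_0((0,1])\otimes M_k$ is projective in the category of \CA s, and by the Winter--Zacharias structure theorem its $\ast$-homomorphisms into any \CA\, $B$ biject with order zero \cpc\,s $M_k\to B.$ Projectivity applied to the surjection $\Pi_\vo$ therefore lifts $\psi$ to an order zero \cpc\, $\Psi'=\{\Psi'_n\}: M_k\to l^\infty(A)$ with $\Pi_\vo\circ \Psi'=\psi$; set $h_n:=\Psi'_n(1_k),$ and by Winter--Zacharias applied fibrewise write $\Psi'_n(x)=h_n\pi_n(x)$ for some $\ast$-homomorphism $\pi_n:M_k\to \mathcal{M}(C^*(\Psi'_n(M_k)))$ commuting with $h_n.$ For (iii), fix $\dt\in (0,1/2)$ and write $f_\dt(t)=t\,g_\dt(t)$ with $g_\dt\in C([0,1]),$ $g_\dt(0)=0,$ $g_\dt(1)=1.$ Define
\beq\nonumber
\Psi_n(x):=g_\dt(h_n)\Psi'_n(x)=f_\dt(h_n)\pi_n(x) \tforal x\in M_k.
\eneq
Each $\Psi_n$ is order zero \cpc\, by the order zero functional calculus, and $\Psi_n(1_k)=f_\dt(h_n).$ Since $\Pi_\vo(\{h_n\})=\psi(1_k)$ is a projection, Proposition \ref{Dpproj}(1) yields that $\{f_\dt(h_n)\}$ is a permanent projection lifting of $\psi(1_k),$ while
\beq\nonumber
\Pi_\vo(\Psi(x))=g_\dt(\psi(1_k))\psi(x)=g_\dt(1)\psi(1_k)\psi(x)=\psi(x),
\eneq
so $\Pi_\vo\circ \Psi=\psi.$

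The main obstacle is the tension in (iii) between keeping $\Psi$ order zero \cpc\, and forcing $\Psi(1_k)$ to be a \emph{permanent} projection lift rather than just a lift: an arbitrary lift of the projection $\psi(1_k)$ need not satisfy the asymptotic identity of Proposition \ref{Dpproj}(2). The Winter--Zacharias order zero functional calculus is exactly the device that lets the $f_\dt$-perturbation on the positive factor $\Psi'(1_k)$ propagate coherently through the whole map without destroying order zero; without it, one would have to rebuild the matrix-unit images of $\Psi$ by hand starting from a prescribed permanent projection lift, which is substantially more delicate.
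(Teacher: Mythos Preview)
Your argument is correct. It differs from the paper's proof in organization rather than in depth. The paper does \emph{not} invoke Elliott's lifting lemma for step (i); instead it (a) lifts $\phi$ all the way to $l^\infty(A)$ via semiprojectivity of the cone, (b) uses Lemma \ref{LLpproj-2} (which exploits $T$-tracial approximate oscillation zero directly) to manufacture a permanent projection lifting $\{e_n^{(1)}\}$ in the $e_{1,1}$-corner, (c) transports this to all matrix units by conjugation with the partial isometries coming from the cone lift, and (d) compresses the cone lift by the resulting $\{e_n\}$ to obtain $\Psi$ and $\psi=\Pi_\vo\circ\Psi$. Your route instead takes the paper's own remark preceding the lemma at face value: you quote Elliott's lemma across the real-rank-zero kernel $I_{_{S,\vo}}/I_{_{\Tw,\vo}}$ to get $\psi$ immediately, then projectively lift and apply the $f_\dt$ order-zero functional calculus, reading off the permanent-projection property from Proposition \ref{Dpproj}(1). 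Your path is shorter and makes transparent why the paper calls the lemma a version of Elliott's lifting result; the paper's hands-on construction, on the other hand, is reused almost verbatim in the next lemma (\ref{LLlifting-2N}), where the lift must additionally land in the commutant $A'$ and one needs the explicit corner-by-corner control that your black-box appeal to Elliott does not provide.
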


\begin{proof}
Define  a \hm\, $\phi_c: C_0((0,1])\otimes M_k\to l^\infty(A)/I_{_{S, \omega}}$ by
$\phi_c(f\otimes e_{i,j})=f(1)\phi(e_{i,j})$ for all $f\in C_0((0,1])$ and $1\le i,j\le 1.$ 
Since $C_0((0,1])\otimes M_k$ is projective, there 
exists a \hm\, $\Psi_c: C_0((0,1])\otimes M_k\to 
l^\infty(A)$
such that  $\pi_S\circ \Psi_c=\phi_c.$
Define $\psi_c=\Pi_\varpi\circ \Psi_c: C_0((0,1])\otimes M_k\to l^\infty(A)/I_{_{\Tw,\varpi}}.$ 
Then $\Phi_S\circ \psi_c=\phi_c.$ 
We may write $\Psi_c=\{\wtd \Psi_c^{(m)}\}_{m\in \N},$ 
where $\wtd\Psi_c^{(m)}: C_0((0,1])\otimes M_k\to A$ is a \hm.

In what follows we will denote by $\imath$ the identity function on $[0,1].$
Put $\{a_n\}=\Psi_c(\imath\otimes 1_k)\in l^\infty(A)_+^{\bf 1}$
and $\{a_n^{(1)}\}=\Psi_c(\imath\otimes e_{1,1})\in l^\infty(A)_+^{\bf 1}.$ 
Thus
$\pi_S(\{a_n\})=\psi_c(\imath\otimes 1_k)$  and $\pi_S(\{a_n^{(1)}\})=\psi_c(\imath\otimes e_{1,1})$
 (recall 
that $\pi_S: l^\infty(A)\to\AvS$ is the quotient map).
%
 By Lemma \ref{LLpproj-2}, there is $e_n^{(1)}\in \Her(f_{1/4}(a_n^{(1)}))_+^{\bf 1}$ 
 for each $n$ 
such that $\{e_n^{(1)}\}$ is a permanent projection lifting  of a projection $q_1=\Pi_\varpi(\{e_n^{(1)}\})$ 
and $\pi_S(\{e_n^{(1)}\})=\phi(e_{1,1})=\phi_c(\imath\otimes e_{1,1}).$ 
Put $w_{j,1}=\Psi_c(f_{1/8}(\imath)\otimes e_{j,1})$   and $w_{j,1,n}=\wtd \Psi_c^{(n)}(f_{1/8}(\imath)\otimes e_{j,1}),$
$n\in \N,$ 
($2\le j\le k$).
Let $w_{j,1,n}=v_{j,1,n}|w_{j,1,n}|$ be the polar decomposition of $w_{j,1,n}$ in $A^{**}.$
Then $\Pi_\varpi(\{w_{j,1,n}\}\{e_n^{(1)}\}\{w_{j,1,n}^*\})=\Pi_{\varpi}(\{v_{j,1,n}e_n^{(1)}v_{j,1,n}\})$ is a projection.
Then $\pi_S(w_{1,j}\{e_n^{(1)}\}w_{1,j}^*)=\phi(e_{j,j})=\phi_c(\imath\otimes e_{j,j})$ ($2\le j\le k$).
Define, for each $n\in \N,$ 
\beq
e_n=e_n^{(1)}+\sum_{j=2}^k w_{j,1,n}e_n^{(1)}w_{j,1,n}^*\in \Her(f_{1/8}(a_n)).
\eneq
Since $\wtd \Psi_c^{(n)}$ is a \hm\, for each $n,$  
$\{e_n^{(1)}, w_{2,1,n}e_n^{(1)}w_{2,1,n}^*, w_{3,1,n}e_n^{(1)}w_{3,1,n}^*,..., w_{k,1,n}e_n^{(1)}w_{k,1,n}^*\}$
are mutually orthogonal.
Moreover,  $\{\Pi_\varpi(\{e_n^{(1)}\}),\,\Pi_\varpi(\{w_{j,1,n}e_n^{(1)}w_{j,1,n}^*\}_{n\in \N}): 2\le j\le k\}$ 
is a set of 
mutually orthogonal projections.
It follows that 
$q=\Pi_\varpi(\{e_n\})$ is a projection (recall that $q_1$ is a projection).
Note that, since $e_n^{(1)}\in f_{1/4}(a_n^{(1)}),$ 
\beq
w_{j,1,n}e_n^{(1)}w_{j,1,n}^*w_{j,1,n} e_n^{(1)} w_{j, 1,n}^*&=&
w_{j,1,n}e_n^{(1)} \Psi_c^{(n)}(f_{1/8}(\iota)^2\otimes e_{1,1})e_n^{(1)}w_{j,1,n}^*\\
&=&w_{j,1,n}(e_n^{(1)})^2w_{j,1,n}^*.
\eneq
Hence, for any $m\in \N,$
\beq
(w_{j,1,n}e_n^{(1)}w_{j,1,n}^*)^m=w_{j,1,n}(e_n^{(1)})^m w_{j,1,n}^*.
\eneq
It follows that, for $f\in C_0((0, 1]),$
\beq
f(w_{j,1,n}e_n^{(1)}w_{j,1,n}^*)=w_{j,1,n}f(e_n^{(1)})w_{j,1,n}^*.
\eneq
In particular, for any $m(n)\ge 2,$ 
\beq
(w_{j,1,n}e_n^{(1)}w_{j,1,n}^*)^{1/m(n)}=w_{j,1,n}(e_n^{(1)})^{1/m(n)}w_{j,1,n}.
\eneq
Since $\{e_n^{(1)}\}$ is a permanent projection lifting,  
\beq
&&\Pi_\varpi(\{(w_{j,1,n}e_n^{(1)}w_{j,1,n}^*)^{1/m(n)}\})=\Pi_\varpi(\{w_{j,1,n}(e_n^{(1)})^{1/m(n)}w_{j,1,n}\})\\
&&=\Pi_\varpi(\{w_{j,1,n}\})\Pi_\varpi(e_n^{(1)})\Pi_\varpi(\{w_{j,1,n}\})=\Pi_\varpi(\{w_{j,1,n}\}\{e_n^{(1)}\}\{w_{j,1,n}^*\})\\
&&=\Pi_{\varpi}(\{v_{j,1,n}e_n^{(1)}v_{j,1,n}\}).
\eneq
Hence 
$\{w_{j,1,n}e_n^{(1)}w_{j,1,n}\}$ is also  permanent projection lifting,
$2\le j\le k.$ Recall that 
$$e_n^{(1)}, w_{2,1,n}e_n^{(1)}w_{2,1,n}^*, w_{3,1,n}e_n^{(1)}w_{3,1,n}^*,..., w_{k,1,n}e_n^{(1)}w_{k,1,n}^*$$
are mutually orthogonal. Hence 
%
$\{e_n\}$ is a permanent  projection lifting of $q$
and $\pi_S(\{e_n\})=\phi(1_k)=\phi_c(\imath\otimes 1_k).$
Note that  
\beq\label{LL-2-e1}
&&\{e_n\}\Psi_c(f_{1/8}(\imath)\otimes 1_k)=\{e_n\}
\andeqn\\
&&\{e_n\}\Psi_c(f_{1/8}(\imath)\otimes e_{i,j})=\Psi_c(f_{1/8}(\imath)\otimes e_{i,j})\{e_n\},\,\,1\le i,j\le k.
\eneq

Define  an order zero \cpc\, $\Psi: M_k\to 
l^\infty(A)$ by $\Psi(e_{i,j})=\{e_n^{1/2}\}\Psi_c(f_{1/8}(\imath)\otimes e_{i,j})\{e_n^{1/2}\}$
($1\le i,j\le 1$).   Define $\psi: M_k\to \Cq$ by $\psi=\Pi_{\varpi}\circ \Psi.$ 
It is an order zero \cpc. 
By \eqref{LL-2-e1}, $\psi(1_k)=q$ which is a projection. 
Hence 
$\psi$ is a \hm.
%
%
%
Moreover, since $\pi_S(\{e_n\})=\phi(1_k),$   we also have $\Phi_S\circ \psi=\phi.$ 

\end{proof}

\begin{lem}\label{LLlifting-2N}
Let $A$ be a 
separable simple \CA\, with ${\rm Ped}(A)=A$ 
which has tracial approximate oscillation zero
and let $S\subset T(A)$ be a compact subset.
Suppose that $\phi: M_k\to (l^\infty(A)\cap A')/I_{_{S,\varpi}}$ 
is a \hm.

(1) There
are  order zero  \cpc s $\Psi'=\{\Psi_n'\}: M_k\to l^\infty(A)\cap A'$  and 
$\psi': M_k\to \Cqc$
such that $\Pi_\varpi\circ \Psi'=\psi'$ and $\Phi_S\circ \psi'=\phi.$

(2) Moreover,  there is also an order zero \cpc\, $\Psi: M_k\to l^\infty(A)$ such that
$\{\Psi_n(1_k)\}$ is a permanent projection lifting 
of the projection  $q=\Pi_\varpi(\{\Psi_n(1_k)\}),$  $q\psi'(b)=\psi'(b)q$ for all $b\in M_k$ and
$\psi: M_k\to \Cq$ defined by $\psi(b)=q\psi'(b)$  for all $b\in M_k$ 
is a \hm. 

(3) Furthermore,
$\Phi_S\circ \psi=\phi$ and $\Pi_\varpi\circ \Psi=\psi.$ 
\end{lem}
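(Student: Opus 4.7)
The plan is to follow the template of Lemma \ref{LLlifting-2} while adding centrality at each step, building first $\Psi',\psi'$ and then deriving $\Psi,\psi$ from these.

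For part (1), I would first extend $\phi$ to a $*$-homomorphism $\phi_c\colon C_0((0,1])\otimes M_k\to (l^\infty(A)\cap A')/I_{_{S,\varpi}}$ by $\phi_c(f\otimes e_{ij})=f(1)\phi(e_{ij})$. Semiprojectivity of the cone $C_0((0,1])\otimes M_k$, exactly as exploited in the proof of Lemma \ref{LLlifting-2}, combined with a standard $\varepsilon$-test over countable dense subsets of $A$ and $M_k$ to enforce asymptotic centrality in norm, should produce a $*$-homomorphism lift $\Psi'_c\colon C_0((0,1])\otimes M_k\to l^\infty(A)\cap A'$ with $\pi_S\circ\Psi'_c=\phi_c$. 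Setting $\Psi'(b):=\Psi'_c(\imath\otimes b)$ yields the required order zero \cpc\ into $l^\infty(A)\cap A'$, and $\psi':=\Pi_\varpi\circ\Psi'$ lands in $\Cqc$ with $\Phi_S\circ\psi'=\phi$.

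For parts (2) and (3), I would apply Lemma \ref{LLpproj-2} with the lifting $\{a_n\}:=\{\Psi'_n(1_k)\}$ of the projection $\phi(1_k)$ to obtain $\{e_n\}\subset \Her(f_{1/4}(\Psi'_n(1_k)))_+^{\bf 1}$, a permanent projection lifting of a projection $q:=\Pi_\varpi(\{e_n\})\in \Cq$ with $\Phi_S(q)=\phi(1_k)$. A supplementary $\varepsilon$-test, using that $\Psi'_n(b)\in A'$, arranges that $[e_n,\Psi'_n(b)]\in I_{_{\Tw,\varpi}}$ for every $b\in M_k$, giving $q\psi'(b)=\psi'(b)q$ in $\Cq$. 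Following Lemma \ref{LLlifting-2}, define $\Psi\colon M_k\to l^\infty(A)$ by
\[
\Psi(b)_n:=e_n^{1/2}\,\Psi'_c(f_{1/8}(\imath)\otimes b)_n\,e_n^{1/2}.
\]
The identity $f_{1/8}(\Psi'_n(1_k))e_n^{1/2}=e_n^{1/2}$, valid since $e_n^{1/2}\in\Her(f_{1/4}(\Psi'_n(1_k)))$, forces $\Psi(1_k)_n=e_n$, so $\{\Psi_n(1_k)\}$ is the stipulated permanent projection lifting of $q$, and $\Psi$ is order zero \cpc\ because $\Psi'_c$ is a $*$-homomorphism. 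Using the Winter--Zacharias decomposition $\Psi'_c(\imath\otimes b)_n=\Psi'_n(1_k)\pi_n(b)$, together with the commutation $[e_n,\Psi'_n(b)]\in I_{_{\Tw,\varpi}}$ and the approximate-projection property of $\{e_n\}$ furnished by Proposition \ref{Dpproj}, one verifies $\Pi_\varpi\circ\Psi=q\psi'=\psi$, establishing (3). Finally, since $\psi(1_k)=q$ is a projection in $\Cq$, the order zero \cpc\ map $\psi$ is automatically a $*$-homomorphism, and $\Phi_S\circ\psi=\Phi_S(q)\phi=\phi$.

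The principal technical difficulty is choosing $\{e_n\}$ with all four properties simultaneously --- lifting $\phi(1_k)$ via $\pi_S$, being a permanent projection in $\Cq$, lying in $\Her(f_{1/4}(\Psi'_n(1_k)))$, and approximately commuting with $\Psi'_n(M_k)$ modulo $I_{_{\Tw,\varpi}}$ --- and, as a consequence, nailing down the final identification $\Pi_\varpi\circ\Psi=q\psi'$. Both ingredients have to couple the $T$-tracial approximate oscillation zero hypothesis (which drives Lemma \ref{LLpproj-2}) with the norm centrality already built into $\Psi'$, reconciled by an additional standard $\varepsilon$-test argument.
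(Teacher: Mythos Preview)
Your outline for part (1) is over-engineered but harmless: since $\phi$ already maps into $(l^\infty(A)\cap A')/I_{_{S,\varpi}}$, semiprojectivity of $C_0((0,1])\otimes M_k$ lifts $\phi_c$ directly to $l^\infty(A)\cap A'$ without any $\varepsilon$-test.

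The real gap is in (2)--(3), at the step ``a supplementary $\varepsilon$-test, using that $\Psi'_n(b)\in A'$, arranges that $[e_n,\Psi'_n(b)]\in I_{_{\Tw,\varpi}}$.'' The membership $\{\Psi'_n(b)\}\in A'$ means $\lim_n\|[\Psi'_n(b),a]\|=0$ for each \emph{fixed} $a\in A$; it says nothing about commutation with the varying sequence $\{e_n\}$. Lemma~\ref{LLpproj-2} hands you a specific $e_n\in\Her(f_{1/4}(\Psi'_n(1_k)))$ with no control over $[e_n,\Psi'_n(e_{ij})]$, and there is no evident pool of approximants feeding an $\varepsilon$-test that would simultaneously preserve the permanent-projection-lifting property and force this commutator into $I_{_{\Tw,\varpi}}$. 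Without $q\psi'(b)=\psi'(b)q$ the identification $\Pi_\varpi\circ\Psi=q\psi'$ and the homomorphism claim for $\psi$ both collapse.

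The paper sidesteps this entirely by importing the matrix-spreading construction from the proof of Lemma~\ref{LLlifting-2}: apply Lemma~\ref{LLpproj-2} only in the $(1,1)$ corner to produce $e_n^{(1)}\in\Her(f_{1/4}(\Psi'_n(e_{1,1})))$, then set
\[
e_n=e_n^{(1)}+\sum_{j=2}^k w_{j,1,n}\,e_n^{(1)}\,w_{j,1,n}^*,\qquad w_{j,1,n}=\Psi'_c\bigl(f_{1/8}(\imath)\otimes e_{j,1}\bigr)_n.
\]
Because $f_{1/8}(\imath)$ acts as a unit on $f_{1/4}(\imath)$, one checks algebraically that $e_n$ commutes \emph{exactly} with every $\Psi'_c(f_{1/8}(\imath)\otimes e_{ij})_n$; in particular $q\psi'(b)=\psi'(b)q$ for all $b\in M_k$ is automatic, with no tracial or $\varepsilon$-test argument required. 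Replacing your direct application of Lemma~\ref{LLpproj-2} to $\Psi'_n(1_k)$ by this corner-then-spread construction closes the gap.
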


\begin{proof}
The proof is contained in that of Lemma \ref{LLlifting-2}.
Let  us repeat some of the same argument.

Using  the fact that \CA\,  $C_0((0,1])\otimes M_k$ is projective, 
one obtains  
an order zero \cpc\, $\psi': M_k\to \Cqc$
such that $\Phi_S\circ \psi'=\phi.$ 
Let $\Psi'=\{\Psi_n'\}: M_k\to l^\infty(A)\cap A'$ be 
an order zero \cpc\, lifting of $\psi'.$  So (1) follows.

As in  the proof of Lemma \ref{LLlifting-2}, 
put
$\pi_\tau(\{a_n\})=\psi_c(\imath\otimes 1_k)\in l^\infty(A)_+^{\bf 1}.$  
There is $e_n\in \Her(f_{1/4}(a_n))_+^{\bf 1}$  for each $n$ 
such that $\{e_n\}$ is a permanent projection lifting  of a projection $q:=\Pi_\varpi(\{e_n\})$ 
and $\pi_S(\{e_n\})=\phi(1_k).$
%
%
Note that, as in the proof of Lemma \ref{LLlifting-2},     we may also require that
$\{e_n\}\{f_{1/8}(\Psi_n'(1_k))\}=\{e_n\}$ and 
$\{e_n\}\{(f_{1/8}(\Psi_n'))(e_{i,j}))=\{(f_{1/8}(\Psi_n'))(e_{i,j})\}\{e_n\},$
$1\le i,j\le k$
(using functional calculus for order zero maps  as in the proof of \ref{LLlifting-2}).
In particular, $q\psi'(b)=\psi'(b)q$ for all $b\in M_k.$

Define  an order zero \cpc\, $\Psi: M_k\to 
l^\infty(A)$ by $\Psi(e_{i,j})=\{e_n^{1/2}\}( f_{1/8}(\Psi))(e_{i,j})\{e_n^{1/2}\}$
($1\le i,j\le 1$).   Then  $\{\Psi_n(1_k)\}=\{e_n\}$ is a permanent projection lifting of $q.$
 Define $\psi: M_k\to \Cq$ by $\psi=\Pi_{\varpi}\circ \Psi.$ 
It is an order zero \cpc. Moreover, $\psi(b)=\psi'(b)q$ for all $b\in M_k.$
Since $\psi(1_k)=q,$
$\psi$ is a \hm.
%
%
%
Moreover, since $\pi_S(\{e_n\})=\phi(1_k),$   we have $\Phi_S\circ \psi=\phi.$   So (2) and (3) hold.

\end{proof}

\begin{lem}\label{LIunit}
Let $A$ be a separable simple \CA\, with  nonempty compact $T(A).$
Suppose that
 (1) $\partial_e(T)=Y\cup Z$ and $Y\cap Z=\emptyset,$ 
 (2)   
 $Y$ is compact, $Z$ is Borel and,
 (3)   $M_Y=
 \overline{{\rm conv}(Y)}$ and  $M_Z= \overline{{\rm conv}(Z)}.$
 
 Let $I_{_{M_Z, \varpi}}=\{ \{x_n\}\in l^\infty(A): \lim_{n\to\varpi}\sup\{\|x_n\|_{_{2, \tau}}:\tau\in M_Z \}=0\}.$
 Suppose that $\{e^{\lambda}\}$ is an approximate identity for $I_{_{M_Z, \varpi}}.$
 Then, for any $\ep>0,$ there exists $\lambda_0$ such that
 \beq
&& \|\Pi_\varpi(e^\lambda)-\Pi_\varpi((e^\lambda)^2)\|<\ep\andeqn\\
 &&\lim_{n\to\varpi}\sup\{\tau(1-(e_n^\lambda)^2): \tau\in M_Y\}<\ep
 \eneq
 for all $\lambda\ge \lambda_0,$
where $\{e_n^\lambda\}\in l^\infty(A)_+^{\bf 1}$ is a lifting of $e^\lambda.$
\end{lem}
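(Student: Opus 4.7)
The strategy is to show that the quotient ideal $J:=\Pi_\varpi(I_{_{M_Z,\varpi}})\subset \Cq$ is a unital sub-$C^*$-algebra of $\Cq$, whose unit is a projection $p$ satisfying $\tau_\varpi(p)=1$ for $\tau\in M_Y$ and $\tau_\varpi(p)=0$ for $\tau\in M_Z$. Once such $p$ is at hand, $\{\Pi_\varpi(e^\lambda)\}$ is an approximate identity of the unital $C^*$-algebra $J$, so the approximate-identity property applied to $x=p\in J$ gives $\|\Pi_\varpi(e^\lambda)\cdot p-p\|<\ep$ for $\lambda$ large; since $p$ is the unit of $J$ and $\Pi_\varpi(e^\lambda)\in J$, the left side equals $\|\Pi_\varpi(e^\lambda)-p\|$, and the lemma reduces to the norm convergence $\Pi_\varpi(e^\lambda)\to p$ in $\Cq$.

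To construct $p$ I would first invoke Corollary \ref{Laff} to obtain $f\in \Aff(T(A))$ with $0\le f\le 1$, $f|_{M_Y}=1$ and $f|_{M_Z}=0$. Under the T-tracial approximate oscillation zero hypothesis implicit in this section, $\Cq$ has real rank zero (Theorem 6.4 of \cite{FLosc}). I would then produce $\{a_n\}\in l^\infty(A)_+^{\mathbf 1}$ with $\lim_{n\to\varpi}\tau(a_n)=f(\tau)$ uniformly on $\overline{T(A)}^w$, and take $p$ to be the spectral cutoff $\chi_{(1/2,1]}$ applied (in the real-rank-zero sense) to $a:=\Pi_\varpi(\{a_n\})\in\Cq$, cleaning up to a genuine projection via Lemma \ref{LLpproj-2}. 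The identity $\tau_\varpi(p)=f(\tau)$ on $\partial_e(T(A))=Y\cup Z$ then extends by affineness and the Choquet decomposition to all of $T(A)$. That $p$ is the unit of $J$ follows: given $x\in J$ and $\tau\in T(A)$, write $\tau=\alpha\mu_Y+(1-\alpha)\mu_Z$ with $\mu_Y\in M_Y$, $\mu_Z\in M_Z$ (possible by the hypotheses and Lemma \ref{Lclaimaff}); then $\tau_\varpi((1-p)x^*x(1-p))$ splits into a term on $M_Y$ (vanishing because $\tau_\varpi(1-p)=0$ there and $\|1-p\|\le 1$) and a term on $M_Z$ (vanishing because $\tau_\varpi(x^*x)=0$ there), so $(1-p)x=0$ in $\Cq$, and symmetrically $x(1-p)=0$.

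From $\|\Pi_\varpi(e^\lambda)-p\|<\ep$ the first claim follows at once, via
\[
\|\Pi_\varpi(e^\lambda)-\Pi_\varpi((e^\lambda)^2)\|\le \|p-\Pi_\varpi(e^\lambda)\|+\|p^2-\Pi_\varpi((e^\lambda)^2)\|\le 3\ep.
\]
For the second claim I would fix a lifting $\{p_n\}$ of $p$ with $\lim_{n\to\varpi}\sup_{\tau\in M_Y}\tau(1-p_n)=0$, guaranteed by $1-p\in \Pi_\varpi(I_{_{M_Y,\varpi}})$. Writing $\tau(1-(e_n^\lambda)^2)=\tau(1-p_n)+\tau(p_n-(e_n^\lambda)^2)$, the supremum over $M_Y$ of the first summand tends to $0$ along $\varpi$ by construction, while the second is dominated uniformly in $\tau\in T(A)$ by the $C^*$-quotient norm of $\Pi_\varpi(p-(e^\lambda)^2)$, which is at most $3\ep$ for $\lambda$ large; passing to the limit along $\varpi$ yields the desired estimate. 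The main obstacle is the first half of the construction of $p$: namely, realizing $f$ as the trace of a positive element of $\Cq$ and then extracting a genuine projection from its spectral cutoff. This step requires the density of trace functions in $\Aff_+(T(A))$ together with the real-rank-zero structure of $\Cq$ and the lifting machinery of Lemma \ref{LLpproj-2}.
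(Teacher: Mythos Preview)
Your overall strategy---showing that $J=\Pi_\varpi(I_{_{M_Z,\varpi}})$ is a \emph{unital} ideal in $\Cq$ and then using that any approximate identity converges in norm to the unit---is sound and conceptually clean. The paper takes a more computational route: it constructs the same element $a=\{a_n\}\in I_{_{M_Z,\N}}$ (via Corollary~\ref{Laff} and a density-of-traces result, Theorem~9.3 of \cite{Linclr1}) with $\tau(a_n)>1-1/n$ on $M_Y$ and $\tau(a_n)<1/n$ on $M_Z$, applies the approximate-identity property to $a$ to get $\sup_\tau\tau(a_n-a_n^{1/2}(e_n^\lambda)^2 a_n^{1/2})$ small, and then reads off both inequalities by direct trace estimates, without ever asserting that $J$ is unital.

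However, there is a genuine gap in your construction of $p$. You invoke ``the T-tracial approximate oscillation zero hypothesis implicit in this section'' to obtain real rank zero of $\Cq$ and then apply Lemma~\ref{LLpproj-2}. But Lemma~\ref{LIunit} carries \emph{no} such hypothesis, and its sole use is in Lemma~\ref{LIproj1}, which also does not assume it; the paper's proof is careful not to use it. So as written, your argument proves a weaker statement than what is claimed. The fix is immediate and actually simplifies your proof: the element $a=\Pi_\varpi(\{a_n\})$ you build is \emph{already} a projection in $\Cq$, no spectral cutoff needed. Indeed, since $0\le a_n\le 1$, one has $\tau(a_n-a_n^2)\le\min\{\tau(a_n),\,1-\tau(a_n)\}<1/n$ for every $\tau\in M_Y\cup M_Z$, hence (by the convex decomposition $\tau=\alpha\mu_Y+(1-\alpha)\mu_Z$) for every $\tau\in T(A)$, uniformly; thus $\|a-a^2\|_{_{2,T(A)}}\to 0$ and $a^2=a$ in $\Cq$. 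With $p:=a$, the rest of your argument goes through without any appeal to real rank zero or Lemma~\ref{LLpproj-2}.
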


\begin{proof}
For any $\sigma\in (0,1/4),$ by Corollary \ref{Laff}, 
there is $f_0\in \Aff(T(A))$ such that
\beq
0\le f_0\le 1,\,\,(f_0)|_{M_Z}=0\andeqn  (f_0)|_{M_Y}=1-\sigma.
\eneq
For any $\eta\in (0, \sigma/4),$ then
$f_0+\eta\cdot 1_{T(A)}$ is a strictly positive affine  continuous function. 
By Theorem 9.3 of \cite{Linclr1}\footnote{The  unital condition of Theorem 9.3 of \cite{Linclr1} 
is not required. 
The proof  is based on  Proposition 2.7, Theorem 2.9   and   Corollary 6.4 of \cite{CP}   which do not require 
that $A$ is unital.}
, we obtain a sequence of $\{a_n\}\in A_+^{\bf 1}$ such 
that 
\beq
\tau(a_n)>1-1/n\rforal \tau\in M_Y\andeqn \tau(a_n)<1/n\rforal \tau\in M_Z.
\eneq
It follows that $a:=\{a_n\}\in I_{_{M_Z, \N}}.$ 

Note that $\lim_\lambda\|\Pi_\varpi((a-a^{1/2}(e^\lambda)^2a^{1/2})^{1/2})\|=0.$
Write $e^\lambda= \{e_n^{\lambda}\}\in l^\infty(A)_+^{\bf 1}.$ 
It follows that, for any $\ep\in (0,1/4),$
there is $\lambda_0$ satisfying the following:  for any $\lambda\ge \lambda_0,$ 
there exists ${\cal P}\in \varpi$ such that, if $n\in {\cal P}_\lambda,$
\beq\label{LIunit-10}
\sup\{\tau(a_n-a_n^{1/2} (e_n^\lambda)^2 a_n^{1/2}): \tau\in T(A)\}<(\ep/8)^2.
\eneq
Since $(e^\lambda)^{1/2}\in I_{_{M_Z, \varpi}},$ \wilog, we may assume that,
for all $n\in {\cal P}_\lambda,$
\beq\label{LIunit-11}
\sup\{\tau(e_n^\lambda):\tau\in M_Z\}<(\ep/4)^2.
\eneq
We may assume that $1/n<(\ep/8)^2$ for all $n\in {\cal P}_\lambda.$ 
We may also assume that, for all $n\in {\cal P}_\lambda,$
\beq\label{LIunit-11+}
\sup\{\tau(a_n): \tau\in M_Y\}>1-(\ep/8)^2\andeqn \sup\{\tau(a_n): \tau\in M_Z\}<(\ep/8)^2.
\eneq
By \eqref{LIunit-10} and \eqref{LIunit-11+},  we have, for $n\in {\cal P}_\lambda,$ 
\beq
\sup\{1-\tau(a_n^{1/2}(e_n^\lambda)^2a_n^{1/2}): \tau\in M_Y\}<(\ep/4)^2.
\eneq
Hence, for $n\in {\cal P}_\lambda,$
\beq\label{LIunit-16}
\sup\{1-\tau((e_n^\lambda)^2):\tau\in M_Y\}\le \sup\{1-\tau(a_n^{1/2}(e_n^\lambda)^2a_n^{1/2}): \tau\in M_Y\}<(\ep/4)^2.
\eneq
It follows that, for all $n\in {\cal P}_\lambda,$
\beq
\sup\{\tau(e_n^\lambda-(e_n^\lambda)^2): \tau\in M_Y\}<(\ep/4)^2.
\eneq
We also have, by \eqref{LIunit-11}, for all $n\in {\cal P}_\lambda,$
\beq
\sup\{\tau(e_n^\lambda-(e_n^\lambda)^2): \tau\in M_Z\}<(\ep/2)^2.
\eneq
Since $T$ is the convex hull of $M_Y$ and $M_Z,$ 
we have, for all $n\in {\cal P}_\lambda,$ 
\beq
\sup\{\tau(e_n^\lambda-(e_n^\lambda)^2):\tau\in T(A)\}<(\ep/2)^2.
\eneq
Hence (for each $\lambda\ge \lambda_0$)
\beq
\|\Pi_\varpi(e^\lambda-(e^\lambda)^2)\|<\ep.
\eneq
This  and \eqref{LIunit-16}  hold for any $\lambda\ge \lambda_0.$ The lemma follows.
\end{proof}

\begin{lem}\label{LIproj1}
Let $A$ be a separable simple \CA\, with  nonempty compact $T(A)$ which satisfies  condition (C). 
Let $\partial_e(T(A))=\cup_{n=1}^\infty X_n,$ where 
$\{X_n\}$ satisfies (1) and (2) in \ref{Dc1}.
Suppose that $K=M_{\partial_e(T)\setminus \cup_{i=1}^JX_{i+l}}$ for some integer $J,l\in \N.$
Then, for any separable \SCA\, of $B\subset l^\infty(A),$  there exists a sequence 
$\{e_n\}\in 
(I_{_{K, \varpi}})_+^{\bf 1}$  satisfying the following:

(1) $\lim_{n\to \infty}\| {[b_n,\, e_n]}\|=0\tforal \{b_n\}\in B,$

(2) $\Pi_\varpi(\{e_n\})$ is a projection,

(3) $\lim_{n\to \varpi}\|c_ne_n-c_n\|_{_{2, T(A)}}=0,$  if $c=\{c_n\}\in B\cap I_{_{K, \varpi}}.$

(4) $\lim_{n\to\varpi} \sup\{1-\tau(e_n^2): \tau\in M_{\cup_{i=1}^JX_{i+l}}\}=0.$
\end{lem}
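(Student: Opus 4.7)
The plan is to construct $\{e_n\}$ as a diagonal sequence extracted from an approximate identity of the ideal $I_{K,\varpi} \subset l^\infty(A)$ that is quasicentral with respect to the separable \SCA\ $B$, combined with the approximate-projection estimates supplied by Lemma \ref{LIunit}.

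Setup. Put $Y = \bigcup_{i=1}^{J} X_{i+l}$ and $Z = \partial_e(T(A)) \setminus Y$, so $K = M_Z$. Since $Y$ is a finite union of the $X_{i+l}$ each of which is compact of finite covering dimension, $Y$ itself is compact, and $Z$ is Borel. By condition (C)(2) and the remark in \ref{Dc1} (that $\overline{\mathrm{conv}(S)} = M_S$ whenever $S$ is compact), the decomposition $\partial_e(T(A)) = Y \sqcup Z$ satisfies the hypotheses of Corollary \ref{Laff} and Lemma \ref{LIunit}: one has $M_Y = \overline{\mathrm{conv}(Y)}$ and $M_Z = \overline{\mathrm{conv}(Z)}$, and these two faces are disjoint.

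Quasicentral approximate unit. The closed two-sided ideal $I_{K,\varpi} \subset l^\infty(A)$ admits an approximate identity $\{u^\lambda\}$ that is quasicentral relative to $B$; since $B$ is separable, one may take it to be a sequence. Thus $\lim_\lambda \|[b,u^\lambda]\| = 0$ for $b \in B$ (a uniform-in-$n$ condition in $l^\infty(A)$) and $\lim_\lambda \|u^\lambda c - c\| = 0$ for $c \in B \cap I_{K,\varpi}$. Fix dense sequences $\{b^{(j)}\} \subset B$ and $\{c^{(j)}\} \subset B \cap I_{K,\varpi}$. By Lemma \ref{LIunit} (applied to the lifting $\{u_n^{\lambda}\}$) together with $u^\lambda \in I_{K,\varpi},$ for every $k$ one may select $\lambda_k$ so that:
\begin{enumerate}
\item[(a)] $\sup_n \|[b_n^{(j)}, u_n^{\lambda_k}]\| < 1/k$ and $\sup_n \|u_n^{\lambda_k} c_n^{(j)} - c_n^{(j)}\| < 1/k$ for $j \leq k$;
\item[(b)] $\lim_{n\to\varpi} \sup\{\tau(u_n^{\lambda_k} - (u_n^{\lambda_k})^2) : \tau \in T(A)\} < 1/k$;
\item[(c)] $\lim_{n\to\varpi} \sup\{1 - \tau((u_n^{\lambda_k})^2) : \tau \in M_Y\} < 1/k$;
\item[(d)] $\lim_{n\to\varpi} \sup\{\tau((u_n^{\lambda_k})^2) : \tau \in K\} < 1/k$.
\end{enumerate}

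Diagonal. Pick nested $P_1 \supseteq P_2 \supseteq \cdots$ in $\varpi$ on which the tracial inequalities (b)--(d) hold pointwise with error $< 1/k$. Define $k(n) = \max\{k : n \in P_k\}$ (set to $0$ off $P_1$), so that $\{n : k(n) \geq k\} \supseteq P_k \in \varpi$ for every $k$, and put $e_n = u_n^{\lambda_{k(n)}}$. The uniform-in-$n$ nature of (a) transfers directly to $e_n$: for any fixed $j$ and any $n$ with $k(n) \geq j$,
\[
\|[b_n^{(j)}, e_n]\| \leq 1/k(n) \tand \|e_n c_n^{(j)} - c_n^{(j)}\| \leq 1/k(n).
\]
Property (2) follows from (b), (3) from (a) combined with density of $\{c^{(j)}\}$, (4) from (c), and $\{e_n\} \in (I_{K,\varpi})_+^{\bf 1}$ from (d), all by taking $\varpi$-limits along $P_{k(n)}$. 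Property (1) is handled by density of $\{b^{(j)}\}$ in $B$ and the uniform bound in (a).

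Main obstacle. The principal subtlety is reconciling two flavors of convergence: the quasicentrality and approximate-unit bounds in (a) are \emph{uniform} in $n$, whereas the tracial bounds in (b)--(d) are only $\varpi$-asymptotic, available only on the ultrafilter-large sets $P_k$. The diagonal succeeds precisely because the nested $\{P_k\}$ ensures $\{n : k(n) \geq k\} \in \varpi$ for every $k$. If (1) is read strictly as $\lim_{n\to\infty}$ rather than $\lim_{n\to\varpi}$, one further needs to thin out the $\lambda_k$'s so that the $k(n)$ produced by the diagonal tends to $\infty$ along $\N$; this can be arranged by intersecting each $P_k$ with a suitable tail $[N_k, \infty)$ while preserving membership in $\varpi$, and exploiting once more the uniform character of bound (a).
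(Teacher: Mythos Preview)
Your proof is correct and follows essentially the same route as the paper: quasicentral approximate identity for $I_{K,\varpi}$ combined with Lemma~\ref{LIunit}, followed by a diagonalisation. The only cosmetic difference is that the paper packages the diagonal step as an application of Kirchberg's $\varepsilon$-test (Lemma~3.1 of \cite{KR}), whereas you carry it out by hand via the nested sets $P_k$; the two are equivalent. Your observation in the final paragraph is also well taken: the paper's own proof, via the $\varepsilon$-test, yields only $\lim_{n\to\varpi}\|[b_n,e_n]\|=0$, not the literal $\lim_{n\to\infty}$ stated in (1), and indeed the sole application (Theorem~\ref{TLlifting}) uses only the $\varpi$-limit.
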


\begin{proof}
Note that, by \ref{Dc1},  $K$ is compact (as a closed subset of $T(A)$).
Using a quasi-central approximate identity of $I_{_{K, \varpi}},$  we obtain 
 a sequence $\{\{e_n^{(m)}\}_{n\in \N}\}\subset (I_{_{K, \varpi}})_+^{\bf 1}$ such that
(in $l^\infty(A)$)
\beq\label{LIproj1-1}
&&\lim_{m\to\infty}\|e^{(m)}b-be^{(m)}\|=0\rforal b\in B\andeqn\\\label{LIproj1-2}
&&\lim_{m\to\infty}\|e^{(m)}c-c\|=0\rforal c\in B\cap I_{_{K, \varpi}},
\eneq
where $e^{(m)}=\{e_n^{(m)}\}_{n\in \N}.$
Moreover,  for any $\ep>0,$ by Lemma \ref{LIunit},   there is $m_0\in \N$ such that, for $m\ge m_0,$
\beq\label{LIproj1-4}
&&\lim_{n\to \varpi}\|e_n^{(m)}-(e_n^{(m)})^2\|_{2,T(A)}<\ep/2\andeqn\\\label{LIproj1-5}
&&\lim_{n\to\varpi}\sup\{1-\tau((e_n^{(m)})^2): \tau\in M_{\cup_{i=1}^JX_{i+l}}\}<\ep/2.
\eneq
We will use Kirchberg's $\ep$-test (we will use the version of Lemma 3.1 of \cite{KR}).
 Let $\{\{b_n^{(k)}\}_{n\in \N}\}$ be a dense sequence of $B$ and 
$\{b_n^{(2k)}\}$ be a dense sequence of $B\cap I_{_{K, \varpi}}.$
Let $Y_n=
A_+^{\bf 1},$ $n\in \N$ (as $X_n$ in Lemma 3.1 \cite{KR}).
Define, for each $x\in Y_n,$ 
\beq
f_n^{(1)}(x)&=&\|x\|_{2, K}+\sup\{1-\tau(x^2):\tau\in M_{\cup_{i=1}^J X_{i+l}}\},\andeqn\\
f_n^{(k+1)}(x)&=&\|xb_n^{(k)}-b_n^{(k)}x\|+\|xb_n^{(2k)}-b_n^{(2k)}\|+\|x-x^2\|_{2, T(A)}, \,\,k\in \N.
\eneq
We have that, since $\{e_n^{(m)}\}_{n\in \N}\in I_{_{K, \varpi}},$ for each $m\in \N,$ 
\beq
\lim_{n\to \varpi}\|e_n^{(m)}\|_{2, K}=0
\rforal m.
\eneq
By \eqref{LIproj1-5}, \eqref{LIproj1-1}, \eqref{LIproj1-2} and  \eqref{LIproj1-4}, if $\ep>0$ is given, 
by choosing $x_n=e_n^{(m)}$ for some large $m\ge m_0$), we obtain 
\beq
\lim_{n\to \varpi} f_n^{(k)}(x_n)<\ep,\,\,1\le k\le K.
\eneq
Thus, by Kirchberg's $\ep$-test (see Lemma 3.1 of \cite{KR}),  we obtain $e=\{e_n\}\in l^{\infty}(A)_+^{\bf 1}$ such that
\beq
\lim_{n\to \varpi}f_n^{(k)}(e_n)=0\rforal k\in \N.
\eneq
By $\lim_{n\to\varpi}f_n^{(1)}(e_n)=0,$ we have $e=\{e_n\}\in (I_{_{K, \varpi}})_+^{\bf 1}.$  Moreover
\beq
\lim_{n\to\varpi}\sup\{1-\tau(e_n^2): \tau\in M_{\cup_{i=1}^JX_{i+l}}\}=0\andeqn \lim_{n\to\varpi}\|e_n-(e_n)^2\|_{2, T(A)}=0.
\eneq
Hence $\Pi_\varpi(\{e_n\})$ is a projection. Furthermore,  
since $\{b^{(k)}\}$ is dense in $B$ and $\{b^{(2k)}\}$ is dense in $B\cap I_{_{K, \varpi}},$ 
respectively, we conclude that
\beq
\lim_{n\to\varpi}\|b_ne_n-e_nb_n\|=0\rforal b=\{b_n\}\in B \andeqn\\
 \lim_{n\to\varpi}\|c_ne_n-c_n\|=0\rforal c=\{c_n\}\in B\cap I_{_{K, \varpi}}.
\eneq
The lemma the follows.
\end{proof}

\begin{lem}\label{LIJ}
Let $A$ be a separable simple \CA\, with nonempty compact $T(A)$ which satisfies  condition (C). 
Let $\partial_e(T(A))=\cup_{n=1}^\infty X_n,$ where 
$\{X_n\}$ satisfies (1) and (2) in \ref{Dc1}.
Let $K_m=\cup_{i=1}^m X_i,$ $Y_{k,m}=\cup_{j=1}^k X_{j+m}$  and $Z_{k,m}=\partial_e(T)\setminus Y_{k,m},$
$k\in \N.$   Suppose that $\{b_n\}\in (I_{M_{K_m}})_+^{\bf 1}.$ 
Then, for any $\ep>0,$ there is ${\cal P}\in \varpi$ satisfying the following:
for any $n\in {\cal P},$ there exists $m(n)\in \N$ such that, for all $k\ge m(n),$
\beq
\|b_n\|_{_{2, M_{Z_{k,m}}}}<\ep
\eneq
%
\end{lem}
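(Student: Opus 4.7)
The plan is to reduce the assertion to a pointwise-in-$n$ compactness statement, using that $\{M_{Z_k}\}_k$ forms a nested decreasing family of compact sets whose intersection is $M_{K_m}$. First I would verify these structural properties. By part (2) of Definition \ref{Dc1} (with the roles of the indices $k,m$ there swapped relative to the present lemma), $M_{Z_k} = \overline{\mathrm{conv}(Z_k)}$ is closed, hence a compact subset of $T(A)$. Since the $X_i$ are pairwise disjoint (as noted in \ref{Dc1}) and $\partial_e(T(A)) = \cup_n X_n$, one has $\bigcap_k Z_k = K_m$; a Borel probability measure lying in every $M_{Z_k}$ must vanish on each $Y_k$ and therefore concentrate on $K_m$, and combined with the obvious inclusion $M_{K_m} \subseteq M_{Z_k}$ this gives $\bigcap_k M_{Z_k} = M_{K_m}$.

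For each fixed $n$, the function $\tau \mapsto \tau(b_n^2)^{1/2}$ is continuous on the compact simplex $T(A)$, so $a_k := \|b_n\|_{2,M_{Z_k}}$ is attained at some $\tau_k \in M_{Z_k}$, and $\{a_k\}$ is monotonically decreasing. Any limit point $\tau_\infty$ of $(\tau_k)_k$ lies in every $M_{Z_j}$ (using that $M_{Z_j}$ is closed and $\tau_k \in M_{Z_j}$ for all $k \ge j$), hence in $\bigcap_k M_{Z_k} = M_{K_m}$. Continuity and monotonicity then yield
\[
\lim_{k\to\infty}\|b_n\|_{2,M_{Z_k}} \;=\; \tau_\infty(b_n^2)^{1/2} \;\le\; \|b_n\|_{2,M_{K_m}},
\]
and the reverse inequality is immediate from $M_{K_m}\subseteq M_{Z_k}$.

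Finally, the hypothesis $\{b_n\}\in (I_{M_{K_m},\varpi})_+^{\bf 1}$ gives $\mathcal{P}\in\varpi$ with $\|b_n\|_{2,M_{K_m}} < \ep/2$ for every $n\in\mathcal{P}$; for each such $n$, the displayed limit lets me choose $m(n)\in\N$ so that $\|b_n\|_{2,M_{Z_k}} < \ep$ whenever $k\ge m(n)$, which is exactly the conclusion. The only delicate point is verifying that the $M_{Z_k}$ are compact with intersection exactly $M_{K_m}$, and this is precisely what condition (C) delivers; the remainder is a routine compactness-plus-continuity argument.
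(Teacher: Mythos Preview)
Your proof is correct and follows essentially the same compactness argument as the paper's. Both rely on the fact that any limit point of a sequence $\tau_k \in M_{Z_k}$ must lie in $\bigcap_k M_{Z_k} = M_{K_m}$; the paper packages this via a contradiction argument invoking Proposition~\ref{Plimits}, whereas you establish the intersection identity directly and argue without contradiction.
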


\begin{proof}
Fix $\ep\in (0,1/2)$ and $m\in \N.$ If the lemma is false, then
no ${\cal P}\in \varpi$ satisfies the said property. 
However, since $\{b_n\}\in (I_{M_{K_m}})_+^{\bf 1},$ 
there is ${\cal P}\in \varpi$ such that, for any $n\in {\cal P},$ 
\beq\label{LIJ-2}
\|b_n\|_{_{2, K_m}}<\ep/4.
\eneq
Note that ${\cal P}\cap \{k>n: k\in \N\}\in \varpi.$
Since the  lemma is assumed to be false, for some $n_0\in {\cal P},$ there must be  a sequence 
$\{I(n_0,j)\}_{j\in \N}$ 
of integers  ($I(n_0,j)\to\infty$ as $j\to \infty$) such that
\beq
\|b_{n_0}\|_{_{2, M_{Z_{l(n_0,j),m}}}}\ge \ep.
\eneq
We obtain $\tau_{I(n_0,j)}\in M_{Z_{I(n_0,j),m}}$ such that 
\beq
\|b_{n_0}\|_{_{2, \tau_{I(n_0,j)}}}\ge \ep.
\eneq
Since $T(A)$ is compact, we may assume that $\tau_{I(n_0,j)}\to \tau\in T(A)$ (as $j\to\infty$).
Then, by Proposition \ref{Plimits}, $\tau\in M_{X_1}\subset M_{K_m}.$
It follows that 
\beq
\|b_{n_0}\|_{_{2, \tau}}\ge \ep.
\eneq
A contradiction (to \eqref{LIJ-2}).
\end{proof}

We now combine an idea of Sato and the proof of  Theorem 3.3 of \cite{KR} (see also Proposition 3.8 of \cite{FLL})
with the notion of  tracial approximate oscillation zero. 

\begin{thm}\label{TLlifting}
Let $A$ be a separable  non-elementary algebraically simple \CA\, with nonempty compact $T(A)$  
satisfying condition (C)
such that $\partial_e(T(A))=\cup_{n=1}^\infty X_n$ which 
satisfies (1) and (2) in \ref{Dc1}. Suppose that $A$ has tracial approximate oscillation zero.
 Let $S:=\cup_{j=1}^mX_n.$ 
Suppose also that $\phi: M_N\to (l^\infty(A)\cap A')/I_{_{S, \varpi}}$ 
is a \hm\, (for some $N\in \N$).

Then
there exists a \hm\, $\psi: M_N\to (l^\infty(A)\cap A')/I_{_{T(A),\varpi}}$
such that $\Phi_S\circ \psi=\phi.$
\end{thm}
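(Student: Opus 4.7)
The plan is to lift $\phi$ in three stages: first, take an order-zero cpc lift $\psi'$ via Lemma~\ref{LLlifting-2N}(1); second, use Lemma~\ref{LIproj1} to build a central projection $p\in\Cqc$ that behaves like $1$ on the $M_S$-part of $T(A)$ and like $0$ on the complementary part; third, cut down by $p$ to turn $\psi'$ into a genuine homomorphism that still lifts $\phi$.

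\emph{Step 1 (order-zero lift).} By Lemma~\ref{LLlifting-2N}(1), fix an order-zero cpc $\Psi'=\{\Psi'_n\}\colon M_N\to l^\infty(A)\cap A'$ and an order-zero cpc $\psi'\colon M_N\to\Cqc$ with $\Pi_\varpi\circ\Psi'=\psi'$ and $\Phi_S\circ\psi'=\phi$. Writing $c_n:=\Psi'_n(1_N)-\Psi'_n(1_N)^2$, the sequence $\{c_n\}$ lies in $I_{_{S,\varpi}}$ (because $\phi(1_N)$ is a projection), and the obstruction to $\psi'$ being a homomorphism is that $\{c_n\}$ need not lie in $I_{_{\Tw,\varpi}}$. \emph{Step 2 (central separating projection).} Let $B\subset l^\infty(A)$ be the separable C*-subalgebra generated by the constant sequences from $A$ and by all $\{\Psi'_n(e_{i,j})\}$ with $1\le i,j\le N$. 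Put $Z:=\partial_e(T(A))\setminus S$ and apply Lemma~\ref{LIproj1} with this $B$ and with $K=M_Z$, using Proposition~\ref{Plimits} to keep $X_1$ on the correct side so that condition~(C) supplies the decomposition hypothesis. This yields $\{e_n\}\in (I_{_{M_Z,\varpi}})^{\mathbf 1}_+$ for which
\[
 p:=\Pi_\varpi(\{e_n\})
\]
is a projection in $\Cqc$ (because $e_n$ asymptotically commutes with $A$), commutes with $\psi'(M_N)$ (because $e_n$ asymptotically commutes with each $\Psi'_n(e_{i,j})$), and satisfies $\lim_{n\to\varpi}\sup_{\tau\in M_S}(1-\tau(e_n^2))=0$ together with $\lim_{n\to\varpi}\sup_{\tau\in M_Z}\tau(e_n^2)=0$.

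\emph{Step 3 (cutting down).} Define $\tilde\psi\colon M_N\to\Cqc$ by $\tilde\psi(b):=p\,\psi'(b)$. Since $p$ is a central projection in the corner containing $\psi'(M_N)$, $\tilde\psi$ is cpc and order zero (for $ab=0$, $\tilde\psi(a)\tilde\psi(b)=p^2\psi'(a)\psi'(b)=p\cdot 0=0$). The key computation is
\[
\tilde\psi(1_N)-\tilde\psi(1_N)^2=p\bigl(\psi'(1_N)-\psi'(1_N)^2\bigr)=\Pi_\varpi(\{e_nc_n\}),
\]
and by the trace property $\|e_nc_n\|_{2,\tau}^2=\tau(e_n^2c_n^2)$. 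This is bounded by $\tau(c_n^2)\le\|c_n\|_{2,M_S}^2\to 0$ on $M_S$ (as $\{c_n\}\in I_{_{S,\varpi}}$), and by $\|c_n\|^2\tau(e_n^2)\to 0$ on $M_Z$ (as $\{e_n\}\in I_{_{M_Z,\varpi}}$). By Corollary~\ref{Laff} together with condition~(C), every $\tau\in T(A)$ is a convex combination of a measure in $M_S$ and one in $M_Z$, so the two partial estimates combine to give $\|e_nc_n\|_{2,T(A)}\to 0$ along $\varpi$; hence $\tilde\psi(1_N)$ is a projection in $\Cqc$, and an order-zero cpc with projection image of the unit is automatically a $*$-homomorphism. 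For the lifting property, a Cauchy--Schwarz estimate gives $\|e_n\Psi'_n(b)-\Psi'_n(b)\|_{2,\tau}^2\le\|\Psi'_n(b)\|^2(1-2\tau(e_n)+\tau(e_n^2))\to 0$ uniformly on $M_S$, so $\{e_n\Psi'_n(b)-\Psi'_n(b)\}\in I_{_{S,\varpi}}$ and therefore $\Phi_S(\tilde\psi(b))=\Phi_S(\psi'(b))=\phi(b)$. Set $\psi:=\tilde\psi$.

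\emph{Main obstacle.} Step~3 is the crux and rests entirely on condition~(C): the separating projection $p$ exists only because Corollary~\ref{Laff} produces a continuous affine cut-off between $M_S$ and $M_Z$ (which is then plugged into Lemma~\ref{LIproj1}), and the two-part splitting of the trace-norm estimate requires that $T(A)$ genuinely equals $\mathrm{conv}(M_S\cup M_Z)$ in the unique-decomposition sense supplied by Lemma~\ref{Lclaimaff}. Without condition~(C), the leakage of trace mass between $S$ and $Z$ would prevent the defect $pc_n$ from vanishing in $I_{_{\Tw,\varpi}}$, and the whole surgery would collapse.
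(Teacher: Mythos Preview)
Your Step~2 has a genuine gap: Lemma~\ref{LIproj1} does not apply with $K=M_Z$ for $Z=\partial_e(T(A))\setminus S$. That lemma (through Lemma~\ref{LIunit} and Corollary~\ref{Laff}) requires $\overline{\mathrm{conv}(Z)}=M_Z$, and condition~(C) only guarantees this when the \emph{finite} piece is of the form $\cup_{i=1}^J X_{i+l}$ with $l\ge 1$, i.e.\ omits $X_1$; your $S=\cup_{j=1}^m X_j$ contains $X_1$, so $Z$ is not of the admissible shape. The obstruction is real, not merely formal: in Example~\ref{exm-1}(1) one has $Z=\{\tau_m,\tau_{m+1},\dots\}$ and $\tau_k\to\tfrac12(\tau^++\tau^-)\in M_{X_1}\subset M_S$, so for any fixed $e_n\in A_+^{\bf 1}$,
\[
\sup_{\tau\in M_Z}\tau(e_n^2)\ \ge\ \lim_{k\to\infty}\tau_k(e_n^2)\ =\ \tfrac12\bigl(\tau^+(e_n^2)+\tau^-(e_n^2)\bigr)\ \ge\ 1-\sup_{\tau\in M_S}\bigl(1-\tau(e_n^2)\bigr),
\]
and hence the two suprema cannot both be driven below $1/2$. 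The separating central projection $p$ you postulate therefore does not exist in the non-Bauer cases that condition~(C) is designed to cover. Your appeal to Proposition~\ref{Plimits} points the wrong way: that result says precisely that tails accumulate at $M_{X_1}$, which is the obstruction rather than a remedy.

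The paper's argument avoids splitting $S$ from all of $Z$ at once. After producing (via part~(2) of Lemma~\ref{LLlifting-2N}, which is where T-tracial oscillation zero enters) a projection $q\in\Ccq$ lifting $\phi(1_N)$ with representative $\{e_n\}$, it observes that the commutator defects $b_n^{(k)}=e_na_k-a_ke_n$ lie in $I_{S,\varpi}$. Lemma~\ref{LIJ} (the actual use of Proposition~\ref{Plimits}) then finds, for each $n$ along $\varpi$, a cutoff $m(n)$ making $b_n^{(k)}$ small also on the tail $Z_n=\cup_{i\ge m(n)}X_i$. One now applies Lemma~\ref{LIproj1} to the \emph{legitimate} decomposition $F_n\sqcup Y_n$ with $F_n=\cup_{i=m+1}^{m(n)-1}X_i$ (which \emph{is} of the form $\cup_{i=1}^J X_{i+l}$ with $l=m\ge 1$, so condition~(C) applies) to kill the defects on $M_{F_n}$; on $M_S$ and $M_{Z_n}$ they were already small. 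Kirchberg's $\varepsilon$-test assembles these $n$-dependent correctors into a single $\{c_n\}\in I_{S,\varpi}$, and $\psi$ is built from $(1-\bar c)q$. The moving finite window $F_n$, which never touches $X_1$, is the device that makes condition~(C) usable.
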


\begin{proof}
Using the fact that \CA\, $C_0((0, 1])\otimes M_N$ is projective, one obtains 
an order zero \cpc\, 
$\psi': M_N\to \Cqcc$ 
 such that
$\Phi_S\circ \psi'=\phi.$ 
Let $\{a_n\}\in l^\infty(A)\cap A'$ be such that $\Pi_{\varpi}(\{a_n\})=\psi'(1_N).$
It follows from Lemma \ref{LLlifting-2N}
that there is a  permanent projection lifting  $\{e_n\}\in l^\infty(A)_+^{\bf 1}$ 
of 
a projection $q\in \Ccq$ such that $\Phi_S(q)=\phi(1_N).$
 Moreover  it follows that $e_n\in \Her(f_{1/8}(a_n))_+^{\bf 1}.$
Since $\{e_n\}$ is a permanent projection lifting, by replacing $e_n$ by $e_n^{1/m(n)}$ for some 
$m(n)\in \N,$ we may assume that, for some $1/2>\dt_1>0,$ 
\beq
d_\tau(e_n)-f_{\dt_1}(e_n)<1/2^n\rforal \tau\in \Tw\andeqn n\in \N.
\eneq
Define $\psi_c': C_0((0,1])\otimes M_k\to \Cqcc$ by $\psi'_c(\imath\otimes e_{i,j})=\psi'(e_{i,j})$
($1\le i,j\le k$).
Define $\psi: M_k\to \Ccq$ by $\psi(e_{i,j})=q\psi_c'(f_{1/8}(\imath)\otimes e_{i,j})q.$ The proof of  Lemma \ref{LLlifting-2N}
shows that $\psi''$ is a \hm\,   and $\psi''(1_k)=\Pi_{\varpi}(\{e_n\}).$ 

We will  modify $\{e_n\}$ to obtain a central sequence.


Let $B$ be the separable \SCA\, of $l^\infty(A)$  generated by $\iota(A)$ and $\{e_n\}.$ 
Let $\{a_k\}$ be a dense sequence of $A^{\bf 1}.$ 
Put $b_n^{(k)}=e_na_k-a_ke_n.$   Since $q=\Pi_{_{S, \varpi}}(\{e_n\})$ and $\Phi_S(q)=\pi(1_N),$
for each $k\in \N,$ $\{b_n^{(k)}\}_{n\in \N}\in I_{_{S, \varpi}}.$
Fix $\ep\in (0,1/2)$ and $K\in \N.$
By Lemma \ref{LIJ}, there is ${\cal P}\in \varpi,$ satisfying the following:
for any $n\in {\cal P},$ there is $m(n)>m+1$ such that 
\beq\label{TLlifting-12}
\|b_n^{(k)}\|_{_{2, M_{Z_n}}}<\ep/4, \,\, 1\le k\le K,
\eneq
where $Z_n=\cup_{i=m(n)}^\infty X_i,$ $n\in \N.$
Put $F_n=\cup_{i=m+1}^{m(n)-1} X_i$  and $Y_n=\partial_e(T(A))\setminus F_n,$ ($n\in \N$).
We may also write $Y_n=S\sqcup Z_n,$ $n\in \N.$  Hence
$I_{_{M_{Y_n}, \varpi}}\subset I_{_{S, \varpi}}.$
Since $\overline{{\rm conv}(S)}=M_S,$ by condition (C),   we have $I_{_{S, \varpi}}=I_{_{M_S, \varpi}}.$
For $n\not\in {\cal P}$ and $n<\min\{l: l\in {\cal P}\},$ define $Z_{n}=X_{m+1}.$
If $n_p=\max\{l<n:l\in {\cal P}\},$ define $Z_n=Z_{n_p}.$ Then, for $n\not\in {\cal P},$
define $F_n$ and $Y_n$ accordingly.

By Lemma \ref{LIproj1}, there is $\{p_l^{(n)}\}_{l\in \N}\in I_{_{M_{Y_n}, \varpi}}$ such that 
$\Pi_\varpi(\{p_l^{(n)}\})$ is a projection and ($1\le k\le K$)
\beq
&&\lim_{l\to\varpi}\|a_kp_l^{(n)}-p_l^{(n)}a_k\|=0,\\\label{TLlifting-15}
&&\lim_{l\to\varpi}\sup\{1-\tau((p_l^{(n)})^2): \tau\in M_{F_n}\}=0\andeqn\\
&&\lim_{l\to\varpi}\|p_l^{(n)}-(p_l^{(n)})^2\|_{_{2, T(A)}}=0.
\eneq
Define $d_l^{(1,n,k)}=b_n^{(k)}p_l^{(n)}$ and $d_l^{(2,n,k)}=b_n^{(k)}(1-p_l^{(n)}),$ $l\in \N.$
Then $\{d_l^{(1,n,k)}\}_{l\in \N}\in I_{_{M_{Y_n}, \varpi}}$ and, by \eqref{TLlifting-15}, for 
$1\le k\le K,$ 
\beq\label{TLlifting-16} 
\lim_{l\to \varpi}\|d_l^{(2,n,k)}\|_{_{2, M_{F_n}}}\le 2\lim_{l\to\varpi}\sup\{\tau(1-p_l^{(n)}):\tau\in M_{F_n}\}=0.
\eneq
Note that $\{d_l^{(1,n,k)}\}_{l\in \N}\in I_{_{M_{Y_n},\varpi}}.$

By Lemma \ref{LIproj1},  for each fixed $n,$  there is $\{q_l^{(n)}\}_{l\in \N}\in  (I_{_{M_{Y_n}, \varpi}})_+^{\bf 1}$ 
such that, for $1\le k\le K,$ 
\beq\label{TLlifting-17}
&&\lim_{l\to\infty}\|q_l^{(n)}\|_{_{M_{Y_n},\varpi}}=0,\\\label{TLlifting-18}
&&\lim_{l\to\infty}\|[q_l^{(n)},\, a_k]\|=0,\,\,
\lim_{l\to\infty}\|[q_l^{(n)},\, e_n]\|=0,\\\label{TLlifting-19+}
&&\lim_{l\to\varpi}\|q_l^{(n)}-(q_l^{(n)})^2\|_{_{2, T(A)}}=0\andeqn\\\label{TLlifting-20}
&&\lim_{l\to \varpi}\|d_l^{(1,n,k)}q_l^{(n)}-d_l^{(1,n,k)}\|_{_{2, T(A)}}=0.
\eneq
We estimate that, if $n\in {\cal P},$  by \eqref{TLlifting-20} and \eqref{TLlifting-16}, for $1\le k\le K,$
\beq
&&\hspace{-1in}\lim_{l\to \varpi}\|b_n^{(k)}q_l^{(n)}-b_n^{(k)}\|_{_{2, M_{F_n}}}\le
\lim_{l\to\varpi}\|d_l^{(1,n,k)}q_l^{(n)}-d_l^{(1,n,k)}\|_{_{2,T(A)}}\\\label{Llifting-30}
&&\hspace{1in}+\lim_{l\to\varpi}\|d_l^{(2,n,k)}q_l^{(n)}-d_2^{(2,n,k)}\|_{_{2, M_{F_n}}}=0.
\eneq
Also, by \eqref{TLlifting-20} and \eqref{TLlifting-12}, if $n\in {\cal P},$ we have that, for $1\le k\le K,$
\beq
&&\hspace{-1in}\lim_{l\to \varpi}\|b_n^{(k)}q_l^{(n)}-b_n^{(k)}\|_{_{2, M_{Z_n}}}\le
\lim_{l\to\varpi}\|d_l^{(1,n,k)}q_l^{(n)}-d_l^{(1,n,k)}\|_{_{2,T(A)}}\\\label{Llifting-31}
&&\hspace{1in}+\lim_{l\to\varpi}\|d_l^{(2,n,k)}q_l^{(n)}-d_2^{(2,n,k)}\|_{_{2, M_{Z_n}}}<\ep/4.
\eneq
Since $b_n^{(k)}\in I_{_{S, \varpi}},$    we also assume that, for all $n\in {\cal P},$ 
\beq\label{Llifting-32}
\|b_n^{(k)}\|_{_{2, S}}<\ep/4, \,\,{\rm whence}\,\,\|b_n^{(k)}q_l^{(n)}-b_n^{(k)}\|_{_{2,, S}}<\ep/2.
\eneq
Note that $T(A)$ is the convex hull of $S, M_{F_n}$ and $M_{Z_n},$ by condition (C). 
We obtain, combining \eqref{Llifting-30}, \eqref{Llifting-31} and \eqref{Llifting-32},
that, for any $n\in {\cal P},$
\beq\label{TLift60}
\lim_{l\to \varpi}\|b_n^{(k)}q_l^{(n)}-b_n^{(k)}\|_{_{2, T(A)}}<\ep/2.
\eneq
Thus, combining with \eqref{TLlifting-17},  \eqref{TLlifting-18},  \eqref{TLlifting-19+}, and  \eqref{TLift60},
for each $n\in {\cal P},$ there exists $l(n)\in \N$ such that, 
for $1\le k\le K,$ 
\beq\label{TLlifting-25}
&& \|q_{l(n)}^{(n)}\|_{_{2, S}}<1/n^2,\,\, \|[q_{l(n)}^{(n)},\, a_k]\|<1/n^2\andeqn \|[q_{l(n)}^{(n)},\, e_n]\|<1/n^2, \\\label{TLlifting-26}
&&\|q_{l(n)}^{(n)}-(q_{l(n)}^{(n)})^2\|_{_{2, T(A)}}<1/n^2 \andeqn
\|b_n^{(k)}q_{l(n)}^{(n)}-b_n^{(k)}\|_{_{2, T(A)}}<\ep/2.
\eneq

We will apply Kirchberg's $\ep$-test.
Let $W_n=A_+^{\bf 1}$  (as $X_n$ in Lemma 3.1 of \cite{KR}), $n\in \N.$
Define 
\beq\nonumber
f_n^{(1)}(x)&=&\|x\|_{_{2, S}} \,\,\,{\rm{and,\,\,\, for}}\,\, k\in  N, \\\nonumber
\hspace{-0,2in}f_n^{(k+1)}&=&\|xa_k-a_kx\|+\|xe_n-e_nx\|+\|(1-x)b_n^{(k)}(1-x)\|_{_{2, T(A)}}+\|x-x^2\|_{_{2, T(A)}}.
\eneq
By \eqref{TLlifting-25} and \eqref{TLlifting-26}, for any $\ep>0$ and any $1\le k\le K,$  choosing $q_{l(n)}^{(n)}\in W_n,$ $n\in \N,$ 
as above,  we have that
\beq
\lim_{n\to\varpi}f_n^{(k)}(q_{l(n)}^{(n)})<\ep.
\eneq
Applying Kirchberg's $\ep$-test (see Lemma 3.1 of \cite{KR}), we obtain $c=\{c_n\}\in l^\infty(A)_+^{\bf 1}$ such 
that
\beq
\lim_{n\to\varpi}f_n^{(k)}(c_n)=0\rforal k\in \N.
\eneq
Put ${\bar c}=\Pi_\varpi(\{c_n\}).$ 
Then (since $\lim_{n\to\varpi} f_n^{(1)}(c_n)=0$) we have that  $\{c_n\}\in I_{_{S, \varpi}}$
and  ${\bar c}=\Pi_\varpi(\{c_n\})$ is a projection (as $\lim_{n\to \varpi}\|c_n-c_n^2\|_{_{2, T(A)}}=0$).
Since $\{a_k\}$ is dense in $A,$  we conclude that
$\{c_n\}\in A'$  (since $\lim_{n\to\varpi}\|c_na_k-a_kc_n\|=0$)  and 
\beq\label{6665}
&&{\bar c}\psi''(1_N)=\psi''(1_N){\bar c} \,\,\,\,\,\,\,\,\,({\rm since}\,\,\lim_{n\to\varpi}\|[c_n,e_n]\|=0),\\\label{6666}
&&\hspace{-0.4in}(1-{\bar c})\psi''(1_N)(1-{\bar c})\Pi_\varpi(\iota(a))=\Pi_\varpi(\iota(a))(1-{\bar c}))\psi''(1_N)(1-{\bar c})\rforal a\in A
\eneq
(for the last equality, recall that $b_n^{(k)}=e_na_k-a_ke_n$ 
and $\lim_{n\to\varpi}\|(1-c_n)b_n^{(k)}(1-c_n)\|_{_{2, T(A)}}=0$).

Let ${\bar e}_n=(1-c_n)e_n(1-c_n),$ $n\in \N,$ and 
 ${\bar q}=\Pi_\varpi(\{\bar e_n\}).$   By \eqref{6665}, $q\bar c={\bar c}q.$ Since both $q$ and ${\bar c}$ 
 are projections, so is ${\bar q}.$  We also have 
 ${\bar q}\le q.$ 
 Moreover, $\Pi_\varpi(\{\bar e_n\})=(1-{\bar c})\psi''(1_N)(1-{\bar c})$
and $\{{\bar e_n}\}\in \Pi_\varpi(\iota(A))'$ (by \eqref{6666}). 
  Define $\psi: M_N\to \Ccq$ by 
$\psi(e_{i,j})={\bar q}\psi_c'(f_{1/8}(\imath)\otimes e_{i,j}){\bar q}$ $(1\le i,j\le N$). 
It is an order zero \cpc.
Then (recall that  ${\bar q}$ is a projection and ${\bar q}\le q$), as in the first 
paragraph of this proof,  ${\bar \psi}(1_N)={\bar q}.$ Since $\{\bar e_n\}\in \Pi_\varpi(\iota(A))'$ 
and ${\bar q}$ is a projection, 
$\psi$ is a \hm.  Recall that  $\psi_c'$ maps into $\Cqcc$ and $\{\bar e_n\}\in \Pi_\varpi(\iota(A))',$
 the map $\psi$ is actually a \hm\, from $M_N$ into $(l^\infty(A)/I_{_{T(A),\varpi}})\cap \Pi_\varpi(\iota(A))'.$ 
 By Central Surjectivity  (see  Lemma 1.2 of \cite{CETW}, also Proposition 4.5 (iii) and Proposition 4.6 of \cite{KR}),
 $(l^\infty(A)/I_{_{T(A),\varpi}})\cap \Pi_\varpi(\iota(A))'=\Cqcc.$
 Therefore $\psi$ is a \hm\, from $M_N$  to $\Cqcc.$
 Since $\{c_n\}\in I_{_{S,\varpi}},$ we have
$\Phi_S\circ\psi=\phi.$  The lemma then follows.
\end{proof}

\begin{df}\label{DOS}
Let $A$ be a separable simple \CA\, with $A={\rm Ped}(A),$   $S\subset \Tw$ be a compact subset 
and $\phi: M_k\to l^\infty(A)\cap A'/I_{_{S, \varpi}}$ be an order zero  \cpc. 
We say that $\phi$ has  property (Os), if there exists 
an order zero \cpc\, $\Psi=\{\wtd\psi_n\}: M_k\to l^\infty(A)\cap A'$ 
such that
$\{\wtd\psi_n(1_k)\}$ is 
a permanent projection lifting of $\psi(1_k),$
and, for any $\ep>0$  there exist  $\dt>0$  and 
${\cal Q}\in \varpi$
such that
\beq
d_\tau(\wtd \psi_n(1_k))-\tau(f_\dt(\wtd\psi_n(1_k)))<\ep\tforal \tau\in \Tw 
\tand  n\in {\cal Q}.
\eneq
\end{df}

\begin{prop}\label{LLliftos}
Let $A$ be a separable  non-elementary simple \CA\, with $A={\rm Ped}(A)$ 
and $T(A)\not=\emptyset.$
Suppose that $\phi: M_k\to (l^\infty(A)\cap A')/I_{_{\Tw, \varpi}}$
is a \hm.
Then $\phi$ has  property 
(Os).
%
\end{prop}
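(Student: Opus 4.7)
The plan is to lift $\phi$ to an order zero \cpc\ map via projectivity of $C_0((0,1])\otimes M_k$, then apply the order zero functional calculus with $f_\dt$ to force the lift of $1_k$ to be a permanent projection lifting, and finally to read off property (Os) directly from Proposition \ref{Dpproj}. Notably, no T-tracial approximate oscillation hypothesis on $A$ is needed here, because the projection $\phi(1_k)$ already lives in the quotient by $I_{_{\Tw,\varpi}}$, which controls all traces in $\Tw$ uniformly.

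Concretely, I would first extend $\phi$ to a \hm\ $\phi_c:C_0((0,1])\otimes M_k\to (l^\infty(A)\cap A')/I_{_{\Tw,\varpi}}$ by $\phi_c(f\otimes e_{ij}):=f(1)\phi(e_{ij})$, and then use projectivity of the cone $C_0((0,1])\otimes M_k$ to obtain a \hm\ $\Phi:C_0((0,1])\otimes M_k\to l^\infty(A)\cap A'$ with $\Pi_\varpi\circ\Phi=\phi_c$. Fix $\dt\in(0,1/2)$ and define an order zero \cpc\ $\Psi=\{\wtd\psi_n\}:M_k\to l^\infty(A)\cap A'$ by $\Psi(e_{ij}):=\Phi(f_\dt(\iota)\otimes e_{ij})$. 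Writing $a_n:=\Phi_n(\iota\otimes 1_k)$, we then have $\wtd\psi_n(1_k)=f_\dt(a_n)$, and since $f_\dt(1)=1$ one computes
\beq\nonumber
\Pi_\varpi\circ\Psi(e_{ij})=\phi_c(f_\dt(\iota)\otimes e_{ij})=f_\dt(1)\phi(e_{ij})=\phi(e_{ij}),
\eneq
so $\Pi_\varpi\circ\Psi=\phi$.

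The key point is that $\Pi_\varpi(\{a_n\})=\phi(1_k)$ is a projection, so by part (1) of Proposition \ref{Dpproj} the family $\{f_\dt(a_n)\}=\{\wtd\psi_n(1_k)\}$ is automatically a permanent projection lifting of $\phi(1_k)$; this step does not require any approximate oscillation hypothesis. Then the implication (i)$\Rightarrow$(iii) of part (2) of Proposition \ref{Dpproj}, combined with the standing assumption that all 2-quasitraces are traces so that $\Qw=\Tw$, yields for any fixed $\dt'\in(0,1/2)$
\beq\nonumber
\lim_{n\to\varpi}\sup\{d_\tau(\wtd\psi_n(1_k))-\tau(f_{\dt'}(\wtd\psi_n(1_k))):\tau\in\Tw\}=0.
\eneq
Given $\ep>0$, this immediately supplies ${\cal Q}\in\varpi$ such that $d_\tau(\wtd\psi_n(1_k))-\tau(f_{\dt'}(\wtd\psi_n(1_k)))<\ep$ for all $\tau\in\Tw$ and $n\in{\cal Q}$, which is precisely property (Os). The only bookkeeping is verifying that $\Psi$ is indeed an order zero \cpc\ map into $l^\infty(A)\cap A'$ lifting $\phi,$ which is immediate from $\Phi$ being a \hm\ and from $f_\dt$ fixing the projection $\phi(1_k)$; I do not anticipate a serious obstacle, since all the substantive analytic work has already been packaged into Proposition \ref{Dpproj}.
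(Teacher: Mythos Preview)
Your proof is correct and follows essentially the same route as the paper: extend $\phi$ to the cone, lift via (semi)projectivity of $C_0((0,1])\otimes M_k$ into $l^\infty(A)\cap A'$, apply the order zero functional calculus with $f_\dt$ (the paper takes $\dt=1/4$), and invoke Proposition~\ref{Dpproj}(1) and (2)(i)$\Rightarrow$(iii) to obtain the permanent projection lifting and property~(Os). The only cosmetic differences are your explicit unpacking of the (i)$\Rightarrow$(iii) step and your remark that no T-tracial approximate oscillation hypothesis is needed, both of which are accurate.
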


\begin{proof}
Let $\psi_c: C_0((0,1])\otimes M_k\to \Cqcc$ be defined 
by $\psi_c(g\otimes e_{i,j})=g(1)\psi(e_{i,j})$ for all $g\in C_0((0,1])$ and $1\le i,j\le k.$
Since $C_0((0,1])\otimes M_k$ is projective, there 
is a \hm\, $\Psi_c=\{\wtd\psi_{c,n}\}: C_0((0,1])\otimes M_k\to l^\infty(A)\cap A'$ 
such that 
$\Pi_\varpi\circ \Psi_c=\psi_c.$
Note that\\
 $\Pi_\varpi(\{\wtd\psi_{c,n}(\imath\otimes 1_k)\})=\psi(1_k).$
Define $\Psi=\{\wtd\psi_n\}: M_k\to l^\infty(A)\cap A'$ 
by $\wtd\psi_n(e_{i,j})=\wtd\psi_{c,n}(f_{1/4}(\imath)\otimes e_{i,j})$ for $1\le i,j\le k.$
Then $\Psi$ is an order zero \cpc\,
and 
$\Pi_\varpi
\circ \Psi=\phi.$ Moreover 
$\{\wtd\psi_n(1_k)\}=\{f_{1/4}(\wtd\psi_{c,n}(1_k))\}$  is a permanent projection 
lifting of $\phi(1_k)$ (see Proposition \ref{Dpproj}).   The proposition then follows from Proposition \ref{Dpproj}.
\end{proof}

\begin{cor}\label{CCLLliftos}
Let $A$ be a separable  non-elementary simple \CA\, with nonempty compact $T(A)$
satisfying  condition (C) such that $\partial_e(T(A))=\cup_{n=1}^\infty X_n$  satisfies  (1) and (2) 
in \ref{Dc1}. 
 Suppose that $A$ has tracial approximate oscillation zero and 
suppose that $\phi: M_k\to (l^\infty(A)\cap A')/I_{_{\cup_{i=1}^n X_i, \varpi}}$ (for some $n\in \N$)
is a \hm.
Then $\phi$ has  property 
(Os).
%
\end{cor}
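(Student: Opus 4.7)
The plan is to reduce the corollary to the already-proved Proposition \ref{LLliftos} by first pushing $\phi$ further up to the quotient by $I_{_{T(A),\varpi}}$ via Theorem \ref{TLlifting}. Set $S:=\cup_{i=1}^n X_i$, which is a compact subset of $T(A)$ by condition (C) in \ref{Dc1}. The hypotheses of Theorem \ref{TLlifting} are in force (separable non-elementary simple \CA, compact $T(A)$ with condition (C), T-tracial approximate oscillation zero, plus a \hm\ $\phi:M_k\to(l^\infty(A)\cap A')/I_{_{S,\varpi}}$), so the theorem delivers a \hm
\[
\psi:M_k\to (l^\infty(A)\cap A')/I_{_{T(A),\varpi}}=\Cqcc
\]
with $\Phi_S\circ\psi=\phi$.

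Next I would apply Proposition \ref{LLliftos} to $\psi$. Since $A={\rm Ped}(A)$ (compactness of $T(A)$ together with simplicity put us in the compact case, see \ref{DefOS1}), the proposition produces an order zero \cpc\ $\Psi=\{\wtd\psi_n\}:M_k\to l^\infty(A)\cap A'$ such that $\{\wtd\psi_n(1_k)\}$ is a permanent projection lifting of the projection $\psi(1_k)\in\Cqcc$, and such that for every $\ep>0$ there exist $\dt>0$ and ${\cal Q}\in\varpi$ with
\[
d_\tau(\wtd\psi_n(1_k))-\tau(f_\dt(\wtd\psi_n(1_k)))<\ep\tforal\tau\in\Tw\tand n\in{\cal Q}.
\]
This is precisely the data that appears in Definition \ref{DOS}.

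Finally I would verify that $\Psi$ witnesses property (Os) for the original $\phi$. The oscillation estimate depends only on the sequence $\{\wtd\psi_n(1_k)\}$ (measured against traces in $\Tw$), so it is identical whether one views the lift as covering $\psi$ or $\phi$. The permanent projection lifting condition is phrased in Definition \ref{DOS} with respect to the intermediate projection in $\Cqcc$, and the identity $\Phi_S\circ\psi=\phi$ together with the fact that $\Phi_S$ sends projections to projections ensures compatibility with $\phi(1_k)\in(l^\infty(A)\cap A')/I_{_{S,\varpi}}$.

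I do not expect any serious obstacle: the argument is essentially a two-step composition, with all the heavy lifting already done inside Theorem \ref{TLlifting} (where condition (C), Lemma \ref{LIunit}, Lemma \ref{LIproj1}, Lemma \ref{LIJ} and Kirchberg's $\ep$-test combine to straighten the lift into a central sequence mod $I_{_{T(A),\varpi}}$) and inside Proposition \ref{LLliftos} (where semiprojectivity of $C_0((0,1])\otimes M_k$ together with Proposition \ref{Dpproj} yields the permanent projection lifting with small oscillation). The only point requiring mild care is bookkeeping the identification $\psi(1_k)\mapsto \phi(1_k)$ under $\Phi_S$ so that property (Os) as formulated in Definition \ref{DOS} is correctly inherited from $\psi$ to $\phi$.
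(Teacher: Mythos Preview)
Your proposal is correct and follows essentially the same approach as the paper's proof: apply Theorem \ref{TLlifting} to lift $\phi$ to a \hm\ $\psi:M_k\to\Cqcc$, then invoke Proposition \ref{LLliftos}. The paper's proof is just these two sentences without the surrounding commentary.
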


\begin{proof}
Since $A$ has tracial approximate oscillation zero, by  Theorem \ref{TLlifting}, 
there is a \hm\, $\phi: M_k\to \Cqcc$ such that $\Phi_S\circ \psi=\phi.$ 
Then the corollary follows from Proposition \ref{LLliftos}.
\end{proof}

\section{Sums of order zero maps}

We fix a free ultrafilter $\varpi\in \bt(\N)\setminus \N.$

\begin{lem}\label{Lortho}
Let $A$ be a simple \CA\, with compact $T(A)\not=\emptyset.$
Suppose that $a,b\in A_+^{\bf 1}\setminus \{0\}$ such 
that 
\beq
\tau(a)>1-\ep
\eneq
for some $\tau\in T(A)$ and $\ep\in (0,1/2).$
Then 
\beq
\tau(bab)>\tau(b^2)-\sqrt{\ep}.
\eneq
\end{lem}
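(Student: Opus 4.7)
The plan is to reduce the inequality to a direct application of the Cauchy--Schwarz inequality for the trace. Since $A$ need not be unital, I would first pass to the minimal unitization $\wtd A$ and extend $\tau$ to a tracial state $\wtd\tau$ on $\wtd A$ with $\wtd\tau(1)=1$. Then $1-a\in \wtd A$ makes sense, with $\wtd\tau(1-a)=1-\tau(a)<\ep$. Writing the difference to be estimated as
$$\tau(b^2)-\tau(bab)=\wtd\tau\bigl(b(1-a)b\bigr),$$
observe that $b(1-a)b=b^2-bab$ actually lies in $A$, and the expression equals $\wtd\tau\bigl((b(1-a)^{1/2})^*(b(1-a)^{1/2})\bigr)\ge 0$, so the difference is automatically non-negative and only an upper bound of $\sqrt{\ep}$ is required.

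Next, I would use the trace property (cyclicity) to rewrite this as
$$\wtd\tau\bigl(b(1-a)b\bigr)=\wtd\tau\bigl(b^2(1-a)\bigr),$$
and apply the Cauchy--Schwarz inequality $|\wtd\tau(xy)|\le \wtd\tau(x^*x)^{1/2}\wtd\tau(y^*y)^{1/2}$ with the self-adjoint choices $x=b^2$ and $y=1-a$, yielding
$$\wtd\tau\bigl(b^2(1-a)\bigr)\le \wtd\tau(b^4)^{1/2}\,\wtd\tau\bigl((1-a)^2\bigr)^{1/2}.$$

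Finally I would estimate the two factors separately using $0\le b\le 1$ and $0\le 1-a\le 1$: from $b^4\le b^2\le 1$ one gets $\wtd\tau(b^4)\le 1$, and from $(1-a)^2\le 1-a$ one gets
$$\wtd\tau((1-a)^2)\le \wtd\tau(1-a)=1-\tau(a)<\ep.$$
Multiplying these gives $\tau(b^2)-\tau(bab)\le \sqrt{\ep}$, which is even slightly stronger than what is claimed. No significant obstacle is anticipated; the only minor bookkeeping point is the passage to $\wtd A$ to give meaning to $1-a$, and neither the simplicity of $A$ nor the compactness of $T(A)$ is needed for this particular inequality.
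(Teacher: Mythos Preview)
Your proof is correct and essentially the same as the paper's: both reduce $\tau(b^2)-\tau(bab)=\tau(b(1-a)b)$ and bound it via the Cauchy--Schwarz inequality for traces, together with $0\le b\le 1$ and $\tau(1-a)<\ep$. The only cosmetic difference is the choice of factorization: the paper takes $x=b$, $y=(1-a)b$ to get $\tau(b(1-a)b)^2\le \tau(b^2)\tau((1-a)b^2(1-a))<\ep$, while you first cycle to $\tau(b^2(1-a))$ and take $x=b^2$, $y=1-a$; both give the same $\sqrt{\ep}$ bound, and your remark that simplicity and compactness of $T(A)$ play no role here is accurate.
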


\begin{proof}
We have 
\beq
\tau(b^2-bab)^2&=&\tau(b(1-a)b)^2\le \tau(b^2)\tau((1-a)b^2(1-a))\\
&<&\tau(b^2)\tau(1-a)=\tau(b^2)(1-\tau(a))<\ep.
\eneq
It follows that
\beq
\tau(bab)=\tau(b^2)-\tau(b^2-bab)>\tau(b^2)-\sqrt{\ep}
\eneq
as desired.
\end{proof}

\begin{lem}\label{LLnmaps}
Let $A$ be a separable amenable algebraically simple \CA\, with 
nonempty compact $T(A)$
and let $k\in \N.$
Let $\{S_n\}\subset 
\partial_e(T(A))$ be an increasing sequence of compact subsets.
%
Suppose that, for each $n,$
there exists a unital \hm\, 
 $\psi_n: M_k\to (l^\infty(A)\cap A')/I_{_{S_n, \varpi}}$ which has property (Os).

Then, for any $\ep>0$ and any finite subset 
${\cal F}\subset A,$ there exists a sequence 
of order zero \cpc s $\phi_n: M_k\to A$ such that
\beq 
&&\|[\phi_n(b), \, a]\|<\sum_{j=1}^n \ep/2^{i+2}\tforal  a\in {\cal F}\tand b\in M_k^{\bf 1},\\
&&\tau(\phi_n(1_k))>1-(\ep/2^{n+5})^2\tforal \tau\in \cup_{j=1}^n S_j,\\
\eneq
and, if $\tau(\phi_n(1_k))>1-(\sigma/2)^2
$ for some $\tau\in T(A)$ and $\sigma\in (0,1/2),$   then 
\beq
\tau(\phi_{n+1}(1_k))>1-(\ep/2^{n+4})^2-\sigma/2
\tforal n\in \N.
\eneq
\end{lem}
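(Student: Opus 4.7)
The plan is to construct the sequence $\{\phi_n\}$ inductively, at each stage cutting down a lift of $\psi_{n+1}$ by the orthogonal complement of the support of $\phi_n$ and adding the result to $\phi_n$. The two key inputs are property (Os), which furnishes a permanent projection lifting of each $\psi_n(1_k)$ with controllably small tracial oscillation on $\Tw$, and the unitality of $\psi_n$ on $(l^\infty(A)\cap A')/I_{_{S_n,\varpi}}$, which forces $\tau_\varpi(\psi_n(1_k))=1$ for every $\tau\in S_n.$

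For the base case, apply property (Os) to $\psi_1$ to obtain an order zero c.p.c.\ lift $\Psi_1=\{\widetilde\psi_1^{(m)}\}\colon M_k\to l^\infty(A)\cap A'$ such that $\{\widetilde\psi_1^{(m)}(1_k)\}$ is a permanent projection lifting of $\psi_1(1_k)$ and, for any $\eta>0$, there exist $\dt>0$ and $\mathcal{Q}\in\varpi$ with $d_\tau(\widetilde\psi_1^{(m)}(1_k))-\tau(f_\dt(\widetilde\psi_1^{(m)}(1_k)))<\eta$ for all $\tau\in\Tw$ and $m\in\mathcal{Q}$. Select $m_1\in\mathcal{Q}$ large enough that $\widetilde\psi_1^{(m_1)}$ approximately commutes with $\mathcal{F}$ to within $\ep/8$ and $\tau(\widetilde\psi_1^{(m_1)}(1_k))>1-(\ep/2^{6})^2$ for all $\tau\in S_1$ (possible by unitality on the quotient); set $\phi_1:=\widetilde\psi_1^{(m_1)}$.

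For the inductive step, suppose $\phi_n$ has been built. Pick $\eta_n>0$ small, set $\widetilde a_n=f_{\eta_n}(\phi_n(1_k))$ and $c_n=(1-\widetilde a_n)^{1/2}$; then $c_n\phi_n(b)\approx 0$ for every $b\in M_k$. Apply (Os) to $\psi_{n+1}$ to get a lift $\Psi_{n+1}=\{\widetilde\psi_{n+1}^{(m)}\}$ with the same oscillation control, and define
\[
\phi_{n+1}(b) \;:=\; \phi_n(b) \;+\; c_n\,\widetilde\psi_{n+1}^{(m)}(b)\,c_n
\]
for an appropriately selected index $m$. Orthogonality of $c_n$ to $\phi_n$ (in the functional-calculus sense for order zero maps) guarantees that $\phi_{n+1}$ is order zero c.p.c.\ with $\phi_{n+1}(1_k)\le 1$. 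For $\tau\in S_{n+1}$, unitality of $\psi_{n+1}$ yields $\lim_{m\to\varpi}\tau(\widetilde\psi_{n+1}^{(m)}(1_k))=1$, and Lemma \ref{Lortho} applied with $a=\widetilde\psi_{n+1}^{(m)}(1_k)$ and $b=c_n$ gives $\tau(c_n\widetilde\psi_{n+1}^{(m)}(1_k)c_n)>\tau(c_n^2)-\sqrt{\eta}$ for $m$ in an appropriate member of $\varpi$; combined with $\tau(\phi_n(1_k))+\tau(c_n^2)$ being close to $1$ (by functional calculus on $\widetilde a_n$), this yields $\tau(\phi_{n+1}(1_k))>1-(\ep/2^{n+6})^2$. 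The commutator bound $\|[\phi_{n+1}(b),a]\|<\sum_{j=1}^{n+1}\ep/2^{j+2}$ comes from selecting $m$ inside the ultrafilter member that witnesses centrality of $\Psi_{n+1}$, together with the fact that $c_n$ lies in the bicommutant of $\phi_n(M_k)$ so it too approximately commutes with $\mathcal{F}.$

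The main obstacle appears to be the inductive improvement clause, but in fact it is automatic from the construction. Since $c_n\widetilde\psi_{n+1}^{(m)}(1_k)c_n\ge 0$, one has $\phi_{n+1}(1_k)\ge \phi_n(1_k)$, so $\tau(\phi_{n+1}(1_k))\ge\tau(\phi_n(1_k))$ for every $\tau\in T(A)$. If $\tau(\phi_n(1_k))>1-(\sigma/2)^2$ for some $\sigma\in(0,1/2)$, then $(\sigma/2)^2<\sigma/2$ (as $\sigma<2$) gives
\[
\tau(\phi_{n+1}(1_k))\;>\;1-\sigma/2\;>\;1-(\ep/2^{n+4})^2-\sigma/2.
\]
Thus the only genuine work lies in the careful coordination of the small parameters $\eta_n$, the oscillation tolerance $\dt_n$ from property (Os), and the ultrafilter index $m$ at each stage, which amounts to intersecting finitely many members of $\varpi$ so that oscillation control, centrality with respect to $\mathcal{F}$, and the tracial estimates on $\cup_{j=1}^{n+1}S_j$ all hold simultaneously.
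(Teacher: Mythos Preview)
Your construction reverses the roles of the old and new maps compared with the paper, and this reversal creates a genuine gap in the $S_{n+1}$ tracial estimate. You form the cut-down $c_n=(1-f_{\eta_n}(\phi_n(1_k)))^{1/2}$ from the \emph{old} map $\phi_n$ and sandwich the \emph{new} lift by $c_n$. To obtain $\tau(\phi_{n+1}(1_k))>1-(\ep/2^{n+6})^2$ for $\tau\in S_{n+1}$ you then need $\tau(\phi_n(1_k))+\tau(c_n^2)$ close to $1$, i.e.\ $\tau(f_{\eta_n}(\phi_n(1_k)))-\tau(\phi_n(1_k))$ small uniformly over $\tau\in T(A)$. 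That is precisely a tracial oscillation bound on $\phi_n(1_k)$, and nothing in your induction supplies it: property~(Os) is a hypothesis on the given $\psi_n$'s, not on the $\phi_n$'s you are building. For $\tau\in S_{n+1}\setminus S_n$ there is no a~priori control on the spectral distribution of $\phi_n(1_k)$ with respect to $\tau$; if, say, $\phi_n(1_k)$ carries mass near $1/2$ under $\tau$, then $f_{\eta_n}(\phi_n(1_k))$ is close to a full projection there while $\phi_n(1_k)$ is not, so $\tau(c_n^2)\approx 0$ and your estimate collapses to roughly $\tau(\phi_n(1_k))$.

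The paper avoids this by sandwiching the other way around: it takes $a_{n+1}=f_{\dt_{n+1}}(\psi_{n+1}^{(m_{n+1})}(1_k))$ and $a_{n+1}^\perp=(1-f_{\dt_{n+1}/2}(\psi_{n+1}^{(m_{n+1})}(1_k)))^{1/2}$ from the \emph{new} lift, and sets $L_{n+1}(b)=a_{n+1}^\perp\phi_n(b)a_{n+1}^\perp+a_{n+1}\psi_{n+1}^{(m_{n+1})}(b)a_{n+1}$. Now (i) $a_{n+1}\perp a_{n+1}^\perp$ exactly and both commute exactly with $\psi_{n+1}^{(m_{n+1})}(b)$; (ii) property~(Os) on $\psi_{n+1}$ gives the required complementarity $\tau((a_{n+1}^\perp)^2+a_{n+1}\psi_{n+1}^{(m_{n+1})}(1_k)a_{n+1})>1-(\ep/2^{n+8})^2$ for all $\tau$; and (iii) the $S_{n+1}$ estimate comes from the new summand \emph{alone}, independently of what $\phi_n$ did. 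A secondary gap in your argument is that even with your choice of $c_n$, the map $\phi_{n+1}=\phi_n+c_n\widetilde\psi_{n+1}^{(m)}(\cdot)c_n$ is only approximately order zero and need not satisfy $\phi_{n+1}(1_k)\le 1$ (since $\phi_n(1_k)+c_n^2=1+\phi_n(1_k)-f_{\eta_n}(\phi_n(1_k))$ can exceed $1$); the paper handles the analogous issue by invoking semiprojectivity of $C_0((0,1])\otimes M_k$ to perturb $L_{n+1}$ to an honest order zero $\phi_{n+1}$, which in turn means your ``automatic'' monotonicity argument from $\phi_{n+1}(1_k)\ge\phi_n(1_k)$ does not survive the necessary perturbation.
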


\begin{proof}
Since $C_0((0,1])\otimes M_k$ is projective, 
for any $\ep>0,$ there is a universal $\eta_n>0,$ for each $n\in \N$ satisfying the following:
for any  \cpc\, $L: M_k\to B$ (for any \CA\, $B$) such that
\beq\label{Ltwomaps-5}
\|L(a)L(b)\|<\eta_n
\eneq
for any pair of mutually orthogonal elements $a, b\in (M_k)_+^{\bf 1},$ 
there exists an order zero map $\Phi: M_k\to B$ such 
that
\beq
\|L-\Phi\|<(\ep/2^{2(n+2)})^2
\eneq
(recall the unit ball of $M_k$ is compact). 
\Wlog, we may assume that $\eta_n<\ep/2^{3(n+1)}$.

We will construct $\{\phi_n\}$ by induction.  

For $n=1,$  
Let 
$\Psi_1=\{\psi_1^{(m)}\}:M_k\to l^\infty(A)\cap A'$  be an order zero \cpc\, such 
 that $\Pi_\varpi\circ \Psi_1=\psi_1.$
 Since $\psi_1:M_k\to (l^\infty(A)\cap A')/I_{S_1, \varpi}$ is unital, there exists ${\cal P}_1\in \varpi$
 such that, for any $m\in {\cal P}_1,$ 
 \beq
 s(\psi_1^{(m)}(1_k))>1-(\ep/2^{2+5})^2\rforal s\in S_1.
 \eneq
 Therefore we obtain
 $m_1\in \N$ such that
 \beq\label{nmaps-1-1}
&& \|[\psi_1^{(m_1)}(b),\, a]\|<\eta_1\rforal a\in {\cal F}\andeqn b\in M_k^{\bf 1}\andeqn\\\label{nmaps-1-2}
 &&s(\phi_1^{(m_1)}(1_k))>1-(\ep/2^{1+5})^2\rforal s\in S_1.
 \eneq
Put $\phi_1=\psi_1^{(m_1)}$  and ${\cal F}_1={\cal F}\cup \phi_1(M_k^{\bf 1}).$

Let $\Psi_2=\{\psi_2^{(m)}\}: M_k\to l^\infty(A)\cap A'$ 
be an order zero \cpc\, such that $\Pi_\varpi\circ \Psi_2=\psi_2.$
We assume, since $\psi_2$ has property (Os), there is $\dt_2\in (0,1/4)$ and 
${\cal P}_2\in \varpi$ such that, for all $m\in {\cal P}_2,$
\beq\label{Lnmaps-4}
\sup\{d_\tau(\psi_2^{(m)}(1_k))-\tau(f_{\dt_2}(\psi_2^{(m)}(1_k))):\tau\in T(A)\}<(\ep/2^{2+8})^2.
\eneq
By (1) of Proposition \ref{Dpproj}, we may assume that, for all $m\in {\cal P}_2,$
\beq\label{Lnmaps-5}
\sup\{\tau(\psi_2^{(m)}(1_k))-\tau(f_{\dt_2}(\psi_2^{(m)}(1_k))\psi_2^{(m)}(1_k)):\tau\in T(A)\}<(\ep/2^{2+8})^2.
\eneq
\Wlog\, (recall that $\psi_2: M_k\to l^\infty(A)\cap A'/I_{_{S_2, \varpi}}$ is unital), we may choose $m_2\in {\cal P}_2$  such  that (recall that $\phi_1((M_k)^{\bf 1})$ is compact)
\beq\label{nmaps-6}
&& s(\psi_2^{(m_2)}(1_k))>1-(\ep/2^{2+8})^2\rforal s\in S_2,\\\label{nmaps-7}
&&\|[\psi_2^{(m_2)}(b),\, a]\|<\eta_2\rforal a\in {\cal F}_1\andeqn b\in M_k^{\bf 1}, \\\label{nmaps-8}
&&\|[f_{\dt_2}(\psi_2^{(m_2)}(1_k)),\, a]\|<\eta_2 \rforal a\in {\cal F}_1
\andeqn\\\label{nmaps-9}
&&\|[(1-f_{\dt_2/2}(\psi_2^{(m_2)}(1_k)))^{1/2}, \, a]\|<\eta_2  \rforal a\in {\cal F}_1.
\eneq
Put 
\beq
a_2=f_{\dt_2}(\psi_2^{(m_2)}(1_k))\andeqn a_2^\perp=(1-f_{\dt_2/2}(\psi_2^{(m_2)}(1_k)))^{1/2}.
\eneq
Let $H_2: C_0((0,1])\otimes M_k\to A$ be a \hm\,  such that
$H_2(\imath\otimes e_{i,j})=\psi_2^{(m_2)}(e_{i,j})$ ($1\le i,j\le k$).
Then, for any $\dt\in (0,1/2),$ 
$f_\dt(\psi_2^{(m_2)}(1_k))=H_2(f_\dt(\imath)\otimes 1_k).$
It follows that $a_2$ and $a_2^\perp$ commutes with elements 
of the form $\psi_2^{(m_2)}(b)$ for $b\in M_k$ (this will be used for \eqref{Lnmaps-13++}).
Define
$L_2:M_k\to A$ by 
\beq\nonumber
L_2(b)&=&a_2^\perp\phi_1(b)a_2^\perp+
a_2\psi_2^{(m_2)}(b)a_2\rforal b\in M_k.
\eneq
Then $L_2$ is a \cpc. 

By  \eqref{Lnmaps-5}, we compute that, for all $\tau\in T(A),$
\beq\nonumber
&&\hspace{-0.32in}\sup\{\tau(\psi_2^{(m_2)}(1_k)-f_{\dt_2}(\psi_2^{(m_2)}(1_k))\psi_2^{(m_2)}(1_k)f_{\dt_2}(\psi_2^{(m_2)}(1_k))):\tau\in T(A)\}\\\nonumber
&&\le \sup\{\tau(\psi_2^{(m_2)}(1_k)-f_{\dt_2}(\psi_2^{(m_2)}(1_k))\psi_2^{(m_2)}(1_k)):
\tau\in T(A)\}+\\\nonumber
 &&\hspace{0.2in}\sup\{\tau(f_{\dt_2}(\psi_2^{(m_2)}(1_k))\psi_2^{(m_2)}(1_k))-\tau(f_{\dt_2}(\psi_2^{(m_2)}(1_k))\psi_2^{(m_2)}(1_k)f_\dt(\psi_2^{(m_2)}(1_k))):
 \tau\in T(A)\}\\\nonumber
 &&<(\ep/2^{2+8})^2+\|f_{\dt_2}(\psi_2^{(m_2)}(1_k))\|\sup\{\tau(\psi_2^{(m_2)}(1_k)-\psi_2^{(m_2)}(1_k)f_{\dt_2}(\psi_2^{(m_2)}(1_k))):
 \tau\in T(A)\}\\\label{Lnmaps-10-1}
 &&<(\ep/2^{2+8})^2+(\ep/2^{2+8})^2<(\ep/2^{2+7})^2.
\eneq
Similarly (using \eqref{Lnmaps-4}),  for all $\tau\in T(A),$
\beq\label{Lnmaps-10m}
&&\hspace{-0.75in}\sup\{\tau(f_{\dt_2}(\psi_2^{(m_2)}(1_k))-f_{\dt_2}(\psi_2^{(m_2)}(1_k))\psi_2^{(m_2)}(1_k)f_\dt(\psi_2^{(m_2)}(1_k)):\tau\in T(A)\}
<(\ep/2^{2+7})^2.
\eneq
Then, for $\tau\in S_2,$   by \eqref{Lnmaps-10-1} and \eqref{nmaps-6},  
\beq\label{Lnmaps-10}
\tau(L_2(1_k))>\tau(a_2\psi_2^{(m_2)}(1_k)a_2)
>\tau(\psi_2^{(m_2)}(1_k))-(\ep/2^{2+7})^2>1-(\ep/2^{2+6})^2.
\eneq
Moreover,  if, for some $\tau\in T(A),$  $\tau(\phi_1(1_k))>1-(\sigma/2)^2,$  then, by Lemma \ref{Lortho},
\eqref{Lnmaps-10m}
and \eqref{Lnmaps-4}, 
\beq\nonumber
\tau(L_2(1_k))&=&\tau(a_2^\perp \phi_1(1_k)a_2^\perp)+
\tau(a_2\psi_2^{(m_2)}(1_k)a_2)\\\nonumber
&>&\tau((a_2^\perp)^2)-\sigma/2+\tau(f_{\dt_2}(\phi_2^{(m_2)})\psi_2^{(m_2)}(1_k)f_{\dt_2}(\phi_2^{(m_2)}(1_k))\\\nonumber
&=&\tau((1-f_{\dt_2/2}(\psi_2^{(m_2)}(1_k))+f_{\dt_2}(\phi_2^{(m_2)}(1_k))\psi_2^{(m_2)}(1_k)f_{\dt_2}(\phi_2^{(m_2)}(1_k))))-\sigma/2\\\nonumber
&\ge& \tau(1-f_{\dt_2/2}(\psi_2^{(m_2)}(1_k))+f_{\dt_2}(\phi_2^{(m_2)}(1_k)))-(\ep/2^{2+7})^2-\sigma/2\\\label{nmaps-12}
&> &1-(\ep/2^{2+8})^2-(\ep/2^{2+7})^2-\sigma/2.
\eneq
For any $a\in {\cal F}$ and $b\in M_k^{\bf 1},$   by 
\eqref{nmaps-9},  \eqref{nmaps-8} and by \eqref{nmaps-1-1},
\beq\label{Lnmaps-13}
xL_2(b)\approx_{2\eta_2} a_2^\perp x \phi_1(b)a_2^\perp+
a_2x\psi_2^{(m_2)}(b)a_2
 \approx_{\eta_1+3\eta_2} L_2(b)x.
\eneq
Suppose that $b, c\in (M_k)^{\bf 1}_+$ such that $bc=cb=0.$
Then, since both $\phi_1$ and $\psi_2^{(m_2)}$ are order zero maps (see \eqref{nmaps-8}), 
\beq
L_2(b)L_2(c)&=&(a_2^\perp \phi_1(b)a_2^\perp+a_2\psi_2^{(m_2)}(b)a_2)(a_2^\perp \phi_1(c)a_2^\perp+a_2\psi_2^{(m_2)}(c)a_2)\\
&=&a_2^\perp \phi_1(b)a_2^\perp a_2^\perp \phi_1(c)a_2^\perp+a_2\psi_2^{(m_2)}(b)a_2^2\psi_2^{(m_2)}(c)a_2\\\label{Lnmaps-13++}
&\approx_{\eta_2}&(a_2^\perp)^3\phi_1(b)\phi_1(c)a_2^\perp=0.
\eneq
By the choice of $\eta_2,$ we obtain an order  zero \cpc\, $\phi_2: M_k\to A$ such 
that
\beq
\|L_2-\phi_2\|<(\ep/2^{2(2+2)})^2=(\ep/2^8)^2.
\eneq
By \eqref{Lnmaps-13}, 
\beq
\|[x,\, \phi_2(b)]\|<\eta_1+5\eta_2+ 2(\ep/2^8)^2<\ep/2^4 \rforal x\in {\cal F}\andeqn b\in M_k^{\bf 1}.
\eneq
Therefore, by \eqref{Lnmaps-10}, for  $s\in S_2,$ 
\beq
s(\phi_2(1_k))>1-(\ep/2^6)^2-(\ep/2^8)^2>1-(\ep/2^{2+5})^2.
\eneq
If $\tau\in T(A)$ such that $\tau(\phi_1(1_k))>1-(\sigma/2)^2,$ then, by \eqref{nmaps-12}, 
\beq
\tau(\phi_2(1_k))>1-(\ep/2^{2+8})^2-(\ep/2^{2+7})^2-\sigma/2-(\ep/2^8)^2>1-(\ep/2^{2+5})^2-\sigma/2.
\eneq
Suppose that order zero \cpc  s $\phi_1, \phi_2,...,\phi_n: M_k\to A$ have been 
constructed which meet the requirements of the lemma.

Let ${\cal F}_n={\cal F}\cup \{\phi_n(b): b\in M_k^{\bf 1}\}.$
Let $\Psi_{n+1}=\{\psi_{n+1}^{(m)}\}: M_k\to l^\infty(A)\cap A'$ 
be an order zero \cpc\, such that $\Pi_\varpi\circ \Psi_{n+1}=\psi_{n+1}.$
We assume, since $\psi_{n+1}$ has property (Os), there is $\dt_{n+1}\in (0,1/4)$ and 
${\cal P}_{n+1}\in \varpi$ such that, for all $m\in {\cal P}_{n+1},$
\beq\label{Lnmaps-n+1}
\sup\{d_\tau(\psi_{n+1}^{(m)}(1_k))-\tau(f_{\dt_{n+1}}(\psi_{n+1}^{(m)}(1_k))):\tau\in T(A)\}<({\ep\over{2^{n+1+8}}})^2.
\eneq
By (1) of Proposition \ref{Dpproj}, we may assume that
\beq\label{Lnmaps-n-5}
\sup\{\tau(\psi_{n+1}^{(m)}(1_k))-\tau(f_{\dt_{n+1}}(\psi_2^{(m)}(1_k))\psi_2^{(m)}(1_k)):\tau\in T(A)\}<({\ep\over{2^{n+1+8}}})^2.
\eneq
\Wlog\, (recall that $\psi_{n+1}: M_k\to l^\infty(A)\cap A'/I_{_{S_{n_1}, \varpi}}$ is unital), 
we may choose $m_{n+1}\in {\cal P}_{n+1}$  such
that
\beq\label{nmaps-n-6}
&& s(\psi_{n+1}^{(m_{n+1})}(1_k))>1-(\ep/2^{n+1+8})^2\rforal s\in S_{n+1},\\\label{nmaps-n-7}
&&\|[\psi_{n+1}^{(m_{n+1})}(b),\, a]\|<\eta_{n+1}\rforal a\in {\cal F}_n,\\\label{nmaps-n-8}
&&\|[f_{\dt_{n+1}}(\psi_{n+1}^{(m_{n+1})}(1_k)),\, a]\|<\eta_{n+1} \rforal a\in {\cal F}_n
\andeqn\\\label{nmaps-n-9}
&&\|[(1-f_{\dt_{n+1}/2}(\psi_{n+1}^{(m_{n+1})}(1_k)))^{1/2}, \, a]\|<\eta_{n+1}  \rforal a\in {\cal F}_n.
\eneq
Put 
\beq
a_{n+1}=f_{\dt_{n+1}}(\psi_{n+1}^{(m_{n+1})}(1_k))\andeqn a_{n+1}^\perp=(1-f_{\dt_{n+1}/2}(\psi_{n+1}^{(m_{n+1})}(1_k)))^{1/2}.
\eneq
Let $H_{n+1}: C_0((0,1])\otimes M_k\to A$ be a \hm\,  such that
$H_{n+1}(\imath\otimes e_{i,j})=\psi_2^{(m_2)}(e_{i,j})$ ($1\le i,j\le k$).
Then, for any $\dt\in (0,1/2),$ 
$f_\dt(\psi_2^{(m_2)}(1_k))=H_{n+1}(f_\dt(\imath)\otimes 1_k).$
It follows that $a_{n+1}$ and $a_{n+1}^\perp$ commutes with elements 
of the form $\psi_{n+1}^{(m_{n+1})}(b)$ for $b\in M_k.$
Define
$L_{n+1}:M_k\to A$ by 
\beq\nonumber
L_{n+1}(b)&=&a_{n+1}^\perp \phi_n(b)a_{n+1}^\perp+
a_{n+1}\psi_{n+1}^{(m_{n+1})}(b)a_{n+1}\rforal b\in M_k.
\eneq
Then $L_{n+1}$ is a \cpc. 
By  \eqref{Lnmaps-n-5}, we compute that, for all $\tau\in T(A),$
\beq\label{nmaps-nn-10}
&&\hspace{-0.3in}\sup\{\tau(\psi_{n+1}^{(m_{n+1})}(1_k))-a_{n+1}
\phi_{n+1}^{(m_{n+1})}(1_k) a_{n+1}
:\tau\in T(A)\}<({\ep\over{2^{n+1+7}}})^2\andeqn\\\label{nmaps-nn-11}
&&\hspace{-0.3in}\sup\{\tau(f_{\dt_{n+1}}(\psi_{n+1}^{(m_{n+1})}(1_k)))-a_{n+1}\psi_{n+1}^{(m_{n+1})}(1_k)a_{n+1}
:\tau\in T(A)\}<({\ep\over{2^{n+1+7}}})^2.
\eneq
It follows from  \eqref{nmaps-nn-11} that, for all $\tau\in T(A),$
\beq\label{Lnmaps-n-13}
\tau((a_{n+1}^\perp)^2+a_{n+1}\psi_{n+1}^{(m_{n+1})}(1_k)a_{n+1})>1
-({\ep\over{2^{n+1+7}}})^2.
\eneq
 For   $\tau\in S_{n+1},$ 
 by \eqref{nmaps-nn-10} and \eqref{nmaps-n-6},
\beq\label{Lnmaps-n-14}
&&\hspace{-0.6in}s(L_{n+1}(1_k))>s(a_{n+1}\psi_{n+1}^{(m_{n+1})}(1_k)a_{n+1})
>s(\psi_2^{(m_2)}(1_k))-({\ep\over{2^{n+7}}})^2>1-({\ep\over{2^{n+6}}})^2.
\eneq
Moreover,  if $\tau(\phi_n(1_k))>1-(\sigma/2)^2$ for some 
$\tau\in T(A),$   then, by Lemma \ref{Lortho}
and by \eqref{Lnmaps-n-13},
\beq\nonumber
\tau(L_{n+1}(1_k))&=&\tau(a_{n+1}^\perp \phi_n(1_k)a_{n+1}^\perp)+
\tau(a_{n+1}\psi_{n+1}^{(m_{n+1})}(1_k)a_{n+1})\\\nonumber
&>&\tau((a_{n+1}^\perp)^2)-\sigma/2+\tau(a_{n+1}\psi_2^{(m_{n+1})}(1_k)a_{n+1})\\\nonumber
&=&\tau((a_{n+1}^\perp)^2+a_{n+1}\psi_{n+1}^{(m_{n+1})}(1_k)a_{n+1})-\sigma/2\\\label{Lnmaps-nn}
&> &1-(\ep/2^{n+7})^2-\sigma/2.
\eneq
For any $a\in {\cal F}$ and $b\in M_k^{\bf 1},$    by 
\eqref{nmaps-n-9},  \eqref{nmaps-n-8} and by  inductive assumption,
\beq
xL_{n+1}(b)&\approx_{2\eta_{n+1}}& a_{n+1}^\perp x \phi_n(b)a_{n+1}^\perp+
a_{n+1}x\psi_{n+1}^{(m_{n+1})}(b)a_{n+1}\\\label{nmapsn-n100}
 &&\approx_{(\sum_{i=1}^n\ep/2^{i+2})+3\eta_{n+1}} L_{n+1}(b)x.
\eneq
Suppose that $b, c\in (M_k)^{\bf 1}_+$ such that $bc=cb=0.$
Then, since both $\phi_n$ and $\psi_{n+1}^{(m_2)}$ are order zero maps (see \eqref{nmaps-n-8}),
\beq\nonumber
&&\hspace{-0.5in}L_{n+1}(b)L_{n+1}(c)=\\\nonumber
&&(a_{n+1}^\perp \phi_n(b)a_{n+1}^\perp+a_{n+1}\psi_{n+1}^{(m_{n+1})}(b)a_{n+1})(a_{n+1}^\perp \phi_n(c)a_{n+1}^\perp+a_{n+1}\psi_{n+1}^{(m_{n+1})}(c)a_{n+1})\\\nonumber
&&=a_{n+1}^\perp \phi_n(b)a_{n+1}^\perp a_{n+1}^\perp \phi_n(c)a_{n+1}^\perp+a_{n+1}\psi_{n+1}^{(m_{n+1})}(b)a_{n+1}^2\psi_{n+1}^{(m_{n+1})}(c)a_{n+1}\\
&&\approx_{\eta_{n+1}}(a_{n+1}^\perp)^3\phi_n(b)\phi_n(c)a_{n+1}^\perp=0.
\eneq
By the choice of $\eta_{n+1},$ we obtain an order zero \cpc\, $\phi_{n+1}: M_k\to A$ such that
\beq
\|L_{n+1}-\phi_{n+1}\|<(\ep/2^{2(n+1+2)})^2.
\eneq
Then, by \eqref{nmapsn-n100}, for all $x\in {\cal F}$ and $b\in {\cal F},$ 
\beq
\|[x,\, \phi_{n+1}(b)]\|<\sum_{i=1}^n\ep/2^{i+2}+5\eta_{n+1}+2(\ep/2^{2(n+1+2)})^2<\sum_{i=1}^{n+1}\ep/2^{i+2}.
\eneq
Also, by \eqref{Lnmaps-n-14},  for $s\in S_{n+1},$ 
\beq
s(\phi_{n+1}(1_k))>1-({\ep\over{2^{n+6}}})^2-(\ep/2^{2(n+1+2)})^2>1-(\ep/2^{n+5})^2.
\eneq
If $\tau(\phi_n(1_k))>1-(\sigma/2)^2,$ then, by \eqref{Lnmaps-nn},
\beq
\tau(\phi_{n+1}(1_k))>1-(\ep/2^{n+7})^2-\sigma/2-(\ep/2^{2(n+1+2)})^2>1-(\ep/2^{n+1+4})^2-\sigma/2.
\eneq
This completes the induction  and the lemma follows.
\end{proof}

\section{The main results}
Let us first recall a result of W. Zhang:

\begin{lem}[W. Zhang, Lemma 6.5 of \cite{Zw}]\label{LwZ}
Let $d\ge 0$ and $k\ge 2$  be integers and let $A$ be a 
non-elementary separable simple 
unital amenable \CA\, with $T(A)\not=\emptyset.$  Then, for any compact subset 
$Y\subset \partial_e(T(A))$ with ${\rm dim}(Y)\le d,$ any finite subset 
${\cal F}\subset A$ and $\ep>0,$ there exists 
an order zero \cpc\, $\phi: M_k\to A$ such that
\beq
&&\|[\phi(b),\, y]\|<\ep\|b\|\tforal b\in  M_k,\,\, y\in {\cal F}\tand\\
&&\tau(\phi(1_k))>1-\ep\tforal \tau\in Y.
\eneq
\end{lem}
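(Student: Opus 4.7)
The plan is to reduce the construction to Matui--Sato-type statements at finitely many representative traces, using the finite covering dimension of $Y$ to organize these into a manageable global object via a coloring argument. First, I would fix a metric on $T(A)$ compatible with the weak-$*$ topology and, given $\ep,$ choose $\eta>0$ small (depending on the diameter of $\phi(1_k)\mapsto $ traces) so that for any $\tau_1,\tau_2\in T(A)$ with ${\rm dist}(\tau_1,\tau_2)<\eta$ and any $a\in A^{\bf 1},$ $|\tau_1(a)-\tau_2(a)|<\ep/4.$  Cover $Y$ by finitely many open sets of diameter $<\eta.$  Since $\dim Y\le d,$ we may refine to an open cover $\{U_1,\dots,U_N\}$ of multiplicity at most $d+1,$ and by a standard coloring/partition argument from dimension theory, split the index set $\{1,\dots,N\}$ into $d+1$ classes $C_0,\dots,C_d$ so that sets indexed within the same class are pairwise disjoint.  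Setting $Y_i=Y\cap \overline{\bigcup_{j\in C_i}U_j},$ each $Y_i$ is a disjoint union of finitely many compact pieces $Y_{i,1},\dots,Y_{i,r_i},$ and $Y=\bigcup_{i=0}^d Y_i.$

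Next, for each piece $Y_{i,j},$ pick a representative trace $\tau_{i,j}\in Y_{i,j}\subset \partial_e(T(A))$ and use the fact (as in the proof of Lemma \ref{LSato-2}) that $\pi_{\tau_{i,j}}(A)''$ is an injective II$_1$-factor, hence McDuff, to obtain a unital embedding of $M_k$ into the central sequence algebra of $\pi_{\tau_{i,j}}(A)''.$ Via Theorem \ref{TSR} (applied to the trace $\tau_{i,j}$) we lift this to a sequence of order zero c.p.c.\ maps $\phi_{i,j,n}:M_k\to A$ that asymptotically commute with ${\cal F}$ and satisfy $\tau_{i,j}(\phi_{i,j,n}(1_k))\to 1.$  By continuity of traces on $A$ and the small diameter of $Y_{i,j},$ we actually get $\liminf_n \tau(\phi_{i,j,n}(1_k))\ge 1-\ep/4$ uniformly over $\tau\in Y_{i,j}.$  Because the $Y_{i,j}$ ($1\le j\le r_i$) are pairwise disjoint compact subsets of $\partial_e T(A),$ condition (C)-style considerations (or directly, Urysohn on the Bauer portion given by a compact $\partial_e$-piece) let us separate them by approximately orthogonal cutoffs; this allows us to assemble the $\phi_{i,j,n}$ into a single order zero c.p.c.\ map $\phi_i:M_k\to A$ (for each color $i$) with $\tau(\phi_i(1_k))>1-\ep/3$ for all $\tau\in Y_i$ and $\|[\phi_i(b),y]\|$ arbitrarily small for $y\in{\cal F}.$

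Finally, I would splice the $d+1$ maps $\phi_0,\ldots,\phi_d$ into a single order zero $\phi:M_k\to A$ using the cutting-by-orthogonal-complement inductive scheme exactly as in the proof of Lemma \ref{LLnmaps}: define iteratively
\beq
L_0=\phi_0,\qquad L_{i+1}(b)=a_{i+1}^\perp L_i(b)a_{i+1}^\perp + a_{i+1}\phi_{i+1}(b)a_{i+1},
\eneq
where $a_{i+1}=f_{\dt}(\phi_{i+1}(1_k))$ and $a_{i+1}^\perp=(1-f_{\dt/2}(\phi_{i+1}(1_k)))^{1/2}$ for a suitably small $\dt>0,$ then correct the final $L_{d}$ by an $\ep/(d+2)$-close order zero approximation (using semiprojectivity of $C_0((0,1])\otimes M_k$).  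At each step, Lemma \ref{Lortho} ensures that if $\tau(L_i(1_k))$ is already close to $1,$ then $\tau(L_{i+1}(1_k))$ decreases by at most $O(\dt^{1/2}),$ while for traces in $Y_{i+1}$ the new $\phi_{i+1}$-contribution brings the value back above $1-\ep.$

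The main obstacle is this last splicing step: one must simultaneously preserve near-unitality of the cumulative map on the color classes already processed and import near-unitality on the new color class $Y_{i+1},$ while keeping $L_{i+1}(1_k)\le 1$ and control over the commutators with ${\cal F}.$  This is exactly the delicate balance carried out in Lemma \ref{LLnmaps}; the finite covering dimension $d$ is what bounds the number of splicing steps, and hence what keeps the cumulative error $(d+1)\cdot\ep/(d+2)<\ep$ under control.
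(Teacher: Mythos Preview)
Your overall architecture---cover $Y$, color by dimension, produce local order zero maps from the McDuff property of extremal GNS closures, then glue---is indeed the skeleton of the argument in \cite{TWW-2} and \cite{Zw}.  But the gluing mechanism you propose does not go through under the hypotheses of Lemma~\ref{LwZ}.

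The splicing scheme you borrow from Lemma~\ref{LLnmaps} relies on the maps $\psi_{i+1}$ having property~(Os) (see Definition~\ref{DOS} and the use of \eqref{Lnmaps-n+1}--\eqref{Lnmaps-n-5} in the proof of \ref{LLnmaps}).  Concretely, to ensure that $\tau(L_{i+1}(1_k))$ stays close to $1$ when $\tau(L_i(1_k))$ was, you need
\[
\tau\bigl((a_{i+1}^\perp)^2\bigr)+\tau\bigl(a_{i+1}\phi_{i+1}(1_k)a_{i+1}\bigr)
=\tau\bigl(1-f_{\dt/2}(\phi_{i+1}(1_k))+f_\dt(\phi_{i+1}(1_k))\phi_{i+1}(1_k)f_\dt(\phi_{i+1}(1_k))\bigr)
\]
to be close to $1$ for every $\tau\in T(A)$.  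The defect here is governed by the spectral mass of $\phi_{i+1}(1_k)$ in $[\dt/4,\dt]$, i.e.\ by $d_\tau(\phi_{i+1}(1_k))-\tau(f_\dt(\phi_{i+1}(1_k)))$, and without T-tracial approximate oscillation zero (which is \emph{not} assumed in Lemma~\ref{LwZ}) there is no $\dt$ making this uniformly small over $T(A)$.  Your claim that the loss is $O(\dt^{1/2})$ is unjustified.  Likewise, invoking ``condition (C)-style considerations'' in the within-color assembly is illegitimate: Lemma~\ref{LwZ} makes no structural assumption on $T(A)$ beyond $Y\subset\partial_e(T(A))$ being compact and finite-dimensional.

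The paper's route (following \cite{TWW-2} and \cite{Zw}) avoids this entirely: one takes a partition of unity $\{f_j^{(i)}\}$ subordinate to the colored cover of $Y$, extends each $f_j^{(i)}$ from the compact set $Y$ to an element of $\Aff(T(A))$ via Goodearl's Corollary~11.15 of \cite{kG}, realizes these affine functions as central sequences in $A$ (Cuntz--Pedersen), and uses them---rather than spectral cut-downs of the $\phi_i(1_k)$---to weight and sum the local order zero maps.  That extension step is precisely the point the paper singles out, and it is what your proposal is missing.
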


\begin{proof}
Since the  proof of this is not provided in \cite{Zw}, 
let us explain briefly.   As pointed out  in \cite{Zw}, the proof only 
depends on Lemma 6.4 of \cite{Zw} (whose proof is also omitted in \cite{Zw})  which is  referred to  Lemma 3.5 of \cite{TWW-2}. 
Note that 
 the finite open cover argument remains 
exactly the same as  that of the proof of Lemma 3.5 of \cite{TWW-2}
(replacing $\partial_e(T(A))$ by compact subset $Y,$ and  
open subsets $V_j^{(i)}$ by relatively open subsets of $Y$).
One point should be made  clearer is the extension of functions $f_j^{(i)}$ 
(part of the partition of unity) in the proof of Lemma 3.5 of \cite{TWW-2}. Indeed, one only defines 
$f_j^{(i)}$ on the compact subset $Y.$
Applying Corollary 11.15 of \cite{kG},
one may extend  $f_j^{(i)}$ to real affine continuous functions 
on $T(A).$   So the rest of the proof of Lemma 3.5 of \cite{TWW-2} 
remains the same (but restricting on $Y$ only).  
In other words, Lemma 6.4 of \cite{Zw}
follows. Consequently, as pointed out in \cite{Zw},  Lemma 6.5 of \cite{Zw} holds.
\end{proof}

\begin{rem}\label{RwZ}
Lemma \ref{LwZ}
may also be 
extracted from \cite{S-2} and
\cite{KR}.
In
\cite{TWW-2}  as well as in \cite{KR},   the condition that $T(A)$ is a Bauer simplex is required.
It is worth noting that Ken Goodearl's Corollary 11.15  of \cite{kG} plays a key role in the statements 
in \cite{Zw} which allows Wei Zhang to work on a compact subset of $\partial_e(T(A))$
instead of $\partial_e(T(A)).$   Next we will explain that W. Zhang's lemma also holds without 
assuming $A$ is unital. 
\end{rem}

Let us state the following lemma which is known in the unital case.

\begin{lem}[Lemma 4.2 of \cite{S-2}, Lemma 3.3 and 3.4 of \cite{TWW-2}]\label{LLsato}
Let $A$ be a separable simple amenable \CA\, with nonempty compact $T(A)$ and 
$F\subset \partial_e(T(A))$ be a compact subset.  
For any mutually orthogonal positive functions  $f_1, f_2,...,f_m$ in $C(F),$
there exist $\{a_{i,n}\}_{n\in\N}\in A',$ $1\le i\le m,$ such that
\beq
\lim_{n\to\infty}\max_{\tau\in F}|\tau(a_{i,n})-f_i(\tau)|=0\andeqn
\lim_{n\to\infty}\|a_{i,n}a_{j,n}\|=0,\,\, i\not=j, \,\,1\le j\le m.
\eneq
\end{lem}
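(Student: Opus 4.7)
The plan is to adapt the arguments of Sato (Lemma 4.2 of \cite{S-2}) and Toms--White--Winter (Lemmas 3.3 and 3.4 of \cite{TWW}) from the unital setting to the non-unital one, using Goodearl's Corollary 11.15 of \cite{kG} as the bridge and Theorem \ref{TSR} as the non-unital replacement for Sato's central surjectivity theorem.

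First I would extend each $f_i \in C(F)$ to a real affine continuous function $\tilde f_i$ on $T(A)$ by applying Corollary 11.15 of \cite{kG}. The extensions $\tilde f_i$ need not remain mutually orthogonal off $F$, but this is acceptable since the orthogonality condition is imposed only as an asymptotic norm condition on the operators $a_{i,n}$ and only the trace values on $F$ are required to match $f_i$. To handle the orthogonality issue, I would also fix pairwise disjoint open neighborhoods $U_i \subset T(A)$ of the supports ${\rm supp}(f_i) \cap F$ (possible since $F$ is compact Hausdorff and the $f_i$ are mutually orthogonal), and shrink the supports of the $\tilde f_i$ so that ${\rm supp}(\tilde f_i) \subset U_i$ while still satisfying $\tilde f_i|_F = f_i$.

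The heart of the construction is then local at each $\tau \in F$. Since $A$ is amenable and simple and $\tau$ is extremal, the GNS weak closure $N_\tau := \pi_\tau(A)''$ is an injective ${\rm II}_1$ factor, hence McDuff. By Theorem \ref{TSR}, one has $(l^\infty(A)\cap A')/I_{\tau,\varpi}(A) \cong N_\tau^\varpi \cap N_\tau'$, inside which mutually orthogonal positive contractions realizing any target tuple in $[0,1]^m$ with sum $\le 1$ can be built directly from the factor structure. Lifting these via Theorem \ref{TSR} produces, for each fixed $\tau$, mutually orthogonal $\{a_{i,n}^{(\tau)}\} \in l^\infty(A)\cap A'$ with $\lim_{n\to\varpi}\tau(a_{i,n}^{(\tau)}) = f_i(\tau)$ and $\lim_{n\to\infty}\|a_{i,n}^{(\tau)} a_{j,n}^{(\tau)}\| = 0$ for $i\ne j$. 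A finite open cover of $F$, a partition of unity subordinate to it (extended to $T(A)$ via \cite[Cor.~11.15]{kG}), and a Kirchberg $\epsilon$-test in the style of Lemma \ref{LIproj1} are then used to patch these local constructions into global sequences $\{a_{i,n}\} \in A'$ whose trace evaluations converge uniformly on $F$ to $f_i$ and whose pairwise products vanish in norm.

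The main obstacle is reconciling two simultaneous demands on the patched sequences: uniform (in $\tau \in F$) convergence of $\tau(a_{i,n})$ to $f_i(\tau)$ — which forces the partition-of-unity patching to respect the weak-$*$ topology on $T(A)$, where Goodearl's 11.15 is indispensable in the non-unital setting — together with the norm orthogonality $\|a_{i,n}a_{j,n}\| \to 0$, which can fail wherever the supports of the partition-of-unity functions overlap. This is resolved by choosing cut-off functions whose overlap regions can be absorbed into the $f_i$'s small values (using that $f_i f_j = 0$ on $F$ and continuity), and by a diagonal extraction via Kirchberg's $\epsilon$-test, which simultaneously enforces asymptotic centrality, asymptotic orthogonality, and asymptotic correctness of the trace values.
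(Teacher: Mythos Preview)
Your approach is in the right spirit but is considerably more elaborate than the paper's, and it reinvents machinery that the paper simply cites. The paper's proof is a direct reduction: extend each $f_i$ to $g_i \in \Aff(T(A))$ with $0 \le g_i \le 1$ via Goodearl's Theorem 11.14 of \cite{kG}; observe that Lemmas 3.3 and 3.4 of \cite{TWW} go through verbatim in the non-unital setting once $T(A)$ is compact; then build the sequences $\{e_n^{(i)}\}$ \emph{globally} and iteratively, using Proposition 3.1 of \cite{CETW} at each step to arrange $\max_{\tau\in F}|\tau(e_n^{(i)} e_n^{(j)})| \to 0$ for $j < i$; finally Lemma 3.4 of \cite{TWW} upgrades this tracial orthogonality to the required norm condition $\|a_{i,n} a_{j,n}\| \to 0$. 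No local construction at individual $\tau$, no partition of unity, and no $\epsilon$-test are needed here---those tools are reserved for the genuinely harder Lemma \ref{Csato}.

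Your route, by contrast, rebuilds the content of these lemmas from the McDuff-factor structure at each extreme trace and then adds a patching layer. That layer introduces a real cross-term problem: products of the form $a_i^{(\tau_k)} a_j^{(\tau_l)}$ with $k \ne l$ are not controlled by the local orthogonality, and where the partition-of-unity functions overlap these terms survive. Your proposed fix (absorbing overlap regions into the small values of the $f_i$) yields at best tracial smallness of such products, not the norm smallness the conclusion demands; one still needs something like Lemma 3.4 of \cite{TWW} at the end, at which point the global approach is shorter. Separately, the step ``shrink the supports of the $\tilde f_i$ while keeping them affine and equal to $f_i$ on $F$'' is ill-posed: affine functions on a simplex admit no such local support control.
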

\begin{proof}
First one observes that Lemma 3.3 and Lemma 3.4   in \cite{TWW-2} hold for  the case that $A$ is not necessarily unital 
but $T(A)$ is compact (see also the footnote on the page with Lemma \ref{LIunit}, also Lemma 4.1 of \cite{S-2}).
Then, 
by Theorem 11.14
of \cite{kG},  there are    $g_1, g_2,...,g_m\in \Aff(T(A))$  with $0\le g_i\le 1$ such that $(g_i)|_{F}=f_i,$
$1\le i\le m.$  By (non-unital version of) Lemma 3.3 of \cite{TWW} mentioned,  one obtains $\{e_{n}^{(1)}\}\in A',$ 
such that 
\beq
\lim_{n\to\infty}\max_{\tau\in T(A)}|\tau(e_n^{(1)})-g_1(\tau)|=0.
\eneq
Then, by  Proposition 3.1 of \cite{CETW} and (non-unital version of) Lemma 3.3 of \cite{TWW}, one obtains $\{e_{n}^{(2)}\}\in A',$ 
such that 
\beq
\lim_{n\to\infty}\max_{\tau\in T(A)}|\tau(e_n^{(2)})-g_2(\tau)|=0\andeqn \lim_{n\to \infty}\max_{\tau\in F}|\tau(e^{(1)}_ne^{(2)}_n)|=0.
\eneq
Repeating this, 
one obtains $\{e_{n}^{(i)}\}\in A',$ 
$1\le i\le m,$ 
such that 
\beq
\lim_{n\to\infty}\max_{\tau\in T(A)}|\tau(e_n^{(i)})-g_i(\tau)|=0\andeqn \lim_{n\to \infty}\max_{\tau\in F}|\tau(e^{(i)}_ne^{(j)}_n)|=0,
\,\,i\not=j.
\eneq
The lemma then follows by applying (non-unital version of) Lemma 3.4 of  in \cite{TWW} mentioned above.
\end{proof}

We would like to 
state the non-unital version of  W. Zhang's Lemma \ref{LwZ} as follows. 

\begin{lem}[Proposition 5.1 of \cite{S-2} and Lemma 6.5 of  \cite{Zw}]\label{Csato}
Let  $A$ be a non-elementary separable amenable  {{algebraically}} simple \CA\, 
with  compact $T(A)\not=\emptyset$  and let 
$S\subset \partial_e(T(A))$ be  a compact subset with covering dimension 
$d\in \Z_+.$  
Then, for any $k\in \N,$ 
there exists a unital  \hm\, $\phi: M_k\to (l^\infty(A)\cap A')/I_{_{S, \varpi}}.$ 
\end{lem}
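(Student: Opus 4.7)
The plan is to adapt the Sato--Zhang argument (Proposition 5.1 of \cite{S-2} combined with Lemma 6.5 of \cite{Zw}) to the present non-unital, compact-trace setting. The three ingredients that make the adaptation possible are already in place: (i) the non-unital Matui--Sato lemma \ref{LSato-2} producing, for each extremal $\tau$, an order zero c.p.c.\ lifting of a unital embedding of $M_k$ modulo $I_{\tau,\varpi}$; (ii) the non-unital partition-of-unity result \ref{LLsato}; and (iii) Lemma \ref{LIunit}, which gives the correct notion of ``unit'' in $(l^\infty(A)\cap A')/I_{S,\varpi}$ by producing a projection that is $1$ on $M_S$. The job is to glue (i)--(iii) together along a coloring of $S$ dictated by its covering dimension $d$.

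I would proceed by induction on $d$, mirroring the standard two-step construction. For the base case $d=0$, $S$ is totally disconnected, so for any $\eta>0$ I would partition $S$ into finitely many disjoint clopen subsets $S_1,\dots,S_m$ of small diameter, choose an extremal $\tau_j\in S_j$ for each, and apply Lemma \ref{LSato-2} to obtain order zero c.p.c.\ lifts $\Psi^{(j)}=\{\psi^{(j)}_n\}:M_k\to l^\infty(A)\cap A'$ whose classes in $(l^\infty(A)\cap A')/I_{\tau_j,\varpi}$ are unital $\ast$-homomorphisms. Applying Lemma \ref{LLsato} to the characteristic functions $\chi_{S_j}$ (extended to affine functions on $T(A)$ via Corollary 11.15 of \cite{kG}) yields pairwise almost orthogonal central sequences $\{e^{(j)}_n\}\in A'$ approximating $\chi_{S_j}$ on $S$. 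Then
\[
\Phi_n(x):=\sum_{j=1}^m (e^{(j)}_n)^{1/2}\,\psi^{(j)}_n(x)\,(e^{(j)}_n)^{1/2}
\]
descends to an order zero c.p.c.\ in the quotient, and by Lemma \ref{LIunit} its value on $1_k$ agrees, modulo $I_{S,\varpi}$, with the class of any approximate unit of $A$; since this class is a projection, $\Phi$ is in fact a unital $\ast$-homomorphism.

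For the inductive step I would invoke Ostrand's coloring theorem to refine a given open cover of $S$ into $d+1$ color classes $\mathcal{C}_0,\dots,\mathcal{C}_d$ whose sets in each class are pairwise disjoint. Applying the base-case construction to each color class produces order zero c.p.c.\ maps $\Phi^{(l)}:M_k\to (l^\infty(A)\cap A')/I_{S,\varpi}$ with $\sum_{l=0}^d \Phi^{(l)}(1_k)$ equal to the class of an approximate unit (the partition functions of all colors sum to $1$ on $S$). From $d+1$ pairwise commuting order zero c.p.c.\ maps with unital sum on $1_k$, one obtains a unital $\ast$-homomorphism $M_k\to (l^\infty(A)\cap A')/I_{S,\varpi}$ by the standard device of \cite{WZ} (functional calculus on each $\Phi^{(l)}$).

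The main obstacle is arranging the cross-class approximate commutation $[\Phi^{(l)},\Phi^{(l')}]\in I_{S,\varpi}$, since the lifts produced by Lemma \ref{LSato-2} commute only modulo $I_{\tau,\varpi}$ for a single extremal $\tau$, not uniformly over $S$. I expect to resolve this by a central-sequence/Kirchberg-$\varepsilon$-test argument of the same flavor as in the proof of Theorem \ref{TLlifting}: replacing each $\Phi^{(l)}$ by a conjugate $(1-c^{(l)})\Phi^{(l)}(\cdot)(1-c^{(l)})$ for suitable projections $c^{(l)}$ coming from quasi-central approximate units of the relevant ideals, so as to force the cross-class commutators into $I_{S,\varpi}$ without spoiling the unitality of $\sum_l \Phi^{(l)}(1_k)$ modulo $I_{S,\varpi}$. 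Once this is arranged, the rest of the argument is formal.
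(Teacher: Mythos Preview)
Your overall plan --- run Sato's induction on the covering dimension of $S$, using a partition of unity subordinate to a finite cover and gluing the local Matui--Sato maps --- is exactly the paper's approach. The paper's proof is simply: Sato's argument in \cite{S-2} goes through verbatim once Lemma 4.2 of \cite{S-2} (which needs $\partial_e(T(A))$ compact) is replaced by the second part of Proposition 3.1 of \cite{CETW} together with Lemma \ref{LLsato} (which need only compactness of $T(A)$ and of $S$). So the paper does not spell out the induction; it just identifies the one ingredient to swap.

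Where your sketch departs from this, it introduces problems. First, Lemma \ref{LIunit} requires condition (C), which is \emph{not} assumed in Lemma \ref{Csato}; the unitality of the quotient follows already from compactness of $T(A)$ (Dini on an approximate identity), so you should not cite \ref{LIunit} here. Second, and more seriously, there is no ``standard device of \cite{WZ}'' that turns $d+1$ pairwise \emph{commuting} order zero c.p.c.\ maps with $\sum_l\Phi^{(l)}(1_k)=1$ into a unital $\ast$-homomorphism: commutation of the ranges is not enough. In Sato's proof the maps $\phi_{l,n}$ satisfy additional intertwining relations (the conditions (i)$'$--(v)$'$ the paper refers to), and it is precisely these that make $\sum_l\phi_l$ an order zero map, hence a unital homomorphism once its value at $1_k$ is $1$. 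Your sketch does not produce these relations. Third, your proposed fix for cross-class commutation via the mechanism of Theorem \ref{TLlifting} again requires condition (C) and $T$-tracial approximate oscillation zero, neither of which is available here; Sato's original construction arranges the needed commutation directly from the McDuff structure at each extremal trace together with the partition of unity from Lemma \ref{LLsato}, with no extra hypotheses. In short: the strategy is right, but the combination step and the commutation step must be done exactly as in \cite{S-2}, not by the shortcuts you propose.
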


\begin{proof}
This is actually proved in Proposition 5.1 of \cite{S-2}.
We will use  the second part of Proposition 3.1 of \cite{CETW}  and Lemma \ref{LLsato}
to replace Lemma 4.2 of \cite{S-2}. 
It suffices to show that the conclusion of Proposition 5.1 of \cite{S-2} holds
but replacing $A_{t, \infty}$ (the notation used in Proposition 5.1 of \cite{S-2})
by $(l^\infty(A)\cap A')/_{I_{S, \N}}$ (choosing $\phi:=\sum_{l=1}^d\phi_l$). 
In other words, it suffices to show that there exist 
$\phi_{l,n}: M_k\to A$ which satisfies  (i)' (replacing $1_ {A_\infty}$ by
$1_{l^\infty(A)/I_{_{S, \N}}}$),  (ii)',  (iii)', (iv)' and (v)'  (stated in the proof of \cite{S-2}) hold
(for $B=S$). 

Now as in the proof of Proposition 5.1, let $B\subset \partial_e(T(A))$ be a compact
subset with covering dimension $c\le d.$  As 
in  the proof of Proposition 5.1 of \cite{S-2}, using induction on $d,$ it suffices to prove  that 
there exists a \cpc\, $\phi_{l,n}: M_k \to A,$ $l=0,1,2,...,c\, (\le d),$  $n\in \N$
such that 
(i)', (ii)',  (iii)', (iv)' and (v)'  (stated in the proof of \cite{S-2}) hold.
In fact the proof of Proposition 5.1 of \cite{S-2} does  just that
(with one obvious modification: (a-vii) holds when we replace $\partial_e(T(A))$ by $B$ as  we apply the second part of Proposition 3.1 of \cite{CETW} instead of part (i) of Lemma 4.2 
of \cite{S-2}
and Lemma \ref{LLsato} instead of part (ii) of Lemma 4.2 of \cite{S-2}).
\end{proof}

\begin{lem}\label{Lcover2}
Let $A$ be a separable  non-elementary
amenable  {{algebraically}} simple \CA\, with 
$T(A)\not=\emptyset$  which is compact
and with T-tracial approximate oscillation zero such that 
$\partial_e(T(A))$  satisfying condition (C). 
%
Then, 
for any integer $k\in\N,$ any $\ep>0$ and 
any finite subset ${\cal F}\subset A,$  there 
is an order zero \cpc\, $\phi: M_k\to A$ such that
\beq
\tau(\phi(1_k))>1-\ep\tforal \tau\in T(A)\tand \|[f, \, \phi(b)]\|<\ep
\eneq
for all $f\in {\cal F}$ and $b\in M_k^{\bf 1}.$
\end{lem}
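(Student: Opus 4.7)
The plan is to combine condition (C) with the machinery of Sections 3 and 4 — specifically Lemma \ref{LLnmaps} — together with a Choquet integration argument and the compactness of $T(A)$ to extract a single map from an infinite sequence.

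First, I would invoke condition (C) to write $\partial_e(T(A))=\bigcup_{n=1}^\infty X_n$ with the $X_n$ pairwise disjoint, compact, and of finite covering dimension, and set $S_n:=\bigcup_{i=1}^n X_i$. Each $S_n$ is then compact with finite covering dimension, so Lemma \ref{Csato} supplies a unital \hm\, $\psi_n:M_k\to (l^\infty(A)\cap A')/I_{_{S_n,\varpi}}$, and Corollary \ref{CCLLliftos} (using the T-tracial approximate oscillation zero hypothesis) upgrades each $\psi_n$ so that it has property (Os). The sequence $\{\psi_n\}$ is then precisely the input demanded by Lemma \ref{LLnmaps} with respect to the prescribed $\ep$ and ${\cal F}$, and produces order zero \cpc s $\phi_n:M_k\to A$ with commutators with ${\cal F}$ bounded by $\sum_{j=1}^n \ep/2^{j+2}<\ep$, with $\tau(\phi_n(1_k))>1-(\ep/2^{n+5})^2$ for every $\tau\in S_n$, and with the stated propagation clause linking step $n$ to step $n+1$.

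Next I would transfer the estimate from $S_n$ to all of $T(A)$ via Choquet integration. For each $\tau\in T(A)$ the unique maximal boundary measure $\mu_\tau$ is concentrated on $\partial_e(T(A))=\bigcup_n S_n$, so $\mu_\tau(S_n)\nearrow 1$. Choose $n_1=n_1(\tau)$ with $\mu_\tau(\partial_e(T(A))\setminus S_{n_1})<(\ep/8)^2$; then integrating the pointwise bound on $S_{n_1}\subset S_n$ against $\mu_\tau$ yields $\tau(\phi_n(1_k))>1-(\ep/4)^2$ for all $n\ge n_1$. Consequently the open sets $G_n:=\{\tau\in T(A):\tau(\phi_n(1_k))>1-(\ep/4)^2\}$ form an open cover of $T(A)$, and compactness of $T(A)$ gives $n_0$ with $T(A)\subset\bigcup_{n=1}^{n_0}G_n$.

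Finally, for any $\tau\in T(A)$ choose $j\le n_0$ with $\tau\in G_j$ and iterate the propagation clause of Lemma \ref{LLnmaps} from index $j$ up to $n_0$, bringing the lower bound forward to the single map $\phi_{n_0}$; the cumulative loss across the finite subcover is controlled by the geometric decay built into Lemma \ref{LLnmaps}, yielding $\tau(\phi_{n_0}(1_k))>1-\ep$ uniformly in $\tau\in T(A)$. The commutator bound at the single index $n_0$ is already $<\ep$, so $\phi:=\phi_{n_0}$ meets all requirements. The main obstacle is the bookkeeping in the propagation step: the per-step loss provided by Lemma \ref{LLnmaps} must compound into something uniformly summable across the finite subcover, and it is precisely to engineer this that Lemma \ref{LLnmaps} was stated with the geometric $\ep/2^{n+5}$ thresholds and the compatible propagation constants.
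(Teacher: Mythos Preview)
Your proposal is correct and follows essentially the same approach as the paper: invoke Lemma \ref{Csato} on each $S_n=\cup_{i=1}^n X_i$, use Corollary \ref{CCLLliftos} (equivalently Theorem \ref{TLlifting} plus Proposition \ref{LLliftos}) to secure property (Os), feed these into Lemma \ref{LLnmaps}, then use Choquet integration and compactness of $T(A)$ to extract a finite subcover and the propagation clause to land on a single $\phi_{n_0}$. The bookkeeping you flag in the propagation step is handled exactly as you describe, with the geometric constants in Lemma \ref{LLnmaps} designed so that the cumulative loss stays below $\ep$.
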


\begin{proof}
Since  $T(A)$ satisfies condition (C), 
we may write $\partial_e(T(A))=\cup_{n=1}^\infty X_n$
which satisfies conditions (1) and (2) in \ref{Dc1}.
Write $S_n=\cup_{i=1}^n X_i.$
Note that each $S_n$ has finite covering dimension and compact.
By Lemma \ref{Csato}, there exists, for each $n\in \N,$  a unital \hm\, 
${\bar \psi}: M_k\to (l^\infty(A)\cap A')/I_{_{S_n, \varpi}}.$ 
Since $A$ is assumed to have T-tracial approximate oscillation zero, by Lemma \ref{TLlifting},
there exists, for each $n\in\N,$  a \hm\, $\psi: M_k\to \Cqcc$  such that $\Phi_{S_n}\circ \psi={\bar \psi},$
where $\Phi_{S_n}: l^\infty(A)/_{_{T(A),\varpi}}\to l^\infty(A)/I_{_{S_n, \varpi}}$ is the quotient map,
and $\psi$ has property (Os). 

Fix $k\in \N,$ $\ep\in (0,1/2)$ and a finite subset ${\cal F}\subset A.$ 

Applying
Lemma \ref{LLnmaps}, we obtain 
a sequence of  order zero \cpc s  $\phi_n: M_k\to A$ 
which satisfies the conclusion of Lemma \ref{LLnmaps} (with respective to $\{S_n\},$ ${\cal F}$ and $\ep$). 

Put $G_n=\{\tau\in T(A): \tau(\phi_n(1_k))>1-(\ep/4)^2\},$ $n\in \N.$

Let $\tau\in T(A).$ Then, by The Choquet  Theorem, there is a probability  Borel measure 
$\mu_\tau$ on $\partial_e(T(A))$ 
such that
\beq
\tau(f)=\int_{\partial_e(T(A))} f d\mu_\tau\rforal f\in \Aff(T(A)).
\eneq
Let $\mu_{\tau, n}=\mu_\tau|_{X_n}.$ 
Recall that $X_i\cap X_j=\emptyset,$ if $i\not=j$ and $i,j\ge 2.$
Then 
\beq
\sum_{n=2}^\infty \int_{X_n} d\mu_{\tau, n}
\le \|\mu_\tau\|.
\eneq
Since $\|\mu_\tau\|=1,$ there is $n_1\in \N$ such that
\beq\label{Lcover-n3}
\sum_{m>n_1}^{\infty} \|\mu_{\tau, m}\|<(\ep/8)^2\andeqn  \mu_\tau(S_{n_1})>1-(\ep/8)^2.
\eneq
We may assume that $n_1>2.$
Then (as $\{\phi_n\}$ satisfies the conclusion of Lemma \ref{LLnmaps}), if $n\ge n_1,$ 
\beq\label{Lcover-4}
\tau(\phi_n(1_k))&\ge &\int_{S_{n_1}} \widehat{\phi_n(1_k)}(s)d\mu_\tau\\
&\ge & (1-(\ep/2^{n+5})^2)\mu_\tau(S_{n_1})  >1-(\ep/4)^2.
\eneq
In other words, $\tau\in G_n$ (for $n\ge n_1$). 
It follows that $\cup_{n=1}^\infty G_n\supset T(A).$ 
Since $T(A)$ is compact,  there exists $n_0\in \N$ such that
\beq
T(A)\subset \cup_{n=1}^{n_0}G_n.
\eneq
Let $\tau\in T(A).$ Suppose that $\tau\in G_j$ for some $j\le n_0.$
Then
\beq
\tau(\phi_j(1_k))>1-(\ep/4)^2.
\eneq
It follows from the conclusion of Lemma \ref{LLnmaps} that
\beq
\tau(\phi_{n_0}(1_k))>1-\sum_{i=j+1}^{n_0}\ep/2^{i+1+4}-\ep/4>1-\ep.
\eneq
Choose $\phi=\phi_{n_0}.$ Then
\beq
\hspace{-0.2in}\|[x,\, \phi(b)]\|<\ep\rforal x\in {\cal F}\andeqn b\in M_k^{\bf 1},  \andeqn
\tau(\phi(1_k))>1-\ep\rforal \tau\in T(A).
\eneq
The lemma follows.

\end{proof}

\begin{thm}\label{TM-1}
Let $A$ be a separable non-elementary amenable  simple \CA\, 
whose extremal boundaries of $\wtd{T}(A)\setminus \{0\}$ satisfies condition (C)
(see Definition \ref{DconC}).
%
Then  following are equivalent.

(1) $A$ has strict comparison and T-tracial approximate oscillation zero,

(2) $A$ has strict comparison and stable rank one,

(3) $A\cong A\otimes {\cal Z},$

(4) $A$ has finite nuclear dimension.

\end{thm}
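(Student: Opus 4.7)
The plan is to establish the cycle $(3) \Rightarrow (2) \Rightarrow (1) \Rightarrow (3)$, the equivalence $(3) \Leftrightarrow (4)$ already being available from the cited work of Castillejos-Evington, Winter and collaborators. The implication $(3) \Rightarrow (2)$ is immediate from R\o rdam's theorem that $\mathcal{Z}$-stable stably finite simple \CA s have stable rank one together with strict comparison. For $(2) \Rightarrow (1)$, I would observe that strict comparison plus stable rank one forces the canonical map $\Gamma$ to be surjective on purely non-compact elements (Antoine-Perera-Thiel), and then invoke the equivalence announced in the introduction: strict comparison plus stable rank one plus surjectivity of $\Gamma$ is equivalent to $T$-tracial approximate oscillation zero.

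The substantive content is $(1) \Rightarrow (3)$. First I would reduce to the algebraically simple case: using $T$-tracial approximate oscillation zero and strict comparison, pick a strictly positive element $e \in A_+^{\bf 1}$ of a corner with $\omega(e) = 0$, so that $B := \overline{eAe}$ has continuous scale (cf.\ \ref{DefOS1} and Theorem~5.3 of \cite{eglnp}). Brown's stable isomorphism theorem gives $B \otimes \mathcal{K} \cong A \otimes \mathcal{K}$, and Proposition \ref{Pextrb} tells us that condition (C) is inherited by the basis $T(B)$ of $\wtd T(B)$. Since $\mathcal{Z}$-stability is preserved under stable isomorphism for separable $\sigma$-unital \CA s, it suffices to show $B \cong B \otimes \mathcal{Z}$.

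Next, for each $k \ge 2$, finite $\mathcal{F} \subset B$, and $\epsilon > 0$, Lemma \ref{Lcover2} produces an order zero c.p.c.\ map $\phi: M_k \to B$ with $\tau(\phi(1_k)) > 1-\epsilon$ for all $\tau \in T(B)$ and approximately commuting with $\mathcal{F}$. Assembling these for $\mathcal{F}_n \nearrow B$ and $\epsilon_n \searrow 0$ gives an order zero c.p.c.\ map $\Phi: M_k \to (l^\infty(B) \cap B')/I_{_{T(B),\varpi}}$ with $\tau_\varpi(\Phi(1_k)) = 1$ for every $\tau \in T(B)$; by Corollary \ref{CCLLliftos} one can arrange $\Phi$ to satisfy property (Os), so that $\Phi(1_k)$ lifts to a projection and the support projection $p$ of $\Phi(1_k)$ in the quotient is an honest projection in the trace-kernel quotient.

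The hard step is to upgrade $\Phi$ to a \emph{unital} $*$-homomorphism $M_k \to (l^\infty(B) \cap B')/I_{_{T(B),\varpi}}$, from which the Matui-Sato/R\o rdam-Winter criterion yields $B \cong B \otimes \mathcal{Z}$. The obstruction is the tracially negligible defect $\mathbf{1} - p$: one must absorb an orthogonal copy of $M_k$ into the hereditary subalgebra it cuts out and iterate. Strict comparison is what provides the room to fit such an orthogonal copy inside the defect, while $T$-tracial approximate oscillation zero (through the real rank zero property of $l^\infty(B)/I_{_{\Tw,\varpi}}$ used in Section 3 and the permanent projection lifting technology of Proposition \ref{Dpproj}) controls the spectral defects that would otherwise block successive orthogonal sums. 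The main obstacle, and the reason condition (C) enters essentially, is that $\partial_e(T(B))$ is neither compact nor finite-dimensional; so the iterated orthogonal absorption must be carried out level-by-level along the decomposition $\partial_e(T(B)) = \bigcup_{n=1}^\infty X_n$ of \ref{Dc1}, exactly as engineered in Lemma \ref{LLnmaps}. The telescoping estimates $\tau(\phi_n(1_k)) > 1-(\epsilon/2^{n+5})^2$ on $\bigcup_{j\le n} X_j$ together with Proposition \ref{Plimits} (which forces accumulation points of measures supported far out in the tail to lie in $M_{X_1}$) ensure convergence in norm on the compact set $T(B)$, completing the verification of $\mathcal{Z}$-stability.
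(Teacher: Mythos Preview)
Your outline is largely on the right track---the reduction to continuous scale via Proposition~\ref{Pextrb} and Brown's theorem, the invocation of Lemma~\ref{Lcover2}, and the final appeal to a Matui--Sato type criterion are exactly what the paper does. But you have misplaced where the ``hard step'' lies, and this creates a phantom obstruction in your last two paragraphs.

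Once Lemma~\ref{Lcover2} hands you $\phi_n:M_k\to B$ with $\tau(\phi_n(1_k))>1-\epsilon_n$ for \emph{every} $\tau\in T(B)$, the assembled map $\Pi_\varpi\circ\Phi:M_k\to l^\infty(B)/I_{_{T(B),\varpi}}$ is already unital: the element $1-\Phi(1_k)$ has $\|1-\phi_n(1_k)\|_{2,T(B)}^2\le\sup_\tau\tau(1-\phi_n(1_k))<\epsilon_n\to 0$, so it lies in $I_{_{T(B),\varpi}}$ by definition. A unital order-zero c.p.c.\ map is a $*$-homomorphism, and you are done (citing \cite{KR} in the unital case, or the uniform McDuff result of \cite{CLZ} in the non-unital case, exactly as the paper does). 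There is no residual defect $1-p$ to absorb, no further need for Corollary~\ref{CCLLliftos} or property~(Os), and no further iteration.

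The iterated orthogonal absorption along $\partial_e(T(B))=\bigcup_n X_n$ that you describe---telescoping estimates from Lemma~\ref{LLnmaps}, tracial oscillation control, and so on---is precisely the \emph{proof} of Lemma~\ref{Lcover2}, not something that comes after it. Likewise, Proposition~\ref{Plimits} enters inside Theorem~\ref{TLlifting} (via Lemma~\ref{LIJ}), which is a prerequisite for Lemma~\ref{Lcover2}, not a closing argument. So your proposal is correct in its ingredients but scrambled in its logical structure: you have reproduced the internal mechanism of Lemma~\ref{Lcover2} as if it were a separate step still to be carried out.
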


\begin{proof}
We only need to prove (1) $\Rightarrow$ (3). 
The equivalence of (1) and (2) follows from Theorem 1.1 of \cite{FLosc}.
The equivalence of (3) and (4)   has been proved  (see  \cite{CE}, \cite{CETWW}, 
\cite{Winter-Z-stable-02}, \cite{T-0-Z}, \cite{MS2}) in general (without the assumption that $T(A)$ 
satisfies condition (C)). 
That 
(3)  implies that $A$ has strict comparison 
 is   proved in \cite{Rrzstable} in general. In fact, it is also proved that, in unital case, $A$ has stable rank one.
For non-unital case, that $A$ is ${\cal Z}$-stable implies that  $A$ has stable rank one 
is proved in \cite{FLL} (Corollary 6.8 of \cite{FLL}).  In other words, (3) $\Rightarrow$ (2) holds 
(without the assumption that $T(A)$ 
satisfies condition(C)).

To show (1) $\Rightarrow$ (3), let us assume that $A$ has strict comparison and T-tracial approximate oscillation zero.
 It follows from  Theorem  7.12  of \cite{FLosc} that  the canonical map 
$\Gamma: \Cu(A)\to {\rm LAff}_+({\wtd{T}}(A))$ is surjective.
Then, choose $a\in {\rm Ped}(A)_+^{\bf 1}\setminus \{0\}$ such that 
$d_\tau(a)$ is continuous on ${\wtd{T}}(A).$ Define $A_1=\Her(a).$
Then $A_1$ has continuous scale  and $T(A_1)$ is compact (see Proposition 5.4 of \cite{eglnp}). 
Since $A_1\otimes {\cal K}\cong A\otimes {\cal K}$ (\cite{Br}), 
by Cor. 3.1 of \cite{TWst},
 it suffices to show that $A_1$ is ${\cal Z}$-stable.
 It follows part (2) of Proposition \ref{Pextrb} that $T(A_1)$  satisfies condition (C).

Fix an integer $k\ge 2.$
Let $\{{\cal F}_n\}$ be an increasing sequence of finite subsets of $A_1$ such that 
$\cup_{n=1}^\infty {\cal F}_n$ is dense in $A_1.$ By Lemma \ref{Lcover2},
for each $n\in \N,$ there exists 
an order zero \cpc\, $\phi_n: M_k\to A_1$ such that
\beq\label{TM-1-1}
&&\|[a,\, \phi_n(b)]\|<1/n\rforal a\in {\cal F}_n\andeqn b\in M_k^{\bf 1}\andeqn\\\label{TM-1-2}
&&\sup\{\tau(\phi_n(1_k)): \tau\in T(A_1)\}>1-1/n,\,\,\,n\in \N.
\eneq
Define $\Phi: M_k\to l^\infty(A_1)$ by $\Phi(b)=\{\phi_n(b)\}.$ Then, by \eqref{TM-1-1},
$\Phi$ maps $M_k$ into $l^\infty(A_1)\cap(A_1)'.$  By 
\eqref{TM-1-2},
\beq
\lim_{n\to\infty}\sup\{1-\tau(\phi_n(1_k)): \tau\in T(A_1)\}=0.
\eneq
It follows that $\Pi_{\varpi}\circ \Phi$ is a unital order zero \cpc. Therefore it is a unital \hm.
Hence   (2) follows from a result of Matui-Sato (see, explicitly, 
  Corollary 5.11 and Proposition 5.12  of \cite{KR}, for example) in the unital case.
For non-unital case,   we  take a detour and use the result in \cite{CLZ}. In this case,
since $T(A_1)$ is compact, $A_1$ is uniformly McDuff (see  Definition 4.1 of \cite{CLZ},
or Definition 4.2 of \cite{CETW}). 
Since $A_1$ has strict comparison,  by Proposition 4.4 of \cite{CLZ} (also a  version of Matui-Sato's result),
we conclude that
$A_1\cong A_1\otimes {\cal Z}.$
\end{proof}
%
%
%
%
%
%
%
%
\begin{cor}\label{CCM-1}
Let $A$ be a separable non-elementary amenable  simple \CA\, 
with ${\wtd{T}}(A)\not=\emptyset$
which has a basis $S$   satisfying 
condition (C) and
$\partial_e(S)$ has countably many points.
Then  following are equivalent.

(1) $A$ has strict comparison,

(2) $A\cong A\otimes {\cal Z},$

(3) $A$ has finite nuclear dimension.

\end{cor}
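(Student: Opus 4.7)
The plan is to reduce Corollary \ref{CCM-1} directly to Theorem \ref{TM-1} by observing that, under the extra hypothesis that $\partial_e(S)$ is countable, the T-tracial approximate oscillation zero condition is automatic. Since condition (C) on the basis $S$ is part of the hypothesis of both statements, the only substantive difference is that the corollary replaces ``strict comparison and T-tracial approximate oscillation zero'' by ``strict comparison'' alone, and omits the stable-rank-one version entirely.

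For the easy directions I would note the following. The equivalence (2) $\Leftrightarrow$ (3) is the general equivalence of $\mathcal{Z}$-stability and finite nuclear dimension for separable non-elementary simple amenable \CA s, which requires no hypothesis on $T(A)$ and is already cited in the proof of Theorem \ref{TM-1} (\cite{CE}, \cite{CETWW}, \cite{Winter-Z-stable-02}, \cite{T-0-Z}, \cite{MS2}). The implication (2) $\Rightarrow$ (1) is R{\o}rdam's theorem \cite{Rrzstable}, likewise cited there. Neither direction depends on condition (C) or on countability of $\partial_e(S)$.

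The one direction needing the countability hypothesis is (1) $\Rightarrow$ (2). Here I would invoke the fact recalled in Definition \ref{defOs2}, namely Theorem 5.9 of \cite{FLosc}: if $A$ is a separable simple \CA\, with $\wtd{QT}(A) \neq \{0\}$ and the extremal boundary $\partial_e(S)$ of some basis $S$ for the cone $\wtd{QT}(A)$ has only countably many points, then $A$ has T-tracial approximate oscillation zero. Combined with the standing hypothesis that all 2-quasitraces of $A$ are traces, with strict comparison (hypothesis (1) of the corollary), and with condition (C) on $S,$ this places $A$ squarely within the hypotheses of Theorem \ref{TM-1}, from which $A \cong A \otimes \mathcal{Z}$ follows.

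I do not expect any real obstacle; the substantive content has already been developed in Theorem \ref{TM-1} and in \cite{FLosc}. The only minor point of bookkeeping to confirm is that the countability of $\partial_e(S)$ does not depend on the chosen basis $S,$ so that the hypothesis is well posed for the whole cone $\wtd{T}(A);$ this is handled by Proposition \ref{Pextrb}, whose homeomorphism $\gamma$ transports $\partial_e(S)$ bijectively onto $\partial_e(T_e)$ for any other choice of basis element.
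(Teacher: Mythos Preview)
Your proposal is correct and follows essentially the same approach as the paper: the paper's proof is a single sentence observing that $A$ has T-tracial approximate oscillation zero (citing \cite{FLosc}), which then reduces the corollary to Theorem \ref{TM-1}. Your version is more explicit about the remaining implications and about the well-posedness via Proposition \ref{Pextrb}, but the substance is identical; note only that the paper cites Theorem 1.1 of \cite{FLosc} rather than Theorem 5.9, though the fact used is exactly the one you identify from Definition \ref{defOs2}.
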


\begin{proof}
We note, by Theorem 1.1 of \cite{FLosc}, $A$ has T-tracial approximate oscillation zero. 

\end{proof}

\begin{rem}
In Theorem \ref{TM-1}, by (2) of Proposition \ref{Pextrb}, the condition on $\wtd{T}(A)$ does not depend on the choice of the basis
$S.$
If $A$ is unital, or $A$ has continuous scale, the condition 
can be stated as $T(A)$ 
satisfies condition (C)
(in particular, $T(A)$ may not  be a Bauer simplex and may not have finite covering dimension). 
Theorem \ref{TM-1} covers cases of simple \CA s $A$  whose 
$\partial_e(T(A))$ are as in (v) of  Remark \ref{Rmark1}, and  Examples (4), (5) and (6) in  \ref{exm-1} which have infinite dimension.   We also note that, by , there are Choquet simpleces 
without countable extremal boundaries which do not satisfy condition (C). 
%
One day after the first version of this  paper was posted, we were informed by Kang Li the paper of Wei Zhang \cite{Zw}
which contains statements overlapping with our original statement.
This helps us to improve our main results. 
It is, perhaps, conceivable, that the condition  that a basis $S$ of $\wtd{T}(A)$ 
satisfies condition (C) can be  further weakened in 
Theorem \ref{TM-1}. 
\end{rem}

 \providecommand{\href}[2]{#2}


\noindent 
Research Center for Operator Algebras\\
School of Mathematics\\
East China Normal University\\
Shanghai 200062, China\\
and (current) Department of Mathematics\\
University of Oregon\\
Eugene, Oregon 97402,
U.S.A.

\noindent
hlin@uoregon.edu

\end{document}